\newtheorem{theorem}{Theorem}[section]
\newtheorem{lemma}[theorem]{Lemma}
\newtheorem{proposition}[theorem]{Proposition}
\newtheorem{definition}[theorem]{Definition}
\newtheorem{conjecture}{Conjecture}
\newtheorem{cnstr}{Construction}
\newcounter{remark}[section]
   \newcounter{example}[section]
\newcommand{\RN}[1]{%
  \textup{\expandafter{\romannumeral#1}}%
}
\newcommand\remove[1]{}
\newcommand{\nc}{\newcommand}
\def\mathbi#1{{\textbf{\textit #1}}}
\nc\bfa{{\boldsymbol a}}\nc\bfA{{\boldsymbol A}}\nc\cA{{\mathscr A}}\nc\sA{{\mathscr A}}
\nc\bfb{{\boldsymbol b}}\nc\bfB{{\boldsymbol B}}\nc\cB{{\mathscr B}}\nc\sB{{\mathscr B}}
\nc\bfc{{\boldsymbol c}}\nc\bfC{{\boldsymbol C}}\nc\cC{{\mathscr C}}\nc\sC{{\mathscr C}}
\nc\bfd{{\boldsymbol d}}\nc\bfD{{\boldsymbol D}}\nc\cD{{\mathscr D}}
\nc\bfe{{\boldsymbol e}}\nc\bfE{{\boldsymbol E}}\nc\cE{{\mathscr E}}
\nc\bff{{\boldsymbol f}}\nc\bfF{{\boldsymbol F}}\nc\cF{{\mathscr F}}\nc\sF{{\mathscr F}}
\nc\bfg{{\boldsymbol g}}\nc\bfG{{\boldsymbol G}}\nc\cG{{\mathscr G}}
\nc\bfh{{\boldsymbol h}}\nc\bfH{{\boldsymbol H}}\nc\cH{{\mathscr H}}
\nc\bfi{{\boldsymbol i}}\nc\bfI{{\boldsymbol I}}\nc\cI{{\mathscr I}}\nc\sI{{\mathscr I}}
\nc\bfj{{\boldsymbol j}}\nc\bfJ{{\boldsymbol J}}\nc\cJ{{\mathscr J}}
\nc\bfk{{\boldsymbol k}}\nc\bfK{{\boldsymbol K}}\nc\cK{{\mathscr K}}
\nc\bfl{{\boldsymbol l}}\nc\bfL{{\boldsymbol L}}\nc\cL{{\mathscr L}}
\nc\bfm{{\boldsymbol m}}\nc\bfM{{\boldsymbol M}}\nc\cM{{\mathscr M}}
\nc\bfn{{\boldsymbol n}}\nc\bfN{{\boldsymbol N}}\nc\cN{{\mathcal N}}
\nc\bfo{{\boldsymbol o}}\nc\bfO{{\boldsymbol O}}\nc\cO{{\mathscr O}}
\nc\bfp{{\boldsymbol p}}\nc\bfP{{\boldsymbol P}}\nc\cP{{\mathscr P}}\nc\eP{{\EuScriptP}}\nc\fP{{\mathfrak P}}
\nc\bfq{{\boldsymbol q}}\nc\bfQ{{\boldsymbol Q}}\nc\cQ{{\mathscr Q}}
\nc\bfr{{\boldsymbol r}}\nc\bfR{{\boldsymbol R}}\nc\cR{{\mathscr R}}\nc\sR{{\mathscr R}}
\nc\bfs{{\boldsymbol s}}\nc\bfS{{\boldsymbol S}}\nc\cS{{\mathscr S}}
\nc\bft{{\boldsymbol t}}\nc\bfT{{\boldsymbol T}}\nc\cT{{\mathscr T}}
\nc\bfu{{\boldsymbol u}}\nc\bfU{{\boldsymbol U}}\nc\cU{{\mathscr U}}
\nc\bfv{{\boldsymbol v}}\nc\bfV{{\boldsymbol V}}\nc\cV{{\mathscr V}}\nc\sV{{\mathscr V}}
\nc\bfw{{\boldsymbol w}}\nc\bfW{{\boldsymbol W}}\nc\cW{{\mathscr W}}\nc\sW{{\mathscr W}}
\nc\bfx{{\boldsymbol x}}\nc\bfX{{\boldsymbol X}}\nc\cX{{\mathscr X}}
\nc\bfy{{\boldsymbol y}}\nc\bfY{{\boldsymbol Y}}\nc\cY{{\mathscr Y}}
\nc\bfz{{\boldsymbol z}}\nc\bfZ{{\boldsymbol Z}}\nc\cZ{{\mathscr Z}}
\DeclareMathOperator*{\argmin}{arg\!\min}
\DeclareMathOperator{\TV}{TV}
\DeclareMathOperator{\tr}{tr}
\DeclareMathOperator{\Var}{Var}
\DeclareMathOperator{\kl}{kl}
\begin{document}


\title{Asymptotically optimal private estimation under mean square loss}

\author{\IEEEauthorblockN{Min Ye} \hspace*{1in}
\and \IEEEauthorblockN{Alexander Barg}}

\maketitle
{\renewcommand{\thefootnote}{}\footnotetext{

\vspace{-.2in}
 
\noindent\rule{1.5in}{.4pt}

The authors are with Dept. of ECE and ISR, University of Maryland, College Park, MD 20742. Emails: yeemmi@gmail.com and abarg@umd.edu. Research supported by NSF grants CCF1422955 and CCF1618603.}
}
\renewcommand{\thefootnote}{\arabic{footnote}}
\setcounter{footnote}{0}

\begin{abstract}
We consider the minimax estimation problem of a discrete distribution with support size $k$ under locally differential privacy constraints. A privatization scheme is applied to each raw sample independently, and we need to estimate the distribution of the raw samples from the privatized samples. A positive number $\epsilon$ measures the privacy level of a privatization scheme. 

In our previous work (arXiv:1702.00610), we proposed a family of new privatization schemes and the corresponding estimator. We also 
proved that our scheme and estimator are order optimal in the regime $e^{\epsilon} \ll k$ under both $\ell_2^2$ and $\ell_1$ loss. 
In other words, for a large number of samples the worst-case estimation loss of our scheme was shown to differ from the optimal value by at most a constant factor. In this paper, we eliminate this gap by showing asymptotic optimality of the proposed scheme  and estimator 
under the $\ell_2^2$ (mean square) loss. More precisely, we show that for any $k$ and $\epsilon,$ the ratio between the worst-case estimation loss of our scheme and the optimal value approaches $1$ as the number of samples tends to infinity.
\end{abstract}

{\small\tableofcontents}

\vspace*{1in}
\section{Introduction} This paper continues our work \cite{Ye17}. The context of the problem that we consider is
related to a major challenge in the statistical analysis of user data, namely, the conflict between learning accurate 
statistics and protecting sensitive information about the individuals. 
As in \cite{Ye17}, we rely on a particular formalization of user privacy called {\em differential privacy}, introduced in \cite{Dwork06, Dwork08}.
Generally speaking, differential privacy requires that the adversary not be able to reliably infer an individual's data from public statistics even with access to all the other users' data.
The concept of differential privacy has been developed in two different contexts: the {\em global privacy}
context (for instance, when institutions release statistics related to groups of people) \cite{Ghosh12}, and the {\em local privacy} context when individuals disclose their personal data \cite{Duchi13}.

In this paper, we consider the minimax estimation problem of a discrete distribution with support size $k$ under locally differential privacy.
This problem has been studied in the non-private setting \cite{Kamath15, Lehmann06}, where we can learn the distribution from the raw samples.
In the private setting, we need to estimate the distribution of raw samples from the privatized samples which are generated independently from the raw samples according to a conditional distribution  $\mathbi{Q}$ (also called a {\em privatization scheme}).
Given a privacy parameter $\epsilon>0,$
we say that $\mathbi{Q}$ is $\epsilon$-locally differentially private if the probabilities of the same output conditional on different inputs differ by a factor of at most $e^{\epsilon}.$ Clearly, smaller $\epsilon$ means that it is more difficult to infer the original data from the privatized samples, and thus leads to higher privacy.
For a given $\epsilon,$ our objective is to find the optimal privatization scheme with $\epsilon$-privacy level to minimize the expected estimation loss for the worst-case distribution.
In this paper, we are mainly
concerned with the scenario where we have a large number of
samples, which captures the modern trend toward ``big data" analytics.

\subsection{Existing results}\label{sec:existing} The following two privatization schemes are the most well-known in the literature: the $k$-ary Randomized Aggregatable Privacy-Preserving Ordinal Response ($k$-RAPPOR) scheme \cite{Duchi13a, Erlingsson14}, and the $k$-ary Randomized Response ($k$-RR) scheme
\cite{Warner65,Kairouz14}.
The $k$-RAPPOR scheme is order optimal in the high privacy regime where $\epsilon$ is very close to $0,$
and the $k$-RR scheme is order optimal in the low privacy regime where $e^{\epsilon} \approx k$ \cite{Kairouz16}. 
Very recently, a family of privatization schemes and the corresponding estimators were proposed independently by Wang et al. \cite{Wang16} and the present authors \cite{Ye17}. In \cite{Ye17}, we further showed that
under both $\ell_2^2$ and $\ell_1$ loss, these privatization schemes and the corresponding estimators are order-optimal in the medium to high privacy regimes when $e^{\epsilon} \ll k.$

Duchi et al.~\cite{Duchi16} gave an order-optimal lower bound on the minimax private estimation loss for the high privacy regime where $\epsilon$ is very close to $0$. In \cite{Ye17}, we proved a stronger lower bound which is order-optimal in the whole region $e^{\epsilon} \ll k$. This lower bound implies that the schemes and the estimators proposed in \cite{Wang16,Ye17} are order optimal in this regime.   
Here order-optimal means that the ratio between the true value and the lower bound is upper bounded by a constant (larger than 1) when $n$ and $k/e^{\epsilon}$ both become large enough.

\subsection{Our contributions}
In this paper, we focus on the $\ell_2^2$ (mean square) loss. We prove an asymptotically tight lower bound on the minimax private estimation loss for all values of $k$ and $\epsilon$. In other words, for every $k$ and every $\epsilon$, the ratio between the true value and our lower bound goes to $1$ when $n$ goes to infinity. This is a huge improvement over the lower bounds in \cite{Ye17} and \cite{Duchi16} for the following two reasons: First, although the lower bounds in \cite{Ye17} and \cite{Duchi16} are order-optimal, they differ from the true value by a factor of several hundred. In practice, an improvement of several percentage points is already considered as a substantial advance (see for instance,~\cite{Kairouz16}). So these order-optimal bounds are far from satisfactory. Second, the bounds in \cite{Ye17} and \cite{Duchi16} only hold for certain regions of $k$ and $\epsilon$ while the lower bound in this paper holds for all values of $k$ and $\epsilon$.

Furthermore, as an immediate consequence of our lower bound, we show that the schemes and the estimators proposed in \cite{Wang16,Ye17} are asymptotically optimal! In other words, the ratio between
the lower bound and the worst-case estimation loss of these schemes and estimators goes to $1$ when $n$ goes to infinity.

\subsection{Organization of the paper}
In Section~\ref{Sect:pre}, we formulate the problem and give a more detailed review of the existing results.
Section~\ref{Sect:ovr} is devoted to an overview of the main results of this paper and to illustrating the main ideas behind the proof.
Since the proof is very long and technical, we include a short Section~\ref{Sect:sketch}, where
we explain the argument in formal terms, while skipping many details. The complete proof is given in 
Section~\ref{Sect:Main} which (with Appendices) takes the most of the length of the paper.
In Section~\ref{Sect:FW}, we point out two possible directions for future research.

\section{Problem formulation and existing results}\label{Sect:pre}
\textbf{Notation:}
Let $\cX=\{1,2,\dots,k\}$ be the source alphabet and let $\mathbi{p}=(p_1,p_2,\dots,p_k)$ be a probability distribution on $\cX.$
Denote by $\Delta_k=\{\mathbi{p}\in \mathbb{R}^k: p_i\ge 0 \text{~for~} i=1,2,\dots,k, \sum_{i=1}^k p_i=1\}$ the $k$-dimensional probability simplex. Let $X$ be a random variable (RV) that takes values on $\cX$ according to $\mathbi{p}$, so that $p_i=P(X=i).$ Denote by $X^n=(X^{(1)},X^{(2)},\dots,X^{(n)})$ the vector formed of $n$ independent copies of the RV $X.$

\subsection{Problem formulation}
In the classical (non-private) distribution estimation problem, we are given direct access to i.i.d. samples
$\{X^{(i)}\}_{i=1}^n$ drawn according to some unknown distribution $\mathbi{p}\in \Delta_k.$ Our goal is to estimate $\mathbi{p}$ based on the samples \cite{Lehmann06}. We define an estimator $\hat{\mathbi{p}}$ as a function
$\hat{\mathbi{p}}:\cX^n \to \mathbb{R}^k,$ and assess its quality in terms of the worst-case risk (expected loss)
$$
\sup_{\mathbi{p}\in \Delta_k}  \underset{X^n\sim \mathbi{p}^n}{\mathbb{E}} \ell(\hat{\mathbi{p}}(X^n), \mathbi{p}),
$$
where $\ell$ is some loss function. The minimax risk is defined as the solution of the following saddlepoint problem:
$$
r_{k,n}^{\ell}:= \inf_{\hat{\mathbi{p}}} \sup_{\mathbi{p}\in \Delta_k} 
\underset{X^n\sim \mathbi{p}^n}{\mathbb{E}} \ell(\hat{\mathbi{p}}(X^n), \mathbi{p}).
$$

In the private distribution estimation problem, we can no longer access the raw samples $\{X^{(i)}\}_{i=1}^n.$ Instead, we estimate the distribution $\mathbi{p}$ from the privatized samples $\{Y^{(i)}\}_{i=1}^n,$ obtained by applying a privatization mechanism $\mathbi{Q}$ independently to each raw sample $X^{(i)}.$ A {\em privatization mechanism} (also called privatization scheme) $\mathbi{Q}:\cX\to\cY$ is simply a conditional distribution $\mathbi{Q}_{Y|X}.$ The
privatized samples $Y^{(i)}$ take values in a set $\cY$ (the ``output alphabet'') that does not have to be the same as $\cX.$

The quantities $\{Y^{(i)}\}_{i=1}^n$ are i.i.d. samples drawn according to the marginal distribution $\mathbi{m}$ given by
\begin{equation}\label{eq:defm}
\mathbi{m}(S)=\sum_{i=1}^k \mathbi{Q}(S|i)p_i
\end{equation}
 for any $S\in \sigma(\cY),$ where $\sigma(\cY)$ denotes an appropriate $\sigma$-algebra on $\cY.$
In accordance with this setting, the estimator $\hat{\mathbi{p}}$ is a measurable function $\hat{\mathbi{p}}:\cY^n\to \mathbb{R}^k.$
We assess the quality of the privatization scheme $\mathbi{Q}$ and the corresponding estimator $\hat{\mathbi{p}}$ by the worst-case risk 
$$
r_{k,n}^{\ell} (\mathbi{Q}, \hat{\mathbi{p}}) := \sup_{\mathbi{p}\in \Delta_k} 
\underset{Y^n\sim \mathbi{m}^n}{\mathbb{E}} \ell(\hat{\mathbi{p}}(Y^n), \mathbi{p}),
$$
where $\mathbi{m}^n$ is the $n$-fold product distribution and $\mathbi m$ is given by \eqref{eq:defm}.
Define the {\em minimax risk} of the privatization scheme $\mathbi{Q}$ as
  \begin{equation}\label{eq:riskQ}
r_{k,n}^{\ell} (\mathbi{Q}):= \inf_{\hat{\mathbi{p}}} r_{k,n}^{\ell} (\mathbi{Q}, \hat{\mathbi{p}}).
   \end{equation}
\begin{definition}
For a given $\epsilon>0,$
a privatization mechanism $\mathbi{Q}:\cX\to\cY$ is said to be {\em $\epsilon$-locally differentially private} if 
\begin{equation}\label{eq:defep}
\sup_{S\in\sigma(\cY)} \frac{\mathbi{Q}(Y\in S|X=x)}{\mathbi{Q}(Y\in S|X=x')} \le e^{\epsilon}
\text{~for all~} x,x'\in\cX.
\end{equation}
\end{definition}

Denote by $\cD_{\epsilon}$ the set of all $\epsilon$-locally differentially private mechanisms. Given a privacy level 
$\epsilon,$ we seek to find the optimal $\mathbi{Q}\in\cD_{\epsilon}$ with the smallest possible minimax risk among all the 
$\epsilon$-locally differentially private mechanisms. Accordingly, define the $\epsilon$-private minimax risk as
 \begin{equation}
r_{\epsilon,k,n}^{\ell} := \inf_{\mathbi{Q}\in\cD_{\epsilon}}r_{k,n}^{\ell} (\mathbi{Q}).
  \label{eq:rekn}
  \end{equation}
  As already mentioned, we will limit ourselves to $\ell=\ell_2^2.$
  
\vspace*{.1in}
\noindent{\bf Main Problem:} \emph{Suppose that the cardinality $k$ of the source alphabet is known to the estimator. We would like to find the asymptotic growth rate of $r_{\epsilon,k,n}^{\ell_2^2}$ as $n\to\infty$ and to construct an asymptotically optimal privatization mechanism and a corresponding  estimator of $\mathbi{p}$ from the privatized samples.}
  
\vspace*{.1in}  It is this problem that we address---and resolve---in this paper. Specifically, we prove a lower bound
on $r_{\epsilon,k,n}^{\ell_2^2}$, which implies that the mechanism and the corresponding estimator
proposed in \cite{Ye17} are asymptotically optimal for the private estimation problem.

\subsection{Previous results}
In this section we briefly review known results that are relevant to our problem. In Sect.~\ref{sec:existing} we mentioned
several papers that have considered it, viz., \cite{Warner65,Duchi13a,Erlingsson14,Kairouz14,Kairouz16,Wang16,Duchi16}. 
In this section we discuss only the results of \cite{Ye17} since they subsume the (earlier) results of the mentioned references, and
since they are formulated in the form convenient for our presentation.

Let $\cD_{\epsilon,F}$ be the set of $\epsilon$-locally differentially private schemes with finite output alphabet. 
Let
  \begin{equation}\label{eq:DES}
\cD_{\epsilon,E}=\biggl\{ \mathbi{Q}\in\cD_{\epsilon,F}: 
\frac{\mathbi{Q}(y|x)}{\min_{x'\in\cX}\mathbi{Q}(y|x') } \in \{1,e^{\epsilon}\}
\text{~for all~} x\in\cX \text{~and all~} y\in\cY \biggr\}.
  \end{equation}
 In \cite[Theorem IV.5]{Ye17}, we have shown that
\begin{equation}\label{eq:red}
r_{\epsilon,k,n}^{\ell_2^2} = \inf_{\mathbi{Q}\in\cD_{\epsilon,E}} r_{k,n}^{\ell_2^2} (\mathbi{Q}).
\end{equation}
As a result, below we limit ourselves to schemes $\mathbi{Q}\in\cD_{\epsilon,E}$ in this paper. 
For such schemes, since the output alphabet is finite, we can write the marginal distribution $\mathbi{m}$  in \eqref{eq:defm} 
as a vector $\mathbi{m}=(\sum_{j=1}^k p_j \mathbi{Q}(y|j), y\in\cY).$ We will also use the shorthand notation
$\mathbi{m}=\mathbi{p}\mathbi{Q}$ to denote this vector.

In \cite{Ye17}, we introduced a family of privatization schemes
which are parameterized by the integer $d\in\{1,2,\dots,k-1\}.$ Given $k$ and $d,$ let the output alphabet be 
$\cY_{k,d}=\{y\in \{0,1\}^k: \sum_{i=1}^k y_i=d\},$ so $|\cY_{k,d}|=\binom{k}{d}.$ 

\begin{definition} [\cite{Ye17}] Consider the following privatization scheme:
   \begin{equation}\label{eq:defQ}
\mathbi{Q}_{k,\epsilon,d}(y|i)=\frac{e^\epsilon y_i+(1-y_i)}{\binom{k-1}{d-1}e^{\epsilon}+\binom{k-1}{d}} 
    \end{equation}
for all $y\in\cY_{k,d}$ and all $i\in\cX.$
The corresponding empirical estimator of $\mathbi{p}$ under $\mathbi{Q}_{k,\epsilon,d}$ is defined as follows:
\begin{equation}\label{eq:emp}
\hat{p_i}=\Big(\frac{(k-1)e^{\epsilon}+\frac{(k-1)(k-d)}{d}}{(k-d)(e^{\epsilon}-1)}\Big)\frac{T_i}{n}
-\frac{(d-1)e^{\epsilon}+k-d}{(k-d)(e^{\epsilon}-1)},
\end{equation}
where $T_i=\sum_{j=1}^n Y_i^{(j)}$ is the number of privatized samples whose $i$-th coordinate is $1$.
\end{definition}

It is easy to verify that $\mathbi{Q}_{k,\epsilon,d}$ is $\epsilon$-locally differentially private.
The estimation loss under $\mathbi{Q}_{k,\epsilon,d}$ and the empirical estimator is calculated in the following proposition.
\begin{proposition}\label{prop:risks}{\rm\cite[Prop.~III.1]{Ye17}} Suppose that the privatization scheme is 
 $\mathbi{Q}_{k,\epsilon,d}$ and the empirical estimator is given by \eqref{eq:emp}.
Let $\mathbi{m}=\mathbi{p}\mathbi{Q}_{k,\epsilon,d}.$
For all $\epsilon, n$ and $k,$ we have that
    \begin{equation}\label{eq:L2}
    \underset{Y^n\sim \mathbi{m}^n}{\mathbb{E}} \ell_2^2(\hat{\mathbi{p}}(Y^n),\mathbi{p})=\frac{1}{n}
     \Big(\frac{(d(k-2)+1)e^{2\epsilon}}{(k-d)(e^{\epsilon}-1)^2} +\frac{2(k-2)e^{\epsilon}}{(e^{\epsilon}-1)^2}
     +\frac{(k-2)(k-d)+1}{d(e^{\epsilon}-1)^2} -\sum_{i=1}^k p_i^2 \Big).
    \end{equation}
\end{proposition}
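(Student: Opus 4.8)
The plan is to reduce the claim to the elementary identity ``risk of an unbiased estimator equals its variance'' and then carry out the (lengthy but routine) algebra. Throughout, fix an index $j$ and write $m_i=\Pr(Y_i^{(j)}=1)$ for the probability that the $i$-th coordinate of a single privatized sample equals $1$; this number does not depend on $j$. The first step is to evaluate $m_i$. Because $\mathbi{Q}_{k,\epsilon,d}$ is invariant under permutations of the input alphabet, the conditional probability $\Pr(Y_i=1\mid X=l)$ takes only two values, according to whether $l=i$ or $l\neq i$. To compute them I would count the strings $y\in\cY_{k,d}$ with $y_i=1$ (there are $\binom{k-1}{d-1}$ of them) and, for $l\neq i$, split these further according to the value of $y_l$ (there are $\binom{k-2}{d-2}$ with $y_l=1$ and $\binom{k-2}{d-1}$ with $y_l=0$). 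Collecting terms and using $\binom{k-2}{d-1}/\binom{k-1}{d-1}=(k-d)/(k-1)$ together with $\binom{k-1}{d}/\binom{k-1}{d-1}=(k-d)/d$, this should give $m_i=a+b\,p_i$ with
\begin{equation*}
a=\frac{d\bigl((d-1)e^{\epsilon}+k-d\bigr)}{(k-1)(de^{\epsilon}+k-d)},\qquad
b=\frac{d(k-d)(e^{\epsilon}-1)}{(k-1)(de^{\epsilon}+k-d)} .
\end{equation*}

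Next, since $\mathbi{Q}_{k,\epsilon,d}$ is applied independently to the i.i.d.\ raw samples, the privatized vectors $Y^{(1)},\dots,Y^{(n)}$ are i.i.d., so $T_i=\sum_{j=1}^n Y_i^{(j)}$ is a sum of $n$ independent $\Bern(m_i)$ variables; hence $\mathbb{E}[T_i]=nm_i$ and $\Var(T_i)=nm_i(1-m_i)$. I would then check that the two constants in \eqref{eq:emp} are exactly $1/b$ and $a/b$, i.e.\ that $\hat p_i=\tfrac1b\bigl(T_i/n-a\bigr)$; this makes $\mathbb{E}[\hat p_i]=\tfrac1b(m_i-a)=p_i$, so the estimator is unbiased coordinatewise for every $\mathbi{p}$. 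Although the $T_i$'s are correlated (they are built from the same privatized samples), this is irrelevant here because the loss is a plain sum of squares $\ell_2^2(\hat{\mathbi{p}}(Y^n),\mathbi{p})=\sum_{i=1}^k(\hat p_i-p_i)^2$: by linearity of expectation and the bias--variance decomposition,
\begin{equation*}
\mathbb{E}\,\ell_2^2(\hat{\mathbi{p}}(Y^n),\mathbi{p})=\sum_{i=1}^k\Var(\hat p_i)=\frac{1}{nb^2}\sum_{i=1}^k m_i(1-m_i).
\end{equation*}

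To finish, I would substitute $m_i=a+bp_i$ and use $\sum_i p_i=1$ to get $\sum_i m_i(1-m_i)=ka(1-a)+b(1-2a)-b^2\sum_i p_i^2$, so that the risk equals $\frac1n\bigl(ka(1-a)/b^2+(1-2a)/b-\sum_i p_i^2\bigr)$; the $-\sum_i p_i^2$ term already matches \eqref{eq:L2}. It then remains to plug in the explicit $a$ and $b$ and simplify the resulting rational function of $e^{\epsilon}$; writing $D=de^{\epsilon}+k-d$ and using $1-a=(k-d)(D-1)/((k-1)D)$ keeps the intermediate expressions manageable, and one checks that the $e^{2\epsilon}$, $e^{\epsilon}$, and constant coefficients of the numerator land on $d(d(k-2)+1)$, $2d(k-2)(k-d)$, and $(k-d)((k-2)(k-d)+1)$ respectively, yielding \eqref{eq:L2}. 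The conceptual content of the argument is minimal; the main obstacle is purely computational --- correctly identifying the two-valued conditional probabilities in the first step and then pushing the final simplification through without algebraic slips. (One could also skip guessing $a$ and $b$ and instead compute $\mathbb{E}[\hat p_i]$ and $\Var(\hat p_i)$ directly from \eqref{eq:emp}; this is the same computation organized differently.)
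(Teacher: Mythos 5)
Your proposal is correct: the conditional probabilities, the values of $a$ and $b$, the identification of the coefficients in \eqref{eq:emp} with $1/b$ and $a/b$ (hence unbiasedness), and the final coefficients $d(d(k-2)+1)$, $2d(k-2)(k-d)$, $(k-d)((k-2)(k-d)+1)$ over the common denominator $d(k-d)(e^\epsilon-1)^2$ all check out. The paper itself states this proposition without proof (it is imported from \cite{Ye17}), and your unbiased-estimator/variance computation is the standard argument used there, so there is nothing further to compare.
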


The sum $\sum_{i}p_i^2$ is maximized for $\mathbi{p}_U=(1/k,1/k,\dots,1/k)$, so the worst-case distribution is
the uniform one. Substituting $\mathbi{p}_U$ in \eqref{eq:L2}, we obtain
  \begin{equation}\label{eq:rd}
r_{k,n}^{\ell_2^2} (\mathbi{Q}_{k,\epsilon,d}, \hat{\mathbi{p}})  =
\underset{Y^n\sim \mathbi{m}_U^n}{\mathbb{E}} \ell_2^2(\hat{\mathbi{p}}(Y^n),\mathbi{p}_U)=
\frac{(k-1)^2}{nk(e^{\epsilon}-1)^2} \frac{(d e^{\epsilon} + k-d)^2 }{d(k-d)}.
  \end{equation}

Given $k$ and $\epsilon$, define
\begin{equation}\label{eq:dstar}
d^\ast = d^\ast(k,\epsilon)
:= \argmin_{1\le d \le k-1}  \frac{(d e^{\epsilon} + k-d)^2 }{d(k-d)},
\end{equation}
where the ties are resolved arbitrarily.
We obtain 
  \begin{equation}\label{eq:rdstar}
r_{k,n}^{\ell_2^2} (\mathbi{Q}_{k,\epsilon,d^\ast}, \hat{\mathbi{p}}) = 
\min_{1\le d \le k-1} r_{k,n}^{\ell_2^2} (\mathbi{Q}_{k,\epsilon,d}, \hat{\mathbi{p}}).
  \end{equation}
By differentiation in \eqref{eq:dstar} we find that $d^\ast$ can only take two possible values given in the next proposition.
\begin{proposition} 
$$
d^\ast=\lceil k/(e^{\epsilon}+1) \rceil \text{ or } \lfloor k/(e^{\epsilon}+1)\rfloor.
$$
\end{proposition}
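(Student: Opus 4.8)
The plan is to relax $d$ to a continuous variable, show that the resulting function is strictly unimodal on $(0,k)$ with a single interior minimizer located at $k/(e^{\epsilon}+1)$, and then argue that the integer minimizer of \eqref{eq:dstar} must be one of the two integers flanking that point.

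Write $a=e^{\epsilon}>1$ and, for $d\in(0,k)$, set
\[
f(d):=\frac{(ad+k-d)^{2}}{d(k-d)}=\frac{\bigl((a-1)d+k\bigr)^{2}}{d(k-d)},
\]
so that by \eqref{eq:dstar} we have $d^{\ast}=\argmin_{1\le d\le k-1}f(d)$. First I would note that $f(d)\to+\infty$ both as $d\to 0^{+}$ and as $d\to k^{-}$ (the numerator is positive and bounded away from $0$ on $(0,k)$, while $d(k-d)\to 0^{+}$), so $f$ attains its infimum over $(0,k)$ at an interior stationary point. Next I would compute $f'$ by the quotient rule and set it equal to $0$; the common factor $(a-1)d+k>0$ cancels, and after expanding $2(a-1)\,d(k-d)$ and $\bigl((a-1)d+k\bigr)(k-2d)$ the $d^{2}$ terms cancel, leaving the linear equation $(a+1)d=k$. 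Hence $f$ has exactly one stationary point $d_{0}=k/(e^{\epsilon}+1)$ in $(0,k)$, which, together with the blow-up at the endpoints, forces $f$ to be strictly decreasing on $(0,d_{0}]$ and strictly increasing on $[d_{0},k)$; in particular $d_{0}$ is the unique minimizer of $f$ on $(0,k)$.

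Passing to the integers is then immediate from this monotonicity: for $d\in\{1,\dots,k-1\}$ one has $f(d)\ge f(\lfloor d_{0}\rfloor)$ whenever $d\le\lfloor d_{0}\rfloor$ and $f(d)\ge f(\lceil d_{0}\rceil)$ whenever $d\ge\lceil d_{0}\rceil$, and since no integer lies strictly between $\lfloor d_{0}\rfloor$ and $\lceil d_{0}\rceil$, every such $d$ falls into one of these two cases. Thus the minimum over $\{1,\dots,k-1\}$ is attained at $\lfloor d_{0}\rfloor$ or $\lceil d_{0}\rceil$, whichever lies in the range. It remains only to clear the degenerate cases: since $0<d_{0}=k/(e^{\epsilon}+1)<k/2\le k-1$ for $k\ge 2$, the value $\lceil d_{0}\rceil$ always belongs to $\{1,\dots,k-1\}$, and if $\lfloor d_{0}\rfloor=0$ the claim simply reduces to $d^{\ast}=1=\lceil d_{0}\rceil$. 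In every case $d^{\ast}\in\{\lfloor k/(e^{\epsilon}+1)\rfloor,\lceil k/(e^{\epsilon}+1)\rceil\}$.

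The whole argument is routine; the only step requiring any care is the algebraic simplification of $f'(d)=0$, where one must expand the two products correctly and notice that the $d^{2}$ terms cancel so that a genuinely linear equation survives. A shortcut that sidesteps even this is to substitute $u=(a-1)d+k$ and then $v=1/u$: on the relevant interval this rewrites $1/f$ as a downward-opening quadratic in $v$ whose maximizer $v=(a+1)/(2ka)$ corresponds directly to $d_{0}=k/(e^{\epsilon}+1)$, and which also exhibits the unimodality of $f$ at once.
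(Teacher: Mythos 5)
Your proof is correct and is exactly the argument the paper intends: the paper gives no details beyond the phrase ``by differentiation in \eqref{eq:dstar},'' and your computation (the factor $(e^{\epsilon}-1)d+k$ cancels, the $d^{2}$ terms cancel, and the stationarity condition reduces to $(e^{\epsilon}+1)d=k$, giving unimodality and hence the two flanking integers) is precisely that differentiation carried out. Nothing further is needed.
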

Therefore, when $k/(e^{\epsilon}+1) \le 1$, $d^\ast=1$; when $k/(e^{\epsilon}+1) > 1$, a simple comparison can determine the value of $d^\ast$.

Clearly, $r_{k,n}^{\ell_2^2} (\mathbi{Q}_{k,\epsilon,d^\ast}, \hat{\mathbi{p}})$ serves as an upper bound on
$r_{\epsilon, k,n}^{\ell_2^2}$. In \cite{Ye17}, we also proved an order optimal lower bound on 
$r_{\epsilon, k,n}^{\ell_2^2}$ in the regime $e^{\epsilon} \ll k$ and $n$ large enough.
Combining the upper and lower bounds, we obtain the following theorem.

\begin{theorem}\label{Thm:last}{\rm \cite{Ye17}} Let $e^{\epsilon} \ll k$. For $n$ large enough,
$$
r_{\epsilon, k,n}^{\ell_2^2} =  \Theta \Big( \frac{k e^{\epsilon}}{n (e^{\epsilon}-1)^2} \Big).
$$
\end{theorem}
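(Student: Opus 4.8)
The plan is to prove Theorem~\ref{Thm:last} by matching an upper bound coming from the explicit scheme $\mathbi{Q}_{k,\epsilon,d^\ast}$ with an information-theoretic lower bound, both of order $ke^\epsilon/(n(e^\epsilon-1)^2)$ in the regime $e^\epsilon\ll k$ and $n$ large. For the upper bound, since $r_{\epsilon,k,n}^{\ell_2^2}\le r_{k,n}^{\ell_2^2}(\mathbi{Q}_{k,\epsilon,d^\ast},\hat{\mathbi{p}})$, I would simply evaluate the right-hand side of \eqref{eq:rd} at $d=d^\ast$. By the preceding proposition $d^\ast=k/(e^\epsilon+1)+O(1)$; writing $de^\epsilon+k-d=d(e^\epsilon-1)+k$ one finds $d^\ast e^\epsilon+k-d^\ast\sim 2ke^\epsilon/(e^\epsilon+1)$ and $d^\ast(k-d^\ast)\sim k^2e^\epsilon/(e^\epsilon+1)^2$, so $(d^\ast e^\epsilon+k-d^\ast)^2/(d^\ast(k-d^\ast))=4e^\epsilon(1+o(1))$ as $k/e^\epsilon\to\infty$ (the $O(1)$ rounding of $d^\ast$ affects only lower-order terms because $e^\epsilon\ll k$). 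Multiplying by the prefactor $(k-1)^2/(nk(e^\epsilon-1)^2)$ gives $r_{\epsilon,k,n}^{\ell_2^2}=O(ke^\epsilon/(n(e^\epsilon-1)^2))$.

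The substantive part is the matching lower bound $r_{\epsilon,k,n}^{\ell_2^2}=\Omega(ke^\epsilon/(n(e^\epsilon-1)^2))$. By \eqref{eq:red} it suffices to bound $r_{k,n}^{\ell_2^2}(\mathbi{Q})$ from below uniformly over $\mathbi{Q}\in\cD_{\epsilon,E}$. Every such scheme attaches to each output $y$ a set $S_y\subseteq\cX$ and a weight $q_y>0$ with $\mathbi{Q}(y|x)=e^\epsilon q_y$ for $x\in S_y$ and $\mathbi{Q}(y|x)=q_y$ otherwise, and the normalization $\sum_y\mathbi{Q}(y|x)=1$ forces $Q:=\sum_{y:\,x\in S_y}q_y$ to be independent of $x$, so $\sum_y q_y|S_y|=kQ$ and $\sum_y q_y=1-(e^\epsilon-1)Q$. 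Since the estimation problem is invariant under permutations of $\cX$ and $\ell_2^2$ is convex, a standard symmetrization argument (averaging $\mathbi{Q}$ over the symmetric group) lets us assume $\mathbi{Q}$ is permutation-invariant and that the hard instance is a prior concentrated near the uniform distribution $\mathbi{p}_U$. I would then apply a van Trees (Bayesian Cram\'er--Rao) inequality along the $(k-1)$-dimensional tangent space of $\Delta_k$ at $\mathbi{p}_U$. For permutation-invariant $\mathbi{Q}$ the Fisher information $I_{\mathbi{Q}}(\mathbi{p}_U)$ of $\mathbi{p}\mapsto\mathbi{p}\mathbi{Q}$ is a scalar multiple of the tangent-space projection, so van Trees yields $r_{k,n}^{\ell_2^2}(\mathbi{Q})\ge (k-1)^2/(n\tr I_{\mathbi{Q}}(\mathbi{p}_U)+O(1))$, with $\tr I_{\mathbi{Q}}(\mathbi{p}_U)=(e^\epsilon-1)^2k\sum_y q_y|S_y|/((e^\epsilon-1)|S_y|+k)$. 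The heart of the matter is to maximize this trace over all admissible $(S_y,q_y)$: the summand is increasing and concave in $|S_y|$, so by Jensen, using $\sum_y q_y|S_y|=kQ$, $\sum_y q_y=1-(e^\epsilon-1)Q$ and $1\le|S_y|\le k$, the maximum is attained by a \emph{balanced} profile $|S_y|\equiv s$, and optimizing over $s$ and $Q$ shows it is $O\big(\tr I_{\mathbi{Q}_{k,\epsilon,d^\ast}}(\mathbi{p}_U)\big)$ precisely because $e^\epsilon\ll k$. Substituting back, $r_{k,n}^{\ell_2^2}(\mathbi{Q})=\Omega\big((k-1)^2/(n\tr I_{\mathbi{Q}_{k,\epsilon,d^\ast}}(\mathbi{p}_U))\big)=\Omega(ke^\epsilon/(n(e^\epsilon-1)^2))$ uniformly in $\mathbi{Q}\in\cD_{\epsilon,E}$, and together with the upper bound this gives $r_{\epsilon,k,n}^{\ell_2^2}=\Theta(ke^\epsilon/(n(e^\epsilon-1)^2))$.

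The main obstacle is the trace maximization, and in particular extracting the extra factor $e^\epsilon$. A naive strong-data-processing estimate---bounding $D_{\chi^2}(\mathbi{p}^+\mathbi{Q}\,\|\,\mathbi{p}^-\mathbi{Q})$ by $O((e^\epsilon-1)^2)\|\mathbi{p}^+-\mathbi{p}^-\|_1^2$ for nearby inputs---ignores how the multiplicative $e^\epsilon$-advantage is distributed across outputs and loses an entire factor of $e^\epsilon$, giving only $\Omega(k/(n(e^\epsilon-1)^2))$. One must instead use the concavity of $s\mapsto s/((e^\epsilon-1)s+k)$ together with the identity $\sum_y q_y|S_y|=kQ$ \emph{and} the range constraint $|S_y|\le k$ to show that no admissible scheme beats the balanced choice $|S_y|\approx k/(e^\epsilon+1)$ realized by $\mathbi{Q}_{k,\epsilon,d^\ast}$; it is exactly here that the hypothesis $e^\epsilon\ll k$ is used, to rule out degenerate ``concentrated'' schemes that would otherwise dominate the trace. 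Two further technical points are that the symmetrization must be carried out for LDP schemes with arbitrary finite output alphabets, so that $I_{\mathbi{Q}}(\mathbi{p}_U)$ is genuinely proportional to the tangent-space projection (allowing the trace bound to convert cleanly into a risk bound), and that the van Trees localization requires $n$ to exceed a constant multiple of $ke^\epsilon/(e^\epsilon-1)^2$, so that the prior's Fisher information is negligible beside $n\tr I_{\mathbi{Q}}$ and this trace is essentially constant over the prior's support---this is the source of the ``$n$ large enough'' hypothesis.
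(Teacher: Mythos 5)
Your overall architecture is sound for the order statement, but note first that the paper does not reprove this theorem here: it is quoted from \cite{Ye17}, where the upper bound is exactly your computation from \eqref{eq:rd} at $d=d^\ast$ and the lower bound is obtained by Assouad's method (a reduction to a finite hypothesis-testing family), not by a Bayesian Cram\'er--Rao argument. Your route for the lower bound --- Fisher information at $\mathbi{p}_U$, a trace maximization over the combinatorial structure $(S_y,q_y)$ of schemes in $\cD_{\epsilon,E}$, and van Trees localization --- is genuinely different from that cited proof, and is in substance the germ of what the present paper does in Section~\ref{Sect:Main} and Appendices~\ref{ap:bdtr}--\ref{app:rhp1} to get the sharp constant. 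What Assouad buys is robustness (no need to control the Fisher information uniformly over a prior); what your route buys is the correct leading constant, which is precisely the point of Theorem~\ref{Thm:Main}.

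Two caveats. First, your trace formula is off: the per-output contribution to $\tr I_{\mathbi{Q}}(\mathbi{p}_U)$ is proportional to $q_y\,|S_y|(k-|S_y|)/\bigl((e^\epsilon-1)|S_y|+k\bigr)$, not $k\,q_y|S_y|/\bigl((e^\epsilon-1)|S_y|+k\bigr)$; the factor $(k-|S_y|)$ is what annihilates outputs with $S_y=\cX$ (which carry no information) and is what makes the maximizer $|S_y|\approx k/(e^\epsilon+1)$ rather than $k/(e^\epsilon-1)$. Since your expression only \emph{over}-estimates the true trace by a bounded factor (at most $4$ over the whole range $1\le|S_y|\le k$), the resulting lower bound still has the right order $ke^\epsilon/(n(e^\epsilon-1)^2)$, so the $\Theta$ survives; but the same slip would destroy the asymptotically tight version. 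The correct computation is the chain \eqref{eq:rhp1}--\eqref{eq:rhp3} together with \eqref{eq:L}. Second, the symmetrization step needs care for $n$-fold products: the permutation must be shared across all $n$ samples (a per-sample independent permutation changes the statistical model), and the resulting channel is no longer a per-sample mechanism; it is cleaner to skip symmetrization entirely and apply the trace form of the multivariate van Trees inequality (or, as the paper does, bound $\tr(\Phi^{-1})+\mathbi{1}^T\Phi^{-1}\mathbi{1}$ from below by Cauchy--Schwarz as in Proposition~\ref{Prop:bdtr}, which requires no permutation invariance of $\Phi$).
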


\section{Overview of the results and the main ideas of the proof}\label{Sect:ovr}
\subsection{Main result of the paper}
Let 
  \begin{equation}\label{eq:Mke}
     M(k,\epsilon):= \frac{(k-1)^2}{k(e^{\epsilon}-1)^2}\frac{(d^\ast e^{\epsilon} + k-d^\ast)^2 }{d^\ast(k-d^\ast)}
  \end{equation}
where $d^\ast$ is defined in \eqref{eq:dstar}.
Note that 
$r_{k,n}^{\ell_2^2} (\mathbi{Q}_{k,\epsilon,d^\ast}, \hat{\mathbi{p}})
=\frac{1}{n}M(k,\epsilon)$.
     \begin{theorem}\label{Thm:Main}
For every $k$ and $\epsilon$, there are a positive constant $C(k,\epsilon)>0$ and an integer $N(k,\epsilon)$ such that when $n \ge N(k,\epsilon)$,
\begin{equation}\label{eq:cnlb}
r_{\epsilon, k,n}^{\ell_2^2} \ge
\frac 1n M(k,\epsilon)
 - \frac{C(k,\epsilon)} {n^{14/13} }.
\end{equation}
This result together with \eqref{eq:rdstar} and \eqref{eq:rd} implies that
\begin{equation}\label{eq:evob}
r_{\epsilon, k,n}^{\ell_2^2} 
= r_{k,n}^{\ell_2^2} (\mathbi{Q}_{k,\epsilon,d^\ast}, \hat{\mathbi{p}})  -   O ( n^{-14/13} )  .
\end{equation}
\end{theorem}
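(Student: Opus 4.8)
The plan is to combine a quantitative Bayesian (van Trees--type) lower bound for each fixed scheme with an information-geometric optimization that singles out $\mathbi{Q}_{k,\epsilon,d^\ast}$ at the uniform distribution. By \eqref{eq:red} it suffices to prove $r_{k,n}^{\ell_2^2}(\mathbi{Q})\ge\tfrac1n M(k,\epsilon)-O(n^{-14/13})$ uniformly over $\mathbi{Q}\in\cD_{\epsilon,E}$. Fix such a $\mathbi{Q}$; since $\mathbi{m}=\mathbi{p}\mathbi{Q}$ is linear in $\mathbi{p}$ and $Y$ is categorical with parameter $\mathbi{m}$, the per-sample Fisher information for $\mathbi{p}$ is $\mathcal I(\mathbi{p};\mathbi{Q})=\sum_{y\in\cY}m_y^{-1}\mathbi{q}_y\mathbi{q}_y^{\top}$, where $\mathbi{q}_y=(\mathbi{Q}(y|1),\dots,\mathbi{Q}(y|k))^{\top}$ and $m_y=\sum_i p_i\mathbi{Q}(y|i)$. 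Since projecting an estimator onto $\Delta_k$ never increases the $\ell_2^2$ loss, we may assume $\hat{\mathbi{p}}\in\Delta_k$; the simplex has tangent space $\mathbf 1^\perp$, on which the $\ell_2^2$ loss is the Euclidean metric, so the relevant Cram\'er--Rao quantity is $\tr\big((\mathcal I(\mathbi{p};\mathbi{Q})|_{\mathbf 1^\perp})^{-1}\big)$ (inverse on $\mathbf 1^\perp$), which also equals $\tr(\Lambda\,\mathcal I_{\mathrm{chart}}^{-1})$ for the usual chart with loss matrix $\Lambda=\mathrm{Id}_{k-1}+\mathbf 1\mathbf 1^{\top}$.

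\textbf{Bayesian step.} Lower bound $r_{k,n}^{\ell_2^2}(\mathbi{Q})$ by the Bayes risk of a smooth, radially symmetric prior $\pi$ supported on a Euclidean ball $B(\mathbi{p}_U,\delta_n)\subset\Delta_k$ (contained in the interior for $n$ large, since $k,\epsilon$ are fixed), and apply a finite-$n$ multivariate van Trees inequality:
\[
r_{k,n}^{\ell_2^2}(\mathbi{Q})\ \ge\ \frac1n\,\tr\big((\mathcal I(\mathbi{p}_U;\mathbi{Q})|_{\mathbf 1^\perp})^{-1}\big)\ -\ R_n(\mathbi{Q}),
\]
where $R_n(\mathbi{Q})$ absorbs the prior's Fisher information (of order $\delta_n^{-2}$ inside the inverse), the deviation $\mathbb E_\pi[\mathcal I(\mathbi{p};\mathbi{Q})]-\mathcal I(\mathbi{p}_U;\mathbi{Q})$ (of order $\delta_n^2$, the first-order term vanishing by symmetry of $\pi$), and higher-order remainders of the local log-likelihood expansion. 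Tuning $\delta_n$ as a power of $n$ to balance these contributions yields $R_n(\mathbi{Q})=O(n^{-14/13})$, uniformly in $\mathbi{Q}$.

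\textbf{Geometric step.} It remains to show $\tr\big((\mathcal I(\mathbi{p}_U;\mathbi{Q})|_{\mathbf 1^\perp})^{-1}\big)\ge M(k,\epsilon)$ for every $\mathbi{Q}\in\cD_{\epsilon,E}$. For any $\mathbi{Q}$, $\mathbf 1$ is an eigenvector of $\mathcal I(\mathbi{p}_U;\mathbi{Q})$ with eigenvalue $k$, and $\mathbf 1^{\top}\mathcal I(\mathbi{p}_U;\mathbi{Q})\mathbf 1=k^2$ (both because $\sum_i\mathbi{Q}(y|i)=k\,m_y$ at $\mathbi{p}_U$). Averaging $\mathcal I(\mathbi{p}_U;\mathbi{Q})$ over the $S_k$-action permuting $\cX$ and using convexity of $X\mapsto\tr(X^{-1})$ (Jensen) gives
\[
\tr\big((\mathcal I(\mathbi{p}_U;\mathbi{Q})|_{\mathbf 1^\perp})^{-1}\big)\ \ge\ \frac{(k-1)^2}{\tr\mathcal I(\mathbi{p}_U;\mathbi{Q})-k}.
\]
For $\mathbi{Q}\in\cD_{\epsilon,E}$ each output $y$ is described by its smaller value $c_y>0$ and a set $S_y\subseteq\cX$, $|S_y|=s_y$, with $\mathbi{Q}(y|i)\in\{c_y,e^\epsilon c_y\}$; then $\tr\mathcal I(\mathbi{p}_U;\mathbi{Q})=k\sum_y c_y\tfrac{s_ye^{2\epsilon}+k-s_y}{s_ye^{\epsilon}+k-s_y}$, while the normalization of $\mathbi{Q}$ forces $\sum_y c_y(s_ye^{\epsilon}+k-s_y)=k$. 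This is a fractional-knapsack problem with value density $g(s)=\tfrac{se^{2\epsilon}+k-s}{(se^{\epsilon}+k-s)^2}=\tfrac1k+\tfrac{(e^{\epsilon}-1)^2}{k}\cdot\tfrac{s(k-s)}{(se^{\epsilon}+k-s)^2}$ per unit weight, so $\tr\mathcal I(\mathbi{p}_U;\mathbi{Q})\le k^2\max_s g(s)$, with equality when all outputs share the optimal block size. Since $g$ is largest precisely where $\tfrac{(se^{\epsilon}+k-s)^2}{s(k-s)}$ is smallest, the maximizing block size is $s=d^\ast$ of \eqref{eq:dstar}, the bound is attained by $\mathbi{Q}_{k,\epsilon,d^\ast}$, and a short computation gives $k^2g(d^\ast)-k=(k-1)^2/M(k,\epsilon)$; substituting into the last two displays proves the claim. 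Moreover $\mathcal I(\mathbi{p}_U;\mathbi{Q}_{k,\epsilon,d^\ast})|_{\mathbf 1^\perp}$ is a scalar matrix, so all inequalities are equalities for the proposed scheme and the bound matches \eqref{eq:rd}.

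\textbf{Conclusion and main obstacle.} Combining the two steps, $r_{\epsilon,k,n}^{\ell_2^2}\ge\tfrac1n M(k,\epsilon)-O(n^{-14/13})$; with the upper bound $r_{\epsilon,k,n}^{\ell_2^2}\le r_{k,n}^{\ell_2^2}(\mathbi{Q}_{k,\epsilon,d^\ast},\hat{\mathbi{p}})=\tfrac1n M(k,\epsilon)$ from Proposition~\ref{prop:risks} and \eqref{eq:rd}, this gives \eqref{eq:cnlb} and \eqref{eq:evob}. The geometric step is short and conceptual; I expect the real difficulty---and the reason the exponent is $14/13$ rather than the $3/2$ that a naive count of the two leading error terms suggests---to be making the van Trees bound \emph{quantitative and uniform over all of $\cD_{\epsilon,E}$}. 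Schemes there may have arbitrarily large output alphabets and probabilities $m_y$ as small as $\Theta(c_y)$, where the standard regularity hypotheses (bounded higher log-derivatives) deteriorate; the fixes are to exploit the a priori bounds $\sum_y c_y\le1$ and $s_ye^{\epsilon}+k-s_y\ge k$ to control $\tr\mathcal I$ and the higher moments, to discard outputs with negligible $c_y$ while bounding the change this causes, and then to optimize the prior radius against the accumulated remainder. Extracting explicit $C(k,\epsilon)$ and $N(k,\epsilon)$ from this is the long technical core of the proof.
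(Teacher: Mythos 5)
Your overall architecture matches the paper's: reduce to $\cD_{\epsilon,E}$ via \eqref{eq:red}, lower-bound the minimax risk by a Bayes risk for a symmetric prior concentrated near $\mathbi{p}_U$, identify the main term with a Fisher-information functional at the uniform distribution, and minimize that functional over $\cD_{\epsilon,E}$. Your \emph{geometric step} is correct and is a genuinely different, arguably cleaner, route to the paper's inequality \eqref{eq:tr}: the paper proves Proposition~\ref{Prop:bdtr} by an eigenvalue/Cauchy--Schwarz argument (Proposition~\ref{Prop:Zac}) and then evaluates the resulting quantity by the explicit algebra leading to \eqref{eq:rhp3} and the bound \eqref{eq:L}, whereas your symmetrization over $S_k$ combined with convexity of $X\mapsto\tr(X^{-1})$ yields the same intermediate bound $(k-1)^2/(\tr\mathcal{I}-k)$, and your fractional-knapsack reading of $\tr\mathcal{I}$ with weights $c_y(s_ye^{\epsilon}+k-s_y)$ summing to $k$ is an equivalent but more transparent version of Appendix~\ref{app:rhp1}. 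The identities you rely on ($\mathcal{I}\mathbf{1}=k\mathbf{1}$ at $\mathbi{p}_U$, the decomposition of $g(s)$, and $k^2g(d^\ast)-k=(k-1)^2/M(k,\epsilon)$) all check out.

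The gap is in the Bayesian step, and it is precisely the issue the theorem exists to overcome. You invoke a finite-$n$ van Trees inequality and assert $R_n(\mathbi{Q})=O(n^{-14/13})$ uniformly in $\mathbi{Q}$, but the error contributed by the prior's information is of order $\delta_n^{-2}n^{-2}$ only after multiplication by quantities controlled by $\|(\mathcal{I}(\mathbi{p}_U;\mathbi{Q})|_{\mathbf{1}^\perp})^{-1}\|$, and the smallest eigenvalue of $\mathcal{I}|_{\mathbf{1}^\perp}$ has \emph{no uniform positive lower bound} over $\cD_{\epsilon,E}$ --- it can be exactly zero (take a scheme whose output patterns never separate symbols $1$ and $2$). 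None of the fixes you list at the end (controlling $\sum_y c_y$, discarding outputs with negligible $c_y$) touches this: the obstruction is degeneracy of the Fisher information on the tangent space, not smallness of output probabilities or largeness of the output alphabet. The paper resolves it with a dichotomy on the quantity $\delta(\mathbi{Q})$ of \eqref{eq:skt2}: when $\delta\ge\delta_0(k,\epsilon)$ all constants in the posterior analysis depend only on $k,\epsilon$ through $\delta_0$, and when $\delta<\delta_0$ Le Cam's two-point method already yields a bound of $2M(k,\epsilon)/n$, exceeding the target. Some case split of this kind --- or an explicit direction-by-direction analysis of $\tr(\Lambda(n\bar{\mathcal{I}}+\mathcal{I}_\pi)^{-1})$ showing that directions with $n\mu_i\lesssim\delta_n^{-2}$ already contribute at least $\delta_n^2\gg M/n$ --- is indispensable and is absent from your proposal. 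Note finally that the paper avoids van Trees altogether, carrying out an elementary, fully quantitative Gaussian approximation of the posterior under a uniform prior on an ellipsoid, precisely because off-the-shelf asymptotic normality statements give pointwise rather than uniform control; your route may well be viable, but the uniformity argument you defer is the entire technical content of the result.
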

This theorem completely determines the dominant term of $r_{\epsilon, k,n}^{\ell_2^2}$. It also gives an upper bound on the order of the remainder term. Moreover, this theorem also implies that our scheme
$\mathbi{Q}_{k,\epsilon,d^\ast}$ and the empirical estimator $\hat{\mathbi{p}}$ defined in \eqref{eq:emp}, both proposed in \cite{Ye17}, are asymptotically optimal for the Main Problem.

\subsection{Main ideas of the proof}
In this subsection we illustrate the main ideas that lead to determining the {\em dominant term} of $r_{\epsilon, k,n}^{\ell_2^2}$.
In view of \eqref{eq:rd}--\eqref{eq:rdstar} we need to show that
$
\lim_{n\to \infty} n r_{\epsilon,k,n}^{\ell_2^2} = M(k,\epsilon).
$
Clearly, $r_{\epsilon, k,n}^{\ell_2^2} \le
 r_{k,n}^{\ell_2^2} (\mathbi{Q}_{k,\epsilon,d^\ast}, \hat{\mathbi{p}})$
for all $n\ge 1$, implying that
$
\limsup_{n\to \infty} n r_{\epsilon,k,n}^{\ell_2^2} \le 
M(k,\epsilon).
$
Therefore, we only need to show the lower bound
\begin{equation}\label{eq:oblb}
\liminf_{n\to \infty} n r_{\epsilon,k,n}^{\ell_2^2} =
\liminf_{n\to \infty} n \inf_{\mathbi{Q} \in \cD_{\epsilon,E}} r_{k,n}^{\ell_2^2} (\mathbi{Q}) \ge 
M(k,\epsilon).
\end{equation}

Lower bounds on $r_{\epsilon,k,n}^{\ell_2^2}$ can be derived using Assouad's method; see \cite{Duchi16,Ye17}. 
More specifically, we can choose a finite subset of the probability simplex and assume that the probability distributions only come from 
this finite set, thereby reducing the original estimation problem to a hypothesis testing problem. This approach enables
us to derive the correct scaling order of $r_{\epsilon, k,n}^{\ell_2^2}$; see Theorem \ref{Thm:last}. Deriving the correct constant in front of the main
term is more problematic because {Assouad's method relies on reducing} a continuous domain (the probability simplex) to a finite set.

In this paper, we use a different approach to obtain an asymptotically tight lower bound in \eqref{eq:oblb}.
Since the worst-case estimation loss is always lower bounded by the average estimation loss, the minimax risk $r_{\epsilon, k,n}^{\ell_2^2}$ can be bounded below by the Bayes estimation loss. More specifically, we assume that the probability distributions are chosen uniformly from a small neighborhood of the uniform distribution $\mathbi{p}_U$. Surprisingly, the lower bound on the Bayes estimation loss turns out to be asymptotically the same as the worst-case estimation loss of our scheme and estimator. In other words, the ratio between these two quantities goes to $1$ when $n$ goes to infinity.

In order to obtain the lower bound on the Bayes estimation loss, we refine a classical method in asymptotic statistics, namely, local asymptotic normality (LAN) of the posterior distribution \cite{LeCam12,Ibrag81,Hajek72}.
We briefly describe the implications of LAN for our problem and explain why the classical approach does not directly apply to our problem.
Our objective is to estimate $\mathbi{p}$ from the privatized samples $Y^n$. In the Bayesian setup, we assume that $\mathbi{p}$ is drawn uniformly from $\cP$, a very small neighborhood of $\mathbi{p}_U$. Let 
$\mathbi{P}=[P_1,P_2,\dots,P_k]$ denote the random vector that corresponds to $\mathbi{p}$. Applying the LAN method of \cite{LeCam12,Ibrag81,Hajek72}, one can show that
when the radius of $\cP$ is of order $n^{-1/2}$, with large probability the conditional distribution of $\mathbi{P}$ given $Y^n$ 
approaches a jointly Gaussian distribution as $n$ goes to infinity, and the covariance matrix $\Sigma=\Sigma(n,\mathbi{Q})$ of this Gaussian distribution is completely determined by $n$ and the privatization scheme $\mathbi{Q}$. Note that $\Sigma$ is independent of the value of $Y^n$. 
We further note that the top-left $(k-1)\times (k-1)$ submatrix of $\Sigma$ is the inverse of the Fisher information matrix computed with
respect to the parameters $p_1,p_2,\dots,p_{k-1}$ (a detailed discussion of this issue appears in Appendix~\ref{ap:LAN}).
It is clear that the trace of $\Sigma$ serves an asymptotic lower bound on $r_{k,n}^{\ell_2^2} (\mathbi{Q})$. 
At the same time, for all $\mathbi{Q} \in \cD_{\epsilon,E}$, we can show that 
$$
\tr(\Sigma) \ge r_{k,n}^{\ell_2^2} (\mathbi{Q}_{k,\epsilon,d^\ast}, \hat{\mathbi{p}}),
$$
where $\mathbi{Q}_{k,\epsilon,d^\ast}$ and $\hat{\mathbi{p}}$ are given respectively in \eqref{eq:defQ} and \eqref{eq:emp}.
Therefore,
   $$
\liminf_{n\to \infty} n r_{k,n}^{\ell_2^2} (\mathbi{Q}) \ge M(k,\epsilon)
\text{~for all~} \mathbi{Q} \in \cD_{\epsilon,E}.
   $$
This inequality gives a strong intuition of why \eqref{eq:oblb} should hold. However, it does not imply \eqref{eq:oblb} because a pointwise asymptotic lower bound does not imply a uniform asymptotic lower bound. For this reason, we cannot directly use
the classical approach and must develop more delicate arguments that prove \eqref{eq:oblb} and \eqref{eq:evob}.
Another feature of our approach worth mentioning is that, unlike the methods in \cite{LeCam12,Ibrag81,Hajek72}, our proof is completely elementary.

\section{Sketch of the proof of Theorem~\ref{Thm:Main}}\label{Sect:sketch}
In the previous section we explained a general approach to the proof of   \eqref{eq:oblb}. The formal
proof that we will present is rather long and technical. For this reason, in this section we outline a ``road map'' to help the 
readers to follow our arguments. 

For reader's convenience we made a short table of our main notation;  see Table~\ref{definitions}.
Most, although not all, definitions from this table are also given in the main text.

The following conventions about our notation are made with reference to Table~\ref{definitions}. The vector $\mathbi{u}$ is chosen to be $(k-1)$-dimensional ($\mathbi{u}$ is a function of $\mathbi{p},$ but we omit the dependence for notational simplicity). At the same time, using the normalization condition, we
define  $u_k=u_k(\mathbi{u}):=-\sum_{j=1}^{k-1} u_j.$
Clearly, $u_k=p_k-1/k.$ The same convention is used for the random vector $\mathbi{U}$ and for the vector 
$\tilde{\mathbi{u}}$ both of which appear later in the paper.

The following obvious relations will be repeatedly used in the proof:
  \begin{equation}\label{eq:qu}
  \sum_{i=1}^L q_i = \sum_{i=1}^Lq_{ij}=1 \text{ for all }j\in[k], \quad \sum_{j=1}^k u_j=0, \quad \sum_{i=1}^L t_i(y^n)=n.
  \end{equation}

\begin{table}[H]
\centering
\large
\caption{\large Main notation}\label{definitions}
\begin{tabular}{| m{5cm} | m{10cm} |}
\hline
$\cX=\{1,2,\dots,k\}$ & source alphabet \\ \hline
$\mathbi{p}=(p_1,\dots,p_k)$ & distribution on $\cX$, where $p_j= P(X=j)$\\ \hline
$\cD_{\epsilon,E}$ & set of privatization mechanisms of the form \eqref{eq:DES}\\ \hline
$\mathbi{u}=(u_1,\dots,u_{k-1})$ & difference between the first $k-1$ coordinates of $\mathbi{p}$\\
$u_j:=p_j-\frac 1k, j=1,\dots,k-1$ & and the uniform distribution\\ \hline
$u_k=u_k(\mathbi{u}):=-\sum_{j=1}^{k-1} u_j$ & 
by definition $u_k=p_k - 1/k$ \\ \hline
$L'$ &  cardinality of the original output alphabet \\ \hline
$\cY$ &  original output alphabet. WLOG we assume $\cY=\{1,2,\dots,L'\}$ \\ \hline
$L$ &the number of equivalence classes $\{A_1,A_2,\dots,A_L\}$ in $\cY$ \\ \hline
$\{A_1,A_2,\dots,A_L\}$ &  equivalent output alphabet after symbol merging \\ \hline
$\mathbi{Q}=(\mathbi{Q}(i|j))_{i\in[L'],j\in[k]}$ 
&privatization mechanism (conditional distribution)\\ \hline
$q_{ij}, i\in[L],j\in[k]$ &  conditional probability of observing output symbols in $A_i$ if the raw sample is $j$ 
\remove{Here $A_i$ is the output symbol after the merging process.}\\ \hline
$q_i:=\frac 1k\sum_j q_{ij}, i\in[L]$ & 
by definition $q_i = P(Y\in A_i)$ when $\mathbi{p}$ is the uniform distribution over $\cX$ \\ \hline
$(t_i(y^n), i=1,\dots, L)$ &composition of the observed vector, 
$t_i(y^n)$ is the number of occurrences of symbol $A_i$ in $y^n$\\ \hline
$\mathbi{v} = (v_1,\dots,v_L)$  &  \\
$v_i:=t_i(y^n)-nq_i, i\in[L]$ & \\ \hline
$\mathbi {w}=\mathbi{w}(\mathbi{v},\mathbi{Q}) \in \mathbb{R}^{k-1}$ &vector with coordinates $\sum_{i=1}^L\frac{(q_{im}-q_{ik})v_i}{q_i},m=1,\dots,k-1$\\ \hline
$\mathbi{U}=(U_1,U_2,\dots,U_{k-1})$ & random vector corresponding to 
$\mathbi{u}=(u_1,\dots,u_{k-1})$\\ \hline
$U_k = U_k(\mathbi{U}) := -\sum_{j=1}^{k-1}U_j$ & random variable corresponding to $u_k$ \\ \hline
$\mathbi{V}=(V_1,\dots,V_L)$ & random vector corresponding to $\mathbi{v}=(v_1,\dots,v_L)$\\ \hline
$B(\alpha)$ &ellipsoid \eqref{eq:ellipsoid} of ``radius'' $\alpha$;\\
&$B_1:=B(n^{-5/13}),$  
 $B_2:=B(n^{-5/13}-3n^{-6/13}/\delta_0)$  \\
 \hline
\end{tabular}
\end{table} 

  Below we study distributions supported on ellipsoids, and we use the following generic notation: Given $\alpha>0,$ let us define an ellipsoid
  \begin{equation}\label{eq:ellipsoid}
B(\alpha)=\Big\{\mathbi{u}\in \mathbb{R}^{k-1}: 
 \sum_{i=1}^{k-1}  u_i^2 + \Big(\sum_{i=1}^{k-1}  u_i\Big)^2 < \alpha^2 \Big\}.
  \end{equation}
For future use we note that  $\mathbi{u}\in B(\alpha)$ if and only
$\mathbi{u}^T (I+J)\mathbi{u}<\alpha^2,$
where $I$ is the identity matrix and $J$ is the all-ones matrix.

\vspace*{.2in}
On account of \eqref{eq:red}, to prove \eqref{eq:cnlb} it suffices to show that for every $k$ and $\epsilon$, there are a positive constant $C(k,\epsilon)>0$ and an integer $N(k,\epsilon)$ such that when $n \ge N(k,\epsilon)$,
\begin{equation}\label{eq:indv}
\inf_{\mathbi{Q}\in \cD_{\epsilon,E}}r_{k,n}^{\ell_2^2}(\mathbi{Q}) \ge \frac 1n
M(k,\epsilon)
 - \frac{C(k,\epsilon)} {n^{14/13} }
\end{equation}
In the rest of this paper we will prove \eqref{eq:indv}. It is important to note that, because of the infimum on the distribution $\mathbi{Q}$, the constants
$C(k,\epsilon)$ and $N(k,\epsilon)$ should not depend on it. It is for this reason that the classical LAN approach does not directly apply. Below we provide more details about this.

\subsection{Output alphabet reduction}\label{Sect:OAR}
As mentioned above, we use Bayes estimation loss to bound $r_{\epsilon, k,n}^{\ell_2^2}$ below, and we assume that the probability distributions are chosen uniformly from a small neighborhood of the uniform distribution $\mathbi{p}_U$. Recalling the definition
of the vector $\mathbi{u}$ and $u_k$ in Table~\ref{definitions}, we note that estimating $\mathbi{p}$ is equivalent to estimating $(u_1,\dots,u_k).$
Let $\mathbi{U}$ be the random vector that corresponds to $\mathbi{u}$. We assume that $\mathbi{U}$ is uniformly distributed over the ellipsoid
   \begin{equation}\label{eq:B1}
B_1=B\Big(\frac{1}{n^{5/13}}\Big).
  \end{equation}

Suppose that the size of the original output alphabet $\cY$ is some integer $L'$, which is independent of $k$ and $\epsilon$, and can take an arbitrarily large value.
Recall that our objective is \eqref{eq:indv}.
 If $L'$ enters the estimates of the estimation loss, then we can not bound it by a function of $k$ and $\epsilon$, which
is something we would like to avoid. This can be done using the following simple observation.
Without loss of generality, we assume that $\cY = \{1,2,\dots,L'\}$.
Since $\mathbi{Q}\in \cD_{\epsilon,E}$ (see \eqref{eq:DES}), for every $i\in\{1,\dots,L'\}$,
the vector $(\mathbi{Q}(i|j),j=1,\dots,k)$ is proportional to one of the vectors in the set $\{1,e^\epsilon\}^k$. 
It is easy to see that we can merge into one symbol all the output symbols that correspond to proportional vectors, thereby
reducing the size of the output alphabet without affecting the Bayes estimation loss. Suppose that, upon merging all
such symbols, the size of the output alphabet becomes $L$. Clearly, $L\le 2^k$, and we will henceforth assume that
the output alphabet is $\{A_1,A_2,\dots,A_L\}$ and let $q_{ij}$ denote the conditional probability of observing $A_i$ if the raw sample is $j\in[k]$.

\subsection{Gaussian approximation of the posterior pdf $f_{U|Y^n}$}
\subsubsection{Approximation}
For $i=1,2,\dots,L$,
let $q_i:=\frac{1}{k}\sum_{j=1}^k q_{ij}$, and let $t_i=t_i(y^n)$ be the number of times that symbol $A_i$ appears in $y^n$. Define $v_i:=v_i(y^n)=t_i(y^n)-n q_i$ for $i=1,\dots,L.$  Then for $\mathbi{u}\in B_1$, we have
   \begin{align}
f_{\mathbi{U}|Y^n}(\mathbi{u}|y^n) &\;\propto\;
\prod_{i=1}^{L} (q_i + \sum_{j=1}^k u_j q_{ij} )^{t_i} \nonumber\\ 
&\;\propto\;
\exp\Big( \sum_{i=1}^L  \Big( (n q_i + v_i)
\log \Big( 1 + \sum_{j=1}^k \frac{u_j q_{ij}}{q_i} \Big) \Big) \Big). \label{eq:f}
    \end{align}
Since our objective is to estimate $u_1,u_2,\dots,u_k$, we view all the factors that {do not contain} $\mathbi{u}$ as constants in the formula above. Let us introduce a notation for the exponent of the right-hand side:
  \begin{equation}\label{eq:g}
g(\mathbi{u},y^n) = \sum_{i=1}^L  \Big( (n q_i + v_i)
\log \Big( 1 + \sum_{j=1}^k \frac{u_j q_{ij}}{q_i} \Big) \Big).
  \end{equation}
Let $\mathbi{V}$ be a random vector corresponding to the vector $\mathbi{v}=(v_1,\dots,v_L)$. Since $P(Y=A_i)=\sum_{j=1}^k p_j q_{ij},$ the expectation of $V_i$ equals
$$
\mathbb{E} V_i = n \sum_{j=1}^k p_j q_{ij}-\frac nk \sum_{j=1}^k q_{ij}=\sum_{j=1}^k n u_j q_{ij}.
$$
Assuming that $\mathbi{u}\in B_1$, we therefore conclude that
$\mathbb{E} V_i = O(n^{8/13})$. By definition, $\Var(V_i)=\Var(t_i(Y^n))$ for all $i$. 
 According to the Central Limit Theorem, when $n$ is large, $\Var(V_i)=O(n)$.
As a consequence, when $n$ is large, with large probability, $V_i = O(n^{8/13})$.
{This fact together with the relation}
  $$
  \log \Big( 1 + \sum_{j=1}^k \frac{u_j q_{ij}}{q_i} \Big)\approx
  \sum_{j=1}^k \frac{u_j q_{ij}}{q_i} - \frac{1}{2}\Big(\sum_{j=1}^k \frac{u_j q_{ij}}{q_i}\Big)^2
  $$
{gives us the following approximation of
$g(\mathbi{u},y^n)$:}
$$
g(\mathbi{u},y^n) \approx 
\sum_{i=1}^L  v_i \sum_{j=1}^k \frac{u_j q_{ij}}{q_i}
-\frac{1}{2} \sum_{i=1}^L  n q_i  \Big(\sum_{j=1}^k \frac{u_j q_{ij}}{q_i} \Big)^2 
 = - \frac{1}{2} h_{\mathbi{v}}(\mathbi{u})
 + \sum_{i=1}^L \frac{v_i^2}{2n q_i},
$$
where for $\mathbi{v} = (v_1,v_2,\dots,v_L)$, the function $h_{\mathbi{v}}: \mathbb{R}^{k-1} \to \mathbb{R}$ is defined as
  \begin{equation}\label{eq:defh}
h_{\mathbi{v}}(\mathbi{u}) = \sum_{i=1}^L \frac{n}{q_i} \Big(\sum_{j=1}^k u_j q_{ij} - \frac{v_i}{n} \Big)^2 \text{~for all~} \mathbi{u} \in \mathbb{R}^{k-1}.
  \end{equation}
Thus, for large $n$ the density function of the posterior distribution is approximately given by
   $$
   f_{\mathbi{U}|Y^n}(\mathbi{u}|y^n) \;\propto\;  \exp (-\frac{1}{2}h_{\mathbi{v}}(\mathbi{u})),
   $$
    and $h_{\mathbi{v}}(\mathbi{u})$ is a quadratic function of $\mathbi{u}$. Had $\mathbi{u}$ been taking values in the entire space $\mathbb{R}^{k-1}$, we would be able to conclude that the posterior distribution of $\mathbi{U}$ given $Y^n$ is approximately Gaussian. 
Supposing that this is the case, define the matrix
$$
\Phi = \Phi(n,\mathbi{Q}) := \sum_{i=1}^L
{\frac{n}{q_i}}\Big((q_{i,1} - q_{ik}),
\dots, (q_{ik-1} - q_{ik}) \Big)^T
\Big((q_{i,1} - q_{ik}),
\dots, (q_{i,k-1} - q_{ik}) \Big),
$$
and the vector $\mathbi{w} \in \mathbb{R}^{k-1}$ 
$$
\mathbi{w}=\mathbi{w}(\mathbi{v},\mathbi{Q}) := \Big( \sum_{i=1}^L  \frac{
(q_{i,1}-q_{ik}) v_i  }{q_i}, \sum_{i=1}^L  \frac{
(q_{i,2}-q_{ik}) v_i  }{q_i}, \dots, \sum_{i=1}^L  \frac{
(q_{ik-1}-q_{ik}) v_i  }{q_i} \Big)^T.
$$
Then the covariance and the vector of means of this Gaussian distribution are given by $\Phi^{-1}$ and $\Phi^{-1}\mathbi{w}.$
Note that $\Phi$ is independent of the value of $Y^n$. {At the same time, since $\mathbi{w}$ depends on the value of $\mathbi{v}$, and $\mathbi{v}$ is a function of $y^n$, the mean vector $\Phi^{-1}\mathbi{w}$ does vary with the realization of $Y^n$.}

\vspace*{.1in}\subsubsection{Large deviations}
The above argument does not directly apply because $\mathbi{u}$ is limited to the ellipsoid $B_1$. In order to claim that the posterior distribution can be indeed approximated as Gaussian, we need to show that the entire mass is concentrated in $B_1,$ i.e., that under the Gaussian distribution $\cN(\Phi^{-1}\mathbi{w}, \Phi^{-1})$ the probability $P(\mathbi{U}\not\in B_1)\approx0.$

To build intuition, let us consider the one-dimensional case. Let $h(x)=\frac{(x-\mu)^2}{2\sigma^2}$ be the (absolute value of the) exponent of the Gaussian pdf. By the Chernoff bound, for $X\sim \cN(\mu,\sigma^2)$ we have
   \begin{equation}\label{eq:Chernoff}
P(|X-\mu|\ge t) \le 2 \exp(-\frac{t^2}{2\sigma^2}).
  \end{equation}
This bound immediately implies that for any $\tilde{x} \in \mathbb{R}$
$$
P(\sqrt{h(X)}-\sqrt{h(\tilde{x})} \ge t) \le 2 \exp(-t^2).
$$
The situation is similar in the multi-dimensional case. Indeed, one can show that for $\mathbi{U} \sim \cN(\Phi^{-1}\mathbi{w}, \Phi^{-1})$ we have the following inequality: for any $\tilde{\mathbi{u}}\in\mathbb{R}^{k-1}$ 
and any $\alpha>0$
$$
P(\sqrt{h_{\mathbi{v}}(\mathbi{U})} - \sqrt{h_{\mathbi{v}}(\tilde{\mathbi{u}})} > n^\alpha)
\le \exp(-n^{\alpha})
$$
for a large enough $n.$
Now our task is to show that for almost all $y^n\in\cY^n$, we can find a $\tilde{\mathbi{u}}=\tilde{\mathbi{u}}(y^n)$ such that
\begin{equation}\label{eq:osk}
B_1^c \subseteq \{\mathbi{u}\in\mathbb{R}^{k-1}:
\sqrt{h_{\mathbi{v}}(\mathbi{u})} - \sqrt{h_{\mathbi{v}}(\tilde{\mathbi{u}})} > n^\alpha \}
\end{equation}
for some $\alpha>0$, or more precisely, that \eqref{eq:osk} holds for all $y^n$ in a subset $E_2\subseteq \cY^n$ such that $P(Y^n\in E_2)\approx 1$.

Toward that end, we define another ellipsoid
   \begin{equation}\label{eq:B2}
B_2=B\Big( \frac{1}{n^{5/13}} - \frac{3/\delta_0}{n^{6/13}} \Big)
   \end{equation}
(see \eqref{eq:ellipsoid}), where $\delta_0$ is a constant which will be specified later.
Observe that the ratio between the radii of $B_2$ and $B_1$ approaches 1 when $n$ is large. Thus, 
$P(\mathbi{u} \in B_2)\approx P(\mathbi{u} \in B_1),$ and since $P(\mathbi{u}\in B_1)=1,$ we have $P(\mathbi{u} \in B_2)\approx 1$.
Conditional on the event $\mathbi{U} = \tilde{\mathbi{u}} \in B_2$, we have\footnote{Recall the convention that $u_k=-\sum_{i=1}^{k-1}u_i$ and $\tilde{u}_k=-\sum_{i=1}^{k-1} \tilde{u}_i$.} 
   $$
\mathbb{E}\Big(\sum_{i=1}^L \frac{n}{q_i} \Big(\sum_{j=1}^k \tilde{u}_j q_{ij} - \frac{V_i}{n} \Big)^2 \Big)  = \frac{1}{n} \sum_{i=1}^L \frac{1}{q_i} \Var\Big(t_i(Y^n) \Big) = O(1),
   $$
so for large $n$
   $$
\sqrt{\sum_{i=1}^L \frac{n}{q_i} \Big(\sum_{j=1}^k \tilde{u}_j q_{ij} - \frac{V_i(Y^n)}{n} \Big)^2 }
<  n^{1/26}
$$
with large probability.
We can phrase this as follows: conditional on $\mathbi{U} \in B_2$, for almost all $y^n\in\cY^n$  we can find $\tilde{\mathbi{u}} =\tilde{\mathbi{u}}(y^n) \in B_2$ such that
   $$
\sqrt{\sum_{i=1}^L \frac{n}{q_i} \Big(\sum_{j=1}^k \tilde{u}_j q_{ij} - \frac{v_i(y^n)}{n} \Big)^2 }
<  n^{1/26}.
   $$
Since $P(\mathbi{U} \in B_2)\approx 1$, this is the same as saying that for almost all $y^n\in\cY^n,$  we can find $\tilde{\mathbi{u}} =\tilde{\mathbi{u}}(y^n) \in B_2$ such that
\begin{equation}\label{eq:spl1}
\sqrt{h_{\mathbi{v}}(\tilde{\mathbi{u}})} 
= \sqrt{\sum_{i=1}^L \frac{n}{q_i} \Big(\sum_{j=1}^k \tilde{u}_j q_{ij} - \frac{v_i(y^n)}{n} \Big)^2 }
<  n^{1/26}.
\end{equation}
By the triangle inequality, for any $\tilde{\mathbi{u}} \in B_2$ and any $\mathbi{u}\in B_1^c$, we have 
\begin{equation}\label{eq:skt1}
\sqrt{\sum_{i=1}^k (u_i - \tilde{u}_i)^2} > \frac{3/\delta_0}{n^{6/13}}.
\end{equation}
Our next goal is to use this inequality to bound below the quantity $\big(\sum_{i=1}^L \frac{1}{q_i} 
(\sum_{j=1}^k (u_j - \tilde{u}_j) q_{ij} ^2\big)^{1/2}.$ Let us introduce the quantity
  \begin{equation}\label{eq:skt2}
\delta=\delta(\mathbi{Q}) := \min_{{\mathbi{u}\in\mathbb{R}^{k-1}:\, \sum_{i=1}^k u_i^2=1}}
\Big(\sum_{i=1}^L \frac{1}{q_i} \Big(\sum_{j=1}^k u_j q_{ij} \Big)^2\Big)^{1/2}
  \end{equation}
Intuitively, $\delta$ measures how well $\mathbi{Q}$ can distinguish between different $\mathbi{u}$'s. 
Our argument proceeds differently depending on whether $\delta\ge \delta_0$ or not.

If $\delta$ is small, then there is a pair $\mathbi{u}$ and $\mathbi{u}'$ that are well-separated from each other 
by the $\ell_2$ distance, but the posterior probabilities of $\mathbi{u}$ and $\mathbi{u}'$ given $y^n$ are close to each other. 
Thus, the case $\delta<\delta_0$ for a sufficiently small constant $\delta_0$ can be handled
by a straightforward application of the Le Cam method, and the main obstacle is represented by the opposite case.

For $\delta\ge \delta_0$, according to \eqref{eq:skt1} and \eqref{eq:skt2}, for any $\tilde{\mathbi{u}} \in B_2$ and any $\mathbi{u}\in B_1^c$,
\begin{equation}\label{eq:spl2}
\begin{aligned}
\sqrt{\sum_{i=1}^L \frac{n}{q_i} \Big(\sum_{j=1}^k \tilde{u}_j q_{ij} - u_j q_{ij} \Big)^2 }
& = n^{1/2} \sqrt{\sum_{i=1}^L \frac{1}{q_i} \Big(\sum_{j=1}^k (\tilde{u}_j - u_j) q_{ij} \Big)^2 } \\
& \ge n^{1/2} \delta \sqrt{\sum_{i=1}^k (u_i - \tilde{u}_i)^2}\\
&\ge n^{1/2} \delta_0 \frac{3/\delta_0}{n^{6/13}}\\
& = 3 n^{1/26}.
\end{aligned}
\end{equation}
Combining \eqref{eq:spl1} and \eqref{eq:spl2} and using the triangle inequality, we conclude that for almost all $y^n\in\cY^n$
and any $\mathbi{u}\in B_1^c$
\begin{equation}\label{eq:spl3}
\sqrt{h_{\mathbi{v}}(\mathbi{u})} =
\sqrt{\sum_{i=1}^L \frac{n}{q_i} \Big(\sum_{j=1}^k u_j q_{ij} - \frac{v_i(y^n)}{n} \Big)^2 }
\ge 2  n^{1/26}.
\end{equation}
Now combining \eqref{eq:spl3} with \eqref{eq:spl1}, we deduce that
for almost all $y^n\in\cY^n$, we can find a $\tilde{\mathbi{u}}=\tilde{\mathbi{u}}(y^n)$ such that
$$
B_1^c \subseteq \{\mathbi{u}\in\mathbb{R}^{k-1}:
\sqrt{h_{\mathbi{v}}(\mathbi{u})} - \sqrt{h_{\mathbi{v}}(\tilde{\mathbi{u}})} > n^{1/26} \}.
$$
Once the details are filled in, this will establish \eqref{eq:osk}, and thus we will be able to conclude that for almost all $y^n\in\cY^n$, the posterior distribution of $\mathbi{U}$ given $Y^n=y^n$ is very close to $\cN(\Phi^{-1}\mathbi{w}, \Phi^{-1})$.

It is a standard fact that under $\ell_2^2$ loss, the optimal Bayes estimator for $\mathbi{u}$ is $\mathbb{E}(\mathbi{U}|Y^n)$. Therefore, the optimal Bayes estimation loss is 
$\sum_{i=1}^k \mathbb{E}(U_i - \mathbb{E}(U_i|Y^n))^2$, and this is equal to the sum of the variances of the posterior distributions of $U_i$ given $Y^n$.
We just showed that the posterior distribution is very close to $\cN(\Phi^{-1}\mathbi{w}, \Phi^{-1})$. 
Therefore, $\sum_{i=1}^k \mathbb{E}(U_i - \mathbb{E}(U_i|Y^n))^2$
can be approximated as $\tr(\Phi^{-1}) + \mathbi{1}^T \Phi^{-1} \mathbi{1}$, where $\mathbi{1}$ is the column vector of $k-1$ ones. 
More precisely, we can show that
$$
r_{k,n}^{\ell_2^2} (\mathbi{Q}) \ge
\tr(\Phi^{-1}) + \mathbi{1}^T \Phi^{-1} \mathbi{1} - \big| O \big( n^{-14/13} \big) \big|.
$$
Then we will prove that for all $\mathbi{Q}\in \cD_{\epsilon,E}$,
$$
\tr(\Phi^{-1}) + \mathbi{1}^T \Phi^{-1} \mathbi{1}
\ge \frac 1n M(k,\epsilon)
$$
and since by \eqref{eq:dstar},\,\eqref{eq:rdstar}, $r_{k,n}^{\ell_2^2} (\mathbi{Q}_{k,\epsilon,d^\ast}, \hat{\mathbi{p}})=\frac 1n M(k,\epsilon),$ this 
will prove Theorem~\ref{Thm:Main}.

\section{Asymptotically tight lower bound: Proof of Theorem~\ref{Thm:Main}}\label{Sect:Main}
In this section we develop the plan of attack outlined in Section~\ref{Sect:sketch}. Since the proof is rather long, we divide it into several steps, isolating each of them in its own subsection.

\subsection{Output alphabet reduction}\label{Sect:merge}
Here we fill in the details left out in Sec.~\ref{Sect:OAR}.
Given $\mathbi{Q}\in \cD_{\epsilon,E},$ assume without loss of generality that the output alphabet is $\cY=\{1,2,\dots,L'\}$ for some integer $L'.$  Define an equivalence relation
``$\equiv$" on $\cY$ as follows: 
for $i_1,i_2\in\{1,2,\dots,L'\},$ we say that $i_1 \equiv i_2$ if 
\begin{equation}\label{eq:eqv}
\frac{\mathbi{Q}(i_1|j)}{\sum_{j'=1}^k \mathbi{Q}(i_1|j')}
= \frac{\mathbi{Q}(i_2|j)}{\sum_{j'=1}^k \mathbi{Q}(i_2|j')}
\text{~for all~} j\in[k].
\end{equation}
In other words, we say that $i_1 \equiv i_2$ if the vectors $(\mathbi{Q}(i_1|1), \mathbi{Q}(i_1|2), \dots, \mathbi{Q}(i_1|k))$ and $(\mathbi{Q}(i_2|1), \mathbi{Q}(i_2|2), \linebreak[4]  \dots, \mathbi{Q}(i_2|k))$ are proportional to each other.
It is easy to verify that $\equiv$ is indeed an equivalence relation and therefore it induces a partition of $\cY$ into $L$ disjoint equivalence classes $\{A_i\}_{i=1}^L$, so $\cY=\cup_{i=1}^L A_i.$ By definition of $\cD_{\epsilon,E},$ for any $i\in\cY,$ the vector 
$(\mathbi{Q}(i|1), \mathbi{Q}(i|2), \dots, \mathbi{Q}(i|k))$ is proportional to one of the vectors in $\{1,e^{\epsilon}\}^k,$ which implies that $L\le 2^k.$ 

Next we will show that for the purposes of this proof,
the original output alphabet $\cY$ can be replaced with the alphabet $\{A_1,\dots,A_L\}$ with only minor notational changes.
We briefly discuss them below, with the overall goal of writing out the pdf $f_{U|Y^n}$ as in \eqref{eq:f}-\eqref{eq:g}.

For $i\in\{1,2,\dots,L'\}$ and $y^n\in\cY^n,$ let $t'_i(y^n)$ be the number of times that symbol $i$ appears in $y^n,$
and let 
\begin{equation}\label{eq:qpr}
q'_i=\frac{1}{k}\sum_{j=1}^k \mathbi{Q}(i|j).
\end{equation}
For $i\in\{1,2,\dots,L\}$ and $j\in\{1,2,\dots,k\},$ define 
   \begin{equation}\label{eq:tq}
   \begin{aligned}
   t_i(y^n)=\sum_{a\in A_i}t'_a(y^n),\quad  q_i=\sum_{a\in A_i} q'_a,\\
   q_{ij}= \mathbi{Q}(A_i|j) = \sum_{a\in A_i}\mathbi{Q}(a|j).
   \end{aligned}
\end{equation}
By definition of the equivalence relation \eqref{eq:eqv}, we have
   \begin{equation}\label{eq:eqclass}
\frac{\mathbi{Q}(a|j)}{q'_a}=\frac{q_{ij}}{q_i} \text{~for all~} a\in A_i \text{~and~} j\in[k].
\end{equation}
For $i\in\{1,2,\dots,L'\}$ and $y^n\in\cY^n,$ let 
$$
\mathbi{v}'(y^n)=(v'_1(y^n),v'_2(y^n),\dots,v'_{L'}(y^n))=(t'_1(y^n) - n q'_1, t'_2(y^n) - n q'_2,\dots, t'_{L'}(y^n) - n q'_{L'}).
$$
Since $\sum_{i=1}^{L'} t'_i(y^n) = \sum_{i=1}^{L'} n q'_i = n,$ we have $\sum_{i=1}^{L'} v'_i(y^n)=0$ for every $y^n\in\cY^n.$
We also define the random vector
$$
\mathbi{V}'=(V'_1,V'_2,\dots,V'_{L'}):=(t'_1(Y^n) - n q'_1, t'_2(Y^n) - n q'_2,\dots, t'_{L'}(Y^n) - n q'_{L'}).$$
For $i\in[L]$ and $y^n\in\cY^n,$ let 
$$
\mathbi{v}(y^n)=(v_1(y^n),v_2(y^n),\dots,v_L(y^n))=(t_1(y^n) - n q_1, t_2(y^n) - n q_2,\dots, t_L(y^n) - n q_L).
$$
Similarly, $\sum_{i=1}^L v_i(y^n)=0$ for every $y^n\in\cY^n.$ Moreover, by definition,
\begin{equation}\label{eq:vi}
v_i(y^n)=\sum_{a\in A_i} v'_a(y^n) \text{~for all~} y^n\in\cY^n.
\end{equation}
We also define the random vector
$$
\mathbi{V}=(V_1,V_2,\dots,V_L)=(t_1(Y^n) - n q_1, t_2(Y^n) - n q_2,\dots, t_L(Y^n) - n q_L).
$$ 
For the simplicity of notation, from now on we will write $\mathbi{v}'(y^n)$ and $\mathbi{v}(y^n)$ as $\mathbi{v}'$ and $\mathbi{v},$ respectively. Similarly, we abbreviate $v'_i(y^n)$ and $v_i(y^n)$ as $v'_i$ and $v_i,$ respectively.
In Proposition~\ref{prop:fg} below, we will show that in order to prove the lower bound in Theorem~\ref{Thm:Main}, we only need the quantities $q_i,v_i,q_{ij},i\in[L],j\in[k]$. Therefore abusing notation, we will write $\cY$ as $\{A_1,\dots,A_L\}$ and remove $L'$ and all the quantities associated with it from consideration in most parts of this proof.

Next let us switch our attention to the pdf $f_{U|Y^n}$.  Given $\mathbi{p}=(p_1,\dots,p_k)\in\Delta_k,$ we define
a $(k-1)$-dimensional vector $\mathbi{u}=(u_1,u_2,\dots,u_{k-1})=(p_1-1/k,p_2-1/k,\dots,p_{k-1}-1/k)$
and let $\mathbi{U}=(U_1,\dots,U_{k-1})$ be the corresponding random vector.
Clearly, $p_k=1/k-\sum_{i=1}^{k-1} u_i.$ As a result, estimating $\mathbi{p}$ is equivalent to estimating $(u_1,u_2,\dots,u_{k-1},-\sum_{i=1}^{k-1} u_i),$ so from now on we will focus on estimating the latter. 
Given $\mathbi{u}\in\mathbb{R}^{k-1}$, define $u_k(\mathbi{u})=-\sum_{i=1}^{k-1} u_i$ and define the corresponding random variable
$U_k(\mathbi{U})=-\sum_{i=1}^{k-1} U_i.$ Below we write these quantities as $u_k$ and $U_k$ respectively for simplicity of notation.

We only consider distributions of $\mathbi{U}$ supported on a small ellipsoid centered at $(\frac{1}{k},\frac{1}{k},\dots,\frac{1}{k}).$ We use Bayes estimation loss to bound below the minimax estimation loss $r_{k,n}^{\ell_2^2} (\mathbi{Q}).$ More specifically, 
we assume that the random vector $\mathbi{U}$ is uniformly distributed over the ellipsoid
$
B_1=B ( \frac{1}{n^{5/13}}).
$

\vspace*{.1in}
We have the following proposition.
\begin{proposition}\label{prop:fg} Assume that $\mathbi{U}$ is distributed over the ellipsoid $B_1$ and let $f_{\mathbi{U},Y^n}$ be
the density of the joint distribution.
For $\mathbi{u}\in B_1$ we have $f_{\mathbi{U},Y^n}(\mathbi{u},y^n)=C_{\mathbi{v}'}\exp(g(\bfu,y^n)),$
where
   \begin{equation}\label{eq:defg}
g(\mathbi{u},y^n) = \sum_{i=1}^L  \Big( (n q_i + v_i)
\log \Big( 1 + \sum_{j=1}^k \frac{u_j q_{ij}}{q_i} \Big) \Big),
\end{equation}
and $C_{\mathbi{v}'}=\frac{1}{\text{\rm Vol}(B_1)} \prod_{i=1}^{L'} (q'_i)^{n q'_a + v'_a}$.
\end{proposition}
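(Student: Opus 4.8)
The plan is to compute the joint density $f_{\mathbi{U},Y^n}$ directly from first principles and then massage the likelihood into the claimed exponential form. Since $\mathbi{U}$ is uniform on $B_1$, its prior density is the constant $1/\text{Vol}(B_1)$ on $B_1$. Hence $f_{\mathbi{U},Y^n}(\mathbi{u},y^n) = \frac{1}{\text{Vol}(B_1)} P(Y^n = y^n \mid \mathbi{U} = \mathbi{u})$ for $\mathbi{u} \in B_1$. The conditional law of a single privatized sample is $P(Y = a \mid \mathbi{U} = \mathbi{u}) = \sum_{j=1}^k p_j \mathbi{Q}(a|j)$ with $p_j = \frac 1k + u_j$ (and $p_k = \frac 1k + u_k$, $u_k = -\sum_{j<k} u_j$), so over the i.i.d.\ sample the likelihood factors as $\prod_{a=1}^{L'} \big(\sum_{j=1}^k (\frac 1k + u_j)\mathbi{Q}(a|j)\big)^{t'_a(y^n)}$.

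Next I would pass to the merged alphabet. Using \eqref{eq:eqclass}, each factor for $a \in A_i$ satisfies $\sum_{j} (\frac 1k + u_j)\mathbi{Q}(a|j) = \frac{q'_a}{q_i}\sum_j(\frac 1k + u_j) q_{ij} = q'_a\big(1 + \sum_j \frac{u_j q_{ij}}{q_i}\big)$, where I used $\frac 1k\sum_j q_{ij} = q_i$. Raising to the power $t'_a(y^n)$ and taking the product over all $a \in A_i$ and then over $i \in [L]$, the $\mathbi{u}$-free prefactors collect into $\prod_{a=1}^{L'}(q'_a)^{t'_a(y^n)} = \prod_{a=1}^{L'}(q'_a)^{nq'_a + v'_a}$ (absorbing this, together with $1/\text{Vol}(B_1)$, into $C_{\mathbi{v}'}$), while the $\mathbi{u}$-dependent part becomes $\prod_{i=1}^L\big(1 + \sum_j \frac{u_j q_{ij}}{q_i}\big)^{\sum_{a\in A_i} t'_a(y^n)} = \prod_{i=1}^L\big(1 + \sum_j \frac{u_j q_{ij}}{q_i}\big)^{t_i(y^n)}$ by \eqref{eq:tq}. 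Writing $t_i(y^n) = nq_i + v_i$ and taking logarithms in the exponent yields exactly $\exp(g(\mathbi{u},y^n))$ with $g$ as in \eqref{eq:defg}.

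The only point requiring a little care is well-definedness: one must check that for $\mathbi{u} \in B_1$ the arguments $1 + \sum_j \frac{u_j q_{ij}}{q_i}$ are strictly positive (so the logarithm makes sense), which follows because $1 + \sum_j \frac{u_j q_{ij}}{q_i} = \frac 1{q_i}\sum_j p_j q_{ij} = \frac{1}{q_i}P(Y \in A_i \mid \mathbi{U}=\mathbi{u}) > 0$ once $\mathbi{p} = \mathbi{p}_U + \mathbi{u}$ is a genuine probability vector, and for $n$ large the radius $n^{-5/13}$ of $B_1$ is small enough to guarantee $p_j > 0$ for all $j$. I do not expect any serious obstacle here; the proposition is essentially a bookkeeping lemma that records the likelihood in the normalization needed for \eqref{eq:f}--\eqref{eq:g}, and the merging identity \eqref{eq:eqclass} is the key algebraic step that makes the $q'_a$ prefactors separate cleanly from the $\mathbi{u}$-dependent factors.
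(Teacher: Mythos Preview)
Your proposal is correct and follows essentially the same approach as the paper's proof: write the joint density as prior times likelihood, factor each term $\sum_j p_j\mathbi{Q}(a|j)$ using the equivalence-class identity \eqref{eq:eqclass}, separate the $\mathbi{u}$-free prefactors $(q'_a)^{t'_a}$ into $C_{\mathbi{v}'}$, and collapse the remaining $\mathbi{u}$-dependent factors over each class $A_i$ to obtain $\exp(g(\mathbi{u},y^n))$. The only cosmetic difference is that the paper first passes to $\exp(\sum\log(\cdot))$ and then applies \eqref{eq:eqclass} inside the logarithm, whereas you apply \eqref{eq:eqclass} at the level of the product and take logs at the end; your added remark on positivity of the log arguments is a welcome clarification the paper leaves implicit.
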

\begin{proof}
We can write the joint distribution density function of the random vectors $\mathbi{U}$ and $Y^n$ as follows:
$$
f_{\mathbi{U},Y^n}(\mathbi{u},y^n)=  \begin{cases}
\frac{1}{C_0} \prod_{i=1}^{L'} (q'_i + \sum_{j=1}^k u_j \mathbi{Q}(i|j))^{t_i'(y^n)} & \mbox{~if~} \mathbi{u}\in B_1 \\
0 & \mbox{~if~}  \mathbi{u}\notin B_1
\end{cases},
$$
where $C_0$ is the volume of $B_1.$ For $\mathbi{u}\in B_1,$ we have
\begin{align*}
f_{\mathbi{U},Y^n}(\mathbi{u},y^n)
& =  \frac{1}{C_0} \prod_{i=1}^{L'} {(q'_i)}^{n q'_a + v'_a} 
\prod_{i=1}^{L'}  \Big(1 + \sum_{j=1}^k \frac{u_j \mathbi{Q}(i|j)}{q'_i}\Big)^{n q'_a + v'_a} \\
& = C_{\mathbi{v}'} \prod_{i=1}^{L'} \exp\Big\{  (n q'_a + v'_a)
\log \Big( 1 + \sum_{j=1}^k \frac{u_j \mathbi{Q}(i|j)}{q'_i} \Big) \Big\} \\
& = C_{\mathbi{v}'}  \exp\Big( \sum_{i=1}^{L'} \Big( (n q'_a + v'_a)
\log \Big( 1 + \sum_{j=1}^k \frac{u_j \mathbi{Q}(i|j)}{q'_i} \Big) \Big) \Big) \\
& = C_{\mathbi{v}'}  \exp\Big( \sum_{i=1}^L \sum_{a \in A_i} \Big( (n q'_a + v'_a)
\log \Big( 1 + \sum_{j=1}^k \frac{u_j \mathbi{Q}(a|j)}{q'_a} \Big) \Big) \Big) \\
& = C_{\mathbi{v}'}  \exp\Big( \sum_{i=1}^L \Big(  (n q_i + v_i)
\log \Big( 1 + \sum_{j=1}^k \frac{u_j q_{ij}}{q_i}  \Big) \Big) \Big),
\end{align*}
where $C_{\mathbi{v}'}=\frac{1}{C_0} \prod_{i=1}^{L'} (q'_i)^{n q'_a + v'_a}$ is a constant
that depends on $\mathbi{v}'$ but not on $\mathbi{u}$, and 
the last equality follows from \eqref{eq:eqclass} and \eqref{eq:vi}. 
\end{proof}

\subsection{Approximating $g(\mathbi{u},y^n)$}\label{sect:approx}
As already said, we will assume that the output alphabet of $\mathbi{Q}$ has the form $\{A_1,A_2,\dots,A_L\}$ and will use 
the auxiliary quantities (the composition, etc.) associated with it according to their definitions in \eqref{eq:tq} and \eqref{eq:vi}.
Let
\begin{equation}\label{eq:defg2}
g_2(\mathbi{u},y^n) := \sum_{i=1}^L  v_i \sum_{j=1}^k \frac{u_j q_{ij}}{q_i}
-\frac{1}{2} \sum_{i=1}^L  n q_i  \Big(\sum_{j=1}^k \frac{u_j q_{ij}}{q_i} \Big)^2.
\end{equation}
Further, let $E_1\subseteq\cY^n$ be defined as follows:
 $$
E_1= \Big\{y^n : \sum_{i=1}^L |v_i| <  2 k n^{8/13} \Big\}.
 $$
We will show that when $n$ is large, the difference between $g(\mathbi{u},y^n)$ and $g_2(\mathbi{u},y^n)$ is small for all $\mathbi{u}\in B_1$ and  $y^n\in E_1$.
\begin{proposition} \label{prop:g-g2} Let $g(\mathbi{u},y^n)$ be as defined in \eqref{eq:defg}.
 Then there is an integer $N(k,\epsilon)$ such that for every $y^n\in E_1,\mathbi{u}\in B_1$ and $n>N(k,\epsilon)$,
 \begin{equation}\label{eq:||}
| g(\mathbi{u},y^n) - g_2(\mathbi{u},y^n)|
< \frac{2k^3}{n^{2/13}} 
 .
\end{equation}
Consequently, for all such $y^n,\mathbi u$ and $n$,
\begin{equation}\label{eq:E1}
| \exp(g(\mathbi{u},y^n)) - \exp(g_2(\mathbi{u},y^n)) |\le  \frac{4k^3}{n^{2/13}}   \exp(g_2(\mathbi{u},y^n)). 
\end{equation}
\end{proposition}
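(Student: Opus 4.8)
The plan is to bound the pointwise difference $g(\mathbi{u},y^n)-g_2(\mathbi{u},y^n)$ by controlling the third- and higher-order terms in the Taylor expansion of $\log(1+x)$ around $x=0$, applied with $x = x_i := \sum_{j=1}^k u_j q_{ij}/q_i$ for each $i\in[L]$. First I would record the size estimates that make everything work: for $\mathbi{u}\in B_1$ we have $\sum_{j=1}^k|u_j|\le \sqrt{k}\,\|\mathbi u\|_2 = O(n^{-5/13})$ (using $u_k=-\sum_{j<k}u_j$ and the definition \eqref{eq:ellipsoid} of $B_1$), and since $0\le q_{ij}\le q_i k$ — more precisely $q_{ij}/q_i \le$ some constant depending only on $\epsilon$ by the structure \eqref{eq:DES} of $\cD_{\epsilon,E}$ — we get $|x_i| \le c(k,\epsilon)\, n^{-5/13}$ for all $i$, uniformly in $\mathbi Q\in\cD_{\epsilon,E}$; in particular $|x_i|<1/2$ for $n$ large. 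Also $nq_i+v_i = t_i(y^n)\ge 0$ and, on $E_1$, $\sum_i|v_i| < 2kn^{8/13}$ while $\sum_i nq_i = n$.

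Next, write $\log(1+x_i) = x_i - \tfrac12 x_i^2 + R_i$ with $|R_i|\le |x_i|^3/(1-|x_i|)\cdot\tfrac13 \le |x_i|^3$ for $|x_i|\le 1/2$ (any clean constant suffices). Then
\[
g(\mathbi{u},y^n) - g_2(\mathbi{u},y^n) = \sum_{i=1}^L (nq_i+v_i) R_i + \sum_{i=1}^L v_i\Big(-\tfrac12 x_i^2\Big),
\]
because $g_2$ keeps only $v_i x_i$ from the $v_i$-weighted part and only $-\tfrac12 (nq_i) x_i^2$ from the $nq_i$-weighted part. So I bound two terms. For the first, $\big|\sum_i (nq_i+v_i)R_i\big| \le \big(\sum_i (nq_i+|v_i|)\big)\max_i|x_i|^3 \le (n + 2kn^{8/13})\cdot (c\,n^{-5/13})^3 = O(n\cdot n^{-15/13}) = O(n^{-2/13})$. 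For the second, $\big|\sum_i v_i\cdot\tfrac12 x_i^2\big| \le \tfrac12\big(\sum_i|v_i|\big)\max_i|x_i|^2 \le \tfrac12\cdot 2kn^{8/13}\cdot c^2 n^{-10/13} = O(n^{-2/13})$. Adding these and absorbing constants into the $2k^3$ (valid for $n\ge N(k,\epsilon)$, which also swallows the $\epsilon$-dependent constants since $\epsilon$ is fixed) gives \eqref{eq:||}. The only mild subtlety is making the bound on $q_{ij}/q_i$ genuinely uniform over $\mathbi Q\in\cD_{\epsilon,E}$: from \eqref{eq:DES}, $q_{ij}$ is (a sum over a merged class, but after merging) proportional to an entry of $\{1,e^\epsilon\}^k$, so $q_{ij}\le e^\epsilon\min_{j'}q_{ij'}$ and hence $q_{ij}/q_i = q_{ij}/(\tfrac1k\sum_{j'}q_{ij'}) \le k e^\epsilon/(k) \cdot\frac{e^\epsilon}{1} $-type bound — in any case $q_{ij}/q_i\le k$ trivially and $\le$ a constant depending only on $\epsilon$ after a short computation; this is where I would be most careful.

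Finally, \eqref{eq:E1} follows from \eqref{eq:||} by the elementary inequality $|e^a - e^b|\le |a-b|\,e^{\max(a,b)} \le |a-b|\,e^{|a-b|}e^{b}$; with $a=g$, $b=g_2$, $|a-b|<2k^3 n^{-2/13}\le 1$ for $n\ge N(k,\epsilon)$, so $e^{|a-b|}\le e \le 2$, giving $|\exp(g)-\exp(g_2)|\le 2\cdot 2k^3 n^{-2/13}\exp(g_2) = 4k^3 n^{-2/13}\exp(g_2)$. The main obstacle is not conceptual but bookkeeping: ensuring every constant that appears (the bound on $q_{ij}/q_i$, the remainder constant in the $\log$ expansion, the threshold past which $|x_i|<1/2$ and $2k^3 n^{-2/13}<1$) depends only on $k$ and $\epsilon$ and not on $\mathbi Q$ or $L$, so that the single integer $N(k,\epsilon)$ genuinely works for all of $\cD_{\epsilon,E}$ — this uniformity is exactly the point flagged in Section~\ref{Sect:sketch} as the reason the classical LAN approach does not apply directly.
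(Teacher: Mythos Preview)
Your approach is essentially the paper's: expand $\log(1+x_i)$ to second order with $x_i=\sum_j u_j q_{ij}/q_i$, then bound the cubic remainder (weighted by $nq_i+v_i=t_i$, which sums to $n$) and the cross term $\tfrac12\sum_i v_i x_i^2$ separately, each giving $O(n^{-2/13})$. The paper gets the clean bound $|x_i|<k\,n^{-5/13}$ directly from $|u_j|<n^{-5/13}$ and $\sum_j q_{ij}/q_i=k$, which sidesteps your hesitation about the $\epsilon$-dependence of $q_{ij}/q_i$.

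Two small slips to fix. First, your displayed decomposition of $g-g_2$ silently drops the term $\sum_i nq_i x_i$; it is indeed zero (since $\sum_i q_{ij}=1$ for each $j$ and $\sum_j u_j=0$), but this needs to be said --- the paper does so explicitly. Second, in deriving \eqref{eq:E1} you write ``$e^{|a-b|}\le e\le 2$'', and $e\le 2$ is false. Either require $|a-b|\le \ln 2$ (just take $N(k,\epsilon)$ a bit larger), or use the paper's route $|e^x-1|\le 2|x|$ for $|x|\le 1/2$ applied to $x=g-g_2$.
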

\begin{proof}
{\bf 1.} We begin with approximating $g(\mathbi{u},y^n)$ as follows:
$$
g_1(\mathbi{u},y^n) = \sum_{i=1}^L   (n q_i + v_i)
 \Big(  \sum_{j=1}^k \frac{u_j q_{ij}}{q_i} -\frac{1}{2} \Big(\sum_{j=1}^k \frac{u_j q_{ij}}{q_i} \Big)^2 \Big) .
$$
For $\mathbi{u}\in B_1,$ we have $|u_j|<\frac{1}{n^{5/13}}$ for all $j\in\{1,2,\dots,k\}$ (see \eqref{eq:ellipsoid}), and so $|\sum_{j=1}^k u_j q_{ij}|< \frac{1}{n^{5/13}} \sum_{j=1}^k q_{ij}.$ Using this inequality together with the definition of $q_i$ (see Table~\ref{definitions})
we obtain 
   \begin{equation}\label{eq:bdr}
\Big|\sum_{j=1}^k \frac{u_j q_{ij}}{q_i} \Big| < \frac{k}{n^{5/13}}
, \quad i=1,2,\dots,L.
\end{equation}
Given $k,\epsilon,$ there is an integer $N(k,\epsilon)$ such that for all $n>N(k,\epsilon)$ we can bound the difference of $g(\mathbi{u},y^n)$ and $g_1(\mathbi{u},y^n)$ as follows: for all  $\mathbi{u}\in B_1$
and $y^n\in\cY^n$,
\begin{align}
|g(\mathbi{u},y^n) - g_1(\mathbi{u},y^n)|
& \le \sum_{i=1}^L  \Big( (n q_i + v_i)
\Big| \log \Big( 1 + \sum_{j=1}^k \frac{u_j q_{ij}}{q_i} \Big) - 
 \Big(  \sum_{j=1}^k \frac{u_j q_{ij}}{q_i} -\frac{1}{2} \Big(\sum_{j=1}^k \frac{u_j q_{ij}}{q_i} \Big)^2 \Big) \Big| \Big) \nonumber\\
& \overset{(a)}{\le} \sum_{i=1}^L   (n q_i + v_i)
\Big|  \sum_{j=1}^k \frac{u_j q_{ij}}{q_i}  \Big|^3 \nonumber \\
& < \frac{k^3}{n^{15/13}} \sum_{i=1}^L  (n q_i + v_i) = \frac{k^3}{n^{2/13}}
, \label{eq:diff1}
\end{align}
where $(a)$ follows from Prop.~\ref{Prop:log} (Appendix \ref{ap:log}). 

\vspace*{.1in}
{\bf 2.} Multiplying out in the definition of $g_1(\mathbi{u},y^n),$ we can simplify this expression as follows:
\begin{align*}
g_1(\mathbi{u},y^n) & = n \sum_{i=1}^L \sum_{j=1}^k u_j q_{ij}
+ \sum_{i=1}^L  v_i \sum_{j=1}^k \frac{u_j q_{ij}}{q_i}
-\frac{1}{2} \sum_{i=1}^L  n q_i  \Big(\sum_{j=1}^k \frac{u_j q_{ij}}{q_i} \Big)^2
-\frac{1}{2} \sum_{i=1}^L   v_i  \Big(\sum_{j=1}^k \frac{u_j q_{ij}}{q_i} \Big)^2 \\
& = \sum_{i=1}^L  v_i \sum_{j=1}^k \frac{u_j q_{ij}}{q_i}
-\frac{1}{2} \sum_{i=1}^L  n q_i  \Big(\sum_{j=1}^k \frac{u_j q_{ij}}{q_i} \Big)^2
-\frac{1}{2} \sum_{i=1}^L   v_i  \Big(\sum_{j=1}^k \frac{u_j q_{ij}}{q_i} \Big)^2,
\end{align*}
where the second equality follows because $\sum_{i=1}^L q_{ij}=1$ for all $j=1,2,\dots,k$ and $\sum_{j=1}^k u_j=0$ (see \eqref{eq:qu}), and so $\sum_{i=1}^L \sum_{j=1}^k u_j q_{ij}=0.$

We bound the difference of $g_1(\mathbi{u},y^n)$ and $g_2(\mathbi{u},y^n)$ as follows:
\begin{equation}\label{eq:allu}
\big| g_1(\mathbi{u},y^n) - g_2(\mathbi{u},y^n) \big|
 = \frac{1}{2} \Big| \sum_{i=1}^L   v_i  \Big(\sum_{j=1}^k \frac{u_j q_{ij}}{q_i} \Big)^2 \Big| 
< \frac{k^2}{2 n^{10/13}} \|\mathbi{v}\|_1 \text{~for all~}  \mathbi{u}\in B_1,
\end{equation}
where the inequality follows from \eqref{eq:bdr}.
As an immediate consequence, for every $y^n\in E_1$,
$$
\big| g_1(\mathbi{u},y^n) - g_2(\mathbi{u},y^n) \big|
\le \frac{k^3}{n^{2/13}}  \text{~for all~}  \mathbi{u}\in B_1.
$$
Combined with \eqref{eq:diff1}, we deduce that for every $y^n \in E_1,\mathbi{u}\in B_1$
$$
\Big| g(\mathbi{u},y^n) - g_2(\mathbi{u},y^n) \Big|
< \frac{2k^3}{n^{2/13}}.
$$
Therefore, for all $y^n \in E_1,\mathbi{u}\in B_1$
\begin{equation}
\begin{aligned}
\big| \exp(g(\mathbi{u},y^n)) - \exp(g_2(\mathbi{u},y^n)) \big|
& \le  \exp(g_2(\mathbi{u},y^n)) \max\Big\{\exp\Big(\frac{2k^3}{n^{2/13}} \Big)-1,1-\exp \Big(\frac{-2k^3}{n^{2/13}} \Big) \Big\} \\
& \le  \frac{4k^3}{n^{2/13}}   \exp(g_2(\mathbi{u},y^n)),
\end{aligned}
\end{equation}
where the last inequality holds for all $n\ge N(k,\epsilon)$ for a suitably chosen $N(k,\epsilon)$, and it follows from the fact that $|e^x-1|\le 2|x|$ for all $x\le 1/2$. 
\end{proof}

Next we show that $P(Y^n \in E_1)$ is close to $1$ when $n$ is large enough.
\begin{proposition}
There is an integer $N(k,\epsilon)$ such that for all $n>N(k,\epsilon)$,
\begin{equation}\label{eq:diffp}
P(Y^n \in E_1) 
> 1 - \frac{1}{n^{1/13}}.
\end{equation}
\end{proposition}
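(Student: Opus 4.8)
The plan is to condition on the value of $\mathbi{U}$, bound $P(Y^n\notin E_1\mid\mathbi{U}=\mathbi{u})$ uniformly over $\mathbi{u}\in B_1$, and then integrate over the uniform prior on $B_1$. Conditional on $\mathbi{U}=\mathbi{u}$ the samples $Y^{(1)},\dots,Y^{(n)}$ are i.i.d.\ with $P(Y^{(1)}\in A_i\mid\mathbi{u})=m_i:=q_i+\sum_{j=1}^k u_jq_{ij}$, so $t_i(Y^n)$ is $\mathrm{Binomial}(n,m_i)$. I would split each $V_i=t_i(Y^n)-nq_i$ into its conditional mean and its fluctuation: setting $\tilde V_i:=t_i(Y^n)-nm_i$ one has $V_i=\tilde V_i+n\sum_{j=1}^k u_jq_{ij}$, and therefore by the triangle inequality
\[
\sum_{i=1}^L|V_i|\;\le\;\sum_{i=1}^L|\tilde V_i|\;+\;n\sum_{i=1}^L\Big|\sum_{j=1}^k u_jq_{ij}\Big|.
\]

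First I would dispose of the deterministic (bias) term. Using $\sum_{i=1}^L q_{ij}=1$ for every $j$ (see \eqref{eq:qu}) and $|u_j|<n^{-5/13}$ for $\mathbi{u}\in B_1$, we obtain $n\sum_{i=1}^L|\sum_{j=1}^k u_jq_{ij}|\le n\sum_{j=1}^k|u_j|\sum_{i=1}^L q_{ij}=n\sum_{j=1}^k|u_j|<kn^{8/13}$, with strict inequality since there are $k\ge1$ summands. Consequently, on the event $\{\sum_{i=1}^L|\tilde V_i|\le kn^{8/13}\}$ one has $\sum_{i=1}^L|V_i|<2kn^{8/13}$, i.e.\ $y^n\in E_1$. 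Hence it suffices to show $P(\sum_{i=1}^L|\tilde V_i|>kn^{8/13}\mid\mathbi{u})<n^{-1/13}$ for all $\mathbi{u}\in B_1$ and $n$ large enough.

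For the fluctuation term, $\tilde V_i$ conditioned on $\mathbi{u}$ is a centered $\mathrm{Binomial}(n,m_i)$ variable, so by Cauchy--Schwarz $\mathbb{E}[|\tilde V_i|\mid\mathbi{u}]\le\sqrt{\mathbb{E}[\tilde V_i^2\mid\mathbi{u}]}=\sqrt{nm_i(1-m_i)}\le\sqrt{nm_i}$. Summing over $i$ and applying Cauchy--Schwarz once more with $\sum_{i=1}^L m_i=1$ and $L\le 2^k$ gives $\mathbb{E}[\sum_{i=1}^L|\tilde V_i|\mid\mathbi{u}]\le\sqrt n\sum_{i=1}^L\sqrt{m_i}\le\sqrt{nL}\le 2^{k/2}\sqrt n$. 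Markov's inequality then yields
\[
P\Big(\sum_{i=1}^L|\tilde V_i|>kn^{8/13}\ \Big|\ \mathbi{u}\Big)\;\le\;\frac{2^{k/2}\sqrt n}{kn^{8/13}}\;=\;\frac{2^{k/2}}{k}\,n^{-3/26},
\]
which is strictly below $n^{-1/13}=n^{-2/26}$ as soon as $n>(2^{k/2}/k)^{26}$; take $N(k,\epsilon)$ to be any integer exceeding this value. Every estimate above is uniform over $\mathbi{u}\in B_1$ and over $\mathbi{Q}\in\cD_{\epsilon,E}$, the latter because $L\le 2^k$ and $\sum_i m_i=1$ hold regardless of $\mathbi{Q}$; so integrating over the prior on $B_1$ gives $P(Y^n\in E_1)>1-n^{-1/13}$, which is \eqref{eq:diffp}.

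This proposition is essentially routine once the bias/fluctuation split is set up; the only point needing a little care is to choose that split so that the crude $\ell_1$-Markov bound on the fluctuation still beats $n^{-1/13}$, which works precisely because the exponent $8/13$ exceeds $1/2$ (so that $\sqrt n=o(n^{8/13})$), together with keeping the constants uniform in $\mathbi{Q}$, which here is automatic from $L\le 2^k$.
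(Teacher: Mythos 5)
Your proof is correct, and it follows the same skeleton as the paper's: condition on $\mathbi{U}=\mathbi{u}$, split $V_i$ into the deterministic shift $n\sum_j u_jq_{ij}$ (whose $\ell_1$ contribution is bounded by $kn^{8/13}$ exactly as in the paper) plus the centered binomial fluctuation, and integrate over the prior on $B_1$. The one genuine difference is the tool used to control the fluctuation term. The paper applies Hoeffding's inequality coordinatewise to the events $\{|V_i-\sum_j nu_jq_{ij}|\ge kn^{8/13}/2^k\}$ and takes a union bound over the $L\le 2^k$ symbols, obtaining an exponentially small failure probability $2^{k+1}\exp(-k^2n^{3/13}/2^{2k-1})$. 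You instead bound $\mathbb{E}\sum_i|\tilde V_i|\le\sqrt{nL}\le 2^{k/2}\sqrt n$ via two applications of Cauchy--Schwarz and invoke Markov's inequality, which yields only a polynomially small bound $(2^{k/2}/k)\,n^{-3/26}$. Since the target threshold is $n^{-1/13}=n^{-2/26}$, your bound clears it with a margin of $n^{-1/26}$, so the argument closes, and it is uniform in $\mathbi{Q}\in\cD_{\epsilon,E}$ for the same reason as the paper's ($L\le 2^k$ and $\sum_i m_i=1$). What you lose is the exponential tail, which is irrelevant here since the proposition only asks for $n^{-1/13}$; what you gain is a slightly more elementary argument that avoids Hoeffding entirely. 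Either version is acceptable for the role this proposition plays in the larger proof.
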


\begin{proof}
Given $\mathbi{u}\in B_1$, define the event$$
E_{\mathbi{u}}= \bigcap_{i=1}^L \Big\{y^n : \Big| v_i - \sum_{j=1}^k n u_j q_{ij} \Big| < \frac{k n^{8/13}}{2^k} \Big\}.
$$
 (Recall that $\mathbi{v}$ is a function of $y^n$).
To prove the proposition, we first show that 
$E_{\mathbi{u}} \subseteq E_1$ for all $\mathbi{u}\in B_1$. Then we show that
$P(Y^n\in (E_{\mathbi{u}})^c|\mathbi{U}=\mathbi{u})$ is small for all $\mathbi{u}\in B_1$.

By the triangle inequality, we have
\begin{align*}
\|\mathbi{v}\|_1 & \le  \sum_{i=1}^L \Big| \sum_{j=1}^k n u_j q_{ij} \Big| +
\sum_{i=1}^L \Big|v_i - \sum_{j=1}^k n u_j q_{ij}  \Big|.
\end{align*}
Further,
$$
\sum_{i=1}^L \Big| \sum_{j=1}^k n u_j q_{ij} \Big| \le \sum_{i=1}^L \sum_{j=1}^k n |u_j| q_{ij} \le n^{8/13} \sum_{i=1}^L \sum_{j=1}^k  q_{ij}   =kn^{8/13},
$$
and trivially,
$$ \sum_{i=1}^L \Big|v_i - \sum_{j=1}^k n u_j q_{ij}  \Big|\le 2^k \max_{i\in[L]} \Big| v_i - \sum_{j=1}^k n u_j q_{ij} \Big|  .     
$$
Combining these estimates, we obtain
  $$
  \|\mathbi{v}\|_1\le kn^{8/13} +2^k \max_{1\le i\le L} \Big| v_i - \sum_{j=1}^k n u_j q_{ij}\Big|\text{~for all~}  \mathbi{u}\in B_1.
  $$
As a result, for all $\mathbi{u}\in B_1$
\begin{equation}\label{eq:cond1}
E_{\mathbi{u}} \subseteq E_1.
\end{equation}
Note that, conditional on $\mathbi{U}=\mathbi{u},$ the random variable $t_i(Y^n)$ has binomial distribution $B(n, q_i + \sum_{j=1}^k u_j q_{ij})$. Using Hoeffding's inequality, for every $\mathbi{u}\in B_1, i=1,\dots,L$ we have
\begin{align}
P \Big\{ \Big| V_i - \sum_{j=1}^k n u_j q_{ij} \Big| \ge \frac{k n^{8/13}}{2^k} \,\Big|\, \mathbi{U}=\mathbi{u} \Big\}
& = P \Big\{ \Big| \frac{t_i(Y^n)}{n} - \Big(q_i + \sum_{j=1}^k u_j q_{ij} \Big) \Big| \ge \frac{k }{2^k n^{5/13}} \,\Big|\, \mathbi{U}=\mathbi{u} \Big\}\nonumber \\
& \le 2 \exp\Big(- \frac{k^2 n^{3/13}}{2^{2k-1}}\Big).\label{eq:cond2}
\end{align}
Combining \eqref{eq:cond1} and \eqref{eq:cond2}, for every $\mathbi{u}\in B_1$ we have
    \begin{align*}
P(Y^n\in (E_1)^c|\mathbi{U}=\mathbi{u}) & \le
      P(Y^n\in (E_{\mathbi{u}})^c|\mathbi{U}=\mathbi{u}) \\
&  \overset{(a)}{\le} \sum_{i=1}^L  P \Big\{ \Big| V_i - \sum_{j=1}^k n u_j q_{ij} \Big| \ge \frac{k n^{8/13}}{2^k} \,\Big|\, \mathbi{U}=\mathbi{u} \Big\} \\
& \le 2L \exp\Big\{- \frac{k^2 n^{3/13}}{2^{2k-1}}\Big\} \\
& \overset{(b)}{\le} 2^{k+1} \exp\Big\{- \frac{k^2 n^{3/13}}{2^{2k-1}}\Big\}
\end{align*}
where $(a)$ follows from the union bound and $(b)$ follows from the fact that $L\le 2^k.$
Therefore,
$$
P(Y^n \in E_1) 
\ge 1 - 2^{k+1} \exp\Big\{- \frac{k^2 n^{3/13}}{2^{2k-1}}\Big\}
> 1 - \frac{1}{n^{1/13}},
$$
where the last inequality holds for all $n\ge N(k,\epsilon)$ for a suitably chosen $N(k,\epsilon)$.
\end{proof}

\subsection{Gaussian approximation to the posterior distribution }
The expression \eqref{eq:defg2} for the function $g_2(\mathbi{u},y^n)$ can also be written as
\begin{align*}
g_2(\mathbi{u},y^n) = - \sum_{i=1}^L \frac{n}{2q_i} \Big(\sum_{j=1}^k u_j q_{ij} - \frac{v_i}{n} \Big)^2
 + \sum_{i=1}^L \frac{v_i^2}{2n q_i}.
\end{align*}
Given $\mathbi{v} \in \mathbb{R}^L,$ let us define a function $h_{\mathbi{v}}: \mathbb{R}^{k-1} \to \mathbb{R}$ and a constant $C_{\mathbi{v}}$ as
\begin{equation}\label{eq:hvcv}
h_{\mathbi{v}}(\mathbi{u}) = \sum_{i=1}^L \frac{n}{q_i} \Big(\sum_{j=1}^k u_j q_{ij} - \frac{v_i}{n} \Big)^2 \text{~for all~} \mathbi{u} \in \mathbb{R}^{k-1}, \quad
C_{\mathbi{v}}=\exp\Big(\sum_{i=1}^L \frac{v_i^2}{2n q_i} \Big).
\end{equation}
(see \eqref{eq:defh}; note that $C_\mathbi{v}$ does not depend on $\mathbi{u}$). Using this notation, we have
\begin{gather}\label{eq:g2}
g_2(\mathbi{u},y^n) = - \frac{1}{2} h_{\mathbi{v}}(\mathbi{u})
 + \sum_{i=1}^L \frac{v_i^2}{2n q_i}, \\
\exp(g_2(\mathbi{u},y^n)) = C_{\mathbi{v}} \exp ( -  h_{\mathbi{v}}(\mathbi{u})/2 ).\nonumber
\end{gather}
In order to estimate $\mathbi{u}$ from $y^n,$ we need to find the conditional distribution $f_{\mathbi{U}|Y^n}(\mathbi{u}|y^n).$ As a first step, we need to calculate 
$$
P_{Y^n}(y^n)= \int_{\mathbb{R}^{k-1}} f_{\mathbi{U},Y^n}(\mathbi{u},y^n) d\mathbi{u}
=C_{\mathbi{v}'} \int_{ B_1}   \exp(g(\mathbi{u},y^n)) d\mathbi{u},
$$
where $C_{\mathbi{v}'}$ is defined in Prop. \ref{prop:fg};
however this appears difficult (while it is possible to find the asymptotics of this integral using, for instance,
the multi-dimensional version of the Laplace method \cite{Wong2001}, controlling the error terms presents a problem, so we proceed in an ad-hoc way). According to \eqref{eq:E1} and \eqref{eq:diffp}, when $n$ is sufficiently large, with large probability the ratio between $\exp(g_2(\mathbi{u},y^n))$ and $\exp(g(\mathbi{u},y^n))$ is very close to $1.$ So we can use the following integral to approximate $P_{Y^n}(y^n)$:
\begin{equation}\label{eq:defG}
G(y^n)  = C_{\mathbi{v}'}  \int_{ B_1} \exp(g_2(\mathbi{u},y^n)) d\mathbi{u} = C_{\mathbi{v}'}  C_{\mathbi{v}}  \int_{ B_1} 
\exp\Big( - \frac{1}{2} h_{\mathbi{v}}(\mathbi{u}) \Big) d\mathbi{u}.
\end{equation}
The integrand in \eqref{eq:defG} is proportional to the probability density function of some Gaussian distribution. We will make use of this property to obtain an approximation to $G(y^n)$, which is in turn an approximation of $P_{Y^n}(y^n).$

Define 
\begin{equation}\label{eq:defdt}
\delta=\delta(\mathbi{Q})=\min_{\mathbi{u}\in\mathbb{R}^{k-1}: \sum_{i=1}^k u_i^2=1}
\Big(\sum_{i=1}^L \frac{1}{q_i} \Big(\sum_{j=1}^k u_j q_{ij} \Big)^2\Big)^{1/2}
\end{equation}
(repeated here from \eqref{eq:skt2}). Note that we are minimizing a continuous function over a compact set, so the minimum is attained.
It is easy to verify that for all $\mathbi{u}\in\mathbb{R}^{k-1}$
\begin{equation}\label{eq:defdelta}
\Big(\sum_{i=1}^L \frac{1}{q_i} \Big(\sum_{j=1}^k u_j q_{ij} \Big)^2\Big)^{1/2}
\ge \delta \Big(\sum_{i=1}^k u_i^2\Big)^{1/2}.
\end{equation}
Given $k$ and $\epsilon$, let define a constant
\begin{equation}\label{eq:defd0}
\delta_0 = \delta_0(k,\epsilon) := \sqrt{ \frac{1}{32M(k,\epsilon)}}
\end{equation}
where $M(k,\epsilon)$ is given by \eqref{eq:Mke}. 
The remainder of the proof of Theorem \ref{Thm:Main} depends on whether $\delta\ge \delta_0$ or not.
We divide the proof into these two cases because without this division, we can only prove a weaker version of \eqref{eq:indv}, where the constants $C(k,\epsilon)$ and $N(k,\epsilon)$ are replaced with constants that depend on $\mathbi{Q}$.

\subsection{\bf Case 1: $\delta\ge \delta_0$}

We use the Bayes estimation loss to bound below $r_{k,n}^{\ell_2^2} (\mathbi{Q}).$ It is well known that under $\ell_2^2$ loss, the optimal Bayes estimator for $u_i$ is $\mathbb{E}(U_i|Y^n)$. Therefore, the optimal Bayes estimation loss is 
  \begin{equation}\label{eq:BEL}
\sum_{i=1}^k \mathbb{E}(U_i - \mathbb{E}(U_i|Y^n))^2,
   \end{equation}
      and this is equal to the sum of the variances of the posterior distributions of $U_i$ given $Y^n$.
The main idea is to approximate the posterior distribution of $\mathbi{U}$ given $Y^n$ by a multivariate Gaussian distribution. More specifically, we define a $(k-1) \times (k-1)$ matrix $\Phi(n,\mathbi{Q}):=\sum_{i=1}^L \mathbi{z}_i^T\mathbi{z}_i,$  where
\begin{equation}\label{eq:altPhi}
\mathbi{z}_i=\sqrt{\frac{n}{q_i}}(q_{i,1}-q_{i,k}, q_{i,2}-q_{i,k}, \dots,q_{i,k-1}-q_{i,k}).
\end{equation}
Equivalently, $\Phi(n,\mathbi{Q})$ is defined by its associated quadratic form as follows:
\begin{equation}\label{eq:defPhi}
\mathbi{u}^T \Phi(n,\mathbi{Q}) \mathbi{u} =
\sum_{i=1}^L \frac{n}{q_i} \Big(\sum_{j=1}^k u_j q_{ij} \Big)^2
= \sum_{i=1}^L \frac{n}{q_i} \Big(\sum_{j=1}^{k-1} u_j (q_{ij}-q_{ik}) \Big)^2
\text{~for all~} \mathbi{u}\in\mathbb{R}^{k-1}.
\end{equation}
Since $\delta\ge \delta_0>0,$ we have $\mathbi{u}^T \Phi(n,\mathbi{Q}) \mathbi{u}>0$ for all $\mathbi{u} \neq \mathbi{0},$ 
which shows that $\Phi(n,\mathbi{Q})$ is positive definite. For simplicity of notation, we write it as $\Phi,$ omitting the
arguments. We will show that when $n$ is large enough, there is a set $E_2 \subseteq \cY^n$ with $P(Y^n \in E_2)$ close to $1,$ 
such that conditional on every $y^n \in E_1 \cap E_2,$ the covariance matrix of $\mathbi{U}$ is close to $\Phi^{-1}.$
This will enable us to conclude that $\sum_{i=1}^k \mathbb{E}(U_i - \mathbb{E}(U_i|Y^n))^2$
can be approximated as $\tr(\Phi^{-1}) + \mathbi{1}^T \Phi^{-1} \mathbi{1}$, where $\mathbi{1}$ is the all-ones column vector of length $k-1$.

More precisely, our proof of Case 1 consists of two steps, summarized in the following proposition. 
\begin{proposition}\label{prop:Case1}
There are two positive constants $C_3(k,\epsilon),C_4(k,\epsilon)$ and an integer $N(k,\epsilon)$ such that when $n \ge N(k,\epsilon)$,
the following lower bound holds for all $\mathbi{Q}\in \cD_{\epsilon,E}$:
\begin{equation}\label{eq:ob1} 
r_{k,n}^{\ell_2^2} (\mathbi{Q}) \ge
\Big(1  - \frac{C_4(k,\epsilon)}{n^{1/13}} \Big)
 ( \tr(\Phi^{-1}) + \mathbi{1}^T \Phi^{-1} \mathbi{1} ) 
 -  \frac{C_3(k,\epsilon)}{n^{14/13}} .
\end{equation}
Furthermore, for all $\mathbi{Q}\in \cD_{\epsilon,E}$ and all positive integer $n$,
   \begin{equation}\label{eq:tr}
\tr(\Phi^{-1}) + \mathbi{1}^T \Phi^{-1} \mathbi{1}
\ge \frac 1n M(k,\epsilon).
   \end{equation}
\end{proposition}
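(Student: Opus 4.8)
The plan is to establish the two displays separately; the bound \eqref{eq:ob1} is the substantial one, and \eqref{eq:tr} is essentially algebra.

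\textbf{Lower bound \eqref{eq:ob1}.} I would carry out the program of Section~\ref{Sect:sketch} with every error term made explicit and, crucially, uniform in $\mathbi{Q}$. Bounding $r_{k,n}^{\ell_2^2}(\mathbi{Q})$ below by the Bayes risk for the prior $\mathbi{U}\sim\mathrm{Unif}(B_1)$, and using that the posterior mean is the optimal $\ell_2^2$ estimator together with $u_k=-\mathbi{1}^T\mathbi{u}$, this Bayes risk equals $\mathbb{E}_{Y^n}[\tr\mathrm{Cov}(\mathbi{U}\mid Y^n)+\mathbi{1}^T\mathrm{Cov}(\mathbi{U}\mid Y^n)\mathbi{1}]$. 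I would introduce the event $E_2=\{y^n:\exists\,\tilde{\mathbi{u}}\in B_2\text{ with }h_{\mathbi{v}}(\tilde{\mathbi{u}})<n^{1/13}\}$; conditioning on $\mathbi{U}=\tilde{\mathbi{u}}\in B_2$ one has $\mathbb{E}\,h_{\mathbi{v}}(\tilde{\mathbi{u}})=\frac1n\sum_i\frac1{q_i}\Var(t_i(Y^n))=O(1)$ (the $q_i$ in the denominator is cancelled by $\Var(t_i(Y^n))\approx nq_i$, and there are at most $2^k$ terms), so Markov's inequality together with $P(\mathbi{U}\in B_2)=1-O(n^{-1/13})$ gives $P(Y^n\in E_2)\ge1-O(n^{-1/13})$, while $P(Y^n\in E_1)\ge1-n^{-1/13}$ by \eqref{eq:diffp}. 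On $E_1\cap E_2$ I would combine Proposition~\ref{prop:g-g2} (which gives $\exp g=(1+O(n^{-2/13}))\exp g_2$ uniformly on $B_1$) with $\exp(g_2(\mathbi{u},y^n))=C_{\mathbi{v}}\exp(-\tfrac12 h_{\mathbi{v}}(\mathbi{u}))$, whose right side is proportional to the density of $\cN(\Phi^{-1}\mathbi{w},\Phi^{-1})$, to see that $f_{\mathbi{U}\mid Y^n}(\cdot\mid y^n)$, viewed as a probability density on $B_1$, agrees up to a factor $1+O(n^{-2/13})$ with the truncation of this Gaussian to $B_1$. To show the Gaussian barely leaves $B_1$, I would fill in \eqref{eq:skt1}--\eqref{eq:spl3}: for $y^n\in E_2$ and any $\mathbi{u}\in B_1^c$, the triangle inequality, \eqref{eq:defdelta} and $\delta\ge\delta_0$ give $\sqrt{h_{\mathbi{v}}(\mathbi{u})}\ge2n^{1/26}$, hence $\sqrt{h_{\mathbi{v}}(\mathbi{u})}-\sqrt{h_{\mathbi{v}}(\tilde{\mathbi{u}})}>n^{1/26}$, so the multivariate Chernoff bound \eqref{eq:Chernoff} shows $\cN(\Phi^{-1}\mathbi{w},\Phi^{-1})$ assigns $B_1^c$ probability at most $\exp(-n^{1/26})$. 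In Case~1, \eqref{eq:defdelta} also yields $\Phi^{-1}\preceq(n\delta_0^2)^{-1}I$, so the Gaussian has covariance $O(1/n)$ and (on $E_1$) mean of norm $O(n^{-5/13})$, whence truncating it to $B_1$ perturbs $\tr\Phi^{-1}+\mathbi{1}^T\Phi^{-1}\mathbi{1}$ by only a super-polynomially small amount. Assembling, on $E_1\cap E_2$ one gets $\tr\mathrm{Cov}(\mathbi{U}\mid y^n)+\mathbi{1}^T\mathrm{Cov}(\mathbi{U}\mid y^n)\mathbi{1}\ge(1-O(n^{-2/13}))(\tr\Phi^{-1}+\mathbi{1}^T\Phi^{-1}\mathbi{1})-o(n^{-14/13})$; multiplying by $P(E_1\cap E_2)\ge1-O(n^{-1/13})$ and dropping the nonnegative contribution of the complement gives \eqref{eq:ob1}.

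\textbf{Lower bound \eqref{eq:tr}.} This part is purely algebraic. Let $V=\{\mathbi{x}\in\mathbb{R}^k:\mathbi{1}_k^T\mathbi{x}=0\}$ and let $H\colon\mathbb{R}^{k-1}\to V$ be the injection $H\mathbi{u}=(u_1,\dots,u_{k-1},-\sum_{j<k}u_j)$, so $H^TH=I+J$. Define $\tilde\Phi$ to be the operator on $\mathbb{R}^k$ vanishing on $V^\perp$ whose quadratic form on $V$ is $\mathbi{x}^T\tilde\Phi\mathbi{x}=\sum_i\frac{n}{q_i}(\sum_j x_jq_{ij})^2$; then by \eqref{eq:defPhi} one has $\Phi=H^T\tilde\Phi H$, and since $\tilde\Phi$ maps $V$ into $V$ this gives $\tr\Phi^{-1}+\mathbi{1}^T\Phi^{-1}\mathbi{1}=\tr((I+J)\Phi^{-1})=\tr(H\Phi^{-1}H^T)=\tr((\tilde\Phi|_V)^{-1})$. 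For $\mathbi{Q}\in\cD_{\epsilon,E}$, after the merging of Section~\ref{Sect:merge} each vector $(q_{i1},\dots,q_{ik})$ has $q_{ij}=c_ie^\epsilon$ for $j\in S_i$ and $q_{ij}=c_i$ otherwise, with $d_i=|S_i|$ and $q_i=\frac{c_i}{k}(d_ie^\epsilon+k-d_i)$; then for $\mathbi{x}\in V$ one computes $\sum_jx_jq_{ij}=c_i(e^\epsilon-1)\,\mathbi{1}_{S_i}^T\mathbi{x}$, so $\tilde\Phi|_V=nk(e^\epsilon-1)^2\,W|_V$ with $W=\sum_i w_i\mathbi{1}_{S_i}\mathbi{1}_{S_i}^T$, $w_i=c_i/(d_ie^\epsilon+k-d_i)>0$, and the normalization $\sum_iq_i=1$ becomes $\sum_i w_i(d_ie^\epsilon+k-d_i)^2=k$. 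Cauchy--Schwarz on the $(k-1)$-dimensional space $V$ gives $\tr((W|_V)^{-1})\ge(k-1)^2/\tr(W|_V)$, and
\begin{align*}
\tr(W|_V)&=\tr W-\tfrac1k\mathbi{1}_k^TW\mathbi{1}_k=\tfrac1k\sum_i w_id_i(k-d_i)\\
&=\tfrac1k\sum_i\big[w_i(d_ie^\epsilon+k-d_i)^2\big]\frac{d_i(k-d_i)}{(d_ie^\epsilon+k-d_i)^2}\le\max_{1\le d\le k-1}\frac{d(k-d)}{(de^\epsilon+k-d)^2}=\frac{d^\ast(k-d^\ast)}{(d^\ast e^\epsilon+k-d^\ast)^2},
\end{align*}
where the last two steps use $\sum_i w_i(d_ie^\epsilon+k-d_i)^2=k$ and \eqref{eq:dstar}. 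Chaining the bounds, $\tr\Phi^{-1}+\mathbi{1}^T\Phi^{-1}\mathbi{1}=\frac1{nk(e^\epsilon-1)^2}\tr((W|_V)^{-1})\ge\frac{(k-1)^2}{nk(e^\epsilon-1)^2}\cdot\frac{(d^\ast e^\epsilon+k-d^\ast)^2}{d^\ast(k-d^\ast)}=\frac1n M(k,\epsilon)$, which is \eqref{eq:tr}. (If $\Phi$ is singular the claim is vacuous.)

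\textbf{Main obstacle.} The hard part is \eqref{eq:ob1}, and within it the large-deviations step above, which must rule out the posterior --- equivalently the approximating Gaussian, whose mean $\Phi^{-1}\mathbi{w}$ need not lie near the origin --- escaping the ellipsoid $B_1$ on which the prior is supported. This is exactly where the case hypothesis $\delta\ge\delta_0$ is indispensable: it upgrades the $\ell_2$-separation \eqref{eq:skt1} between $B_1^c$ and the auxiliary ellipsoid $B_2$ into a genuine separation in the quadratic form $h_{\mathbi{v}}$, which is what the Chernoff estimate requires. The demand that all constants depend only on $k,\epsilon$ (and not on $\mathbi{Q}$) is what forces both the reduction to $L\le2^k$ symbols and the split at $\delta=\delta_0$; by contrast, \eqref{eq:tr} is essentially a one-line matrix Cauchy--Schwarz once the problem is lifted to the $k$-dimensional overparametrized picture.
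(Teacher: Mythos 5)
Your proposal is correct, and it splits naturally into two halves of different character. For \eqref{eq:ob1} you follow essentially the paper's own route: Bayes risk over the uniform prior on $B_1$, the events $E_1,E_2$ with $1-O(n^{-1/13})$ probability, the multiplicative $1+O(n^{-2/13})$ comparison of the posterior with the truncated Gaussian $\cN(\Phi^{-1}\mathbi{w},\Phi^{-1})$, and the $\delta\ge\delta_0$ hypothesis converting the $\ell_2$ gap between $B_1^c$ and $B_2$ into a gap in $\sqrt{h_{\mathbi{v}}}$ so that the Gaussian concentration bound applies; this is a faithful compression of the paper's Lemmas on $\|\mathbb{E}(\mathbi{U}|Y^n)-\Phi^{-1}\mathbi{w}\|_1$ and on the conditional second moment, including the tail integrals handled in the appendices. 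For \eqref{eq:tr}, however, your argument is genuinely different from, and cleaner than, the paper's. The paper proves the matrix inequality of Prop.~\ref{Prop:bdtr} by expanding $\mathbi{1}$ in the eigenbasis of $\Phi$ and applying a weighted Cauchy--Schwarz with the weights $(1+\alpha_i^2)$ and $(k-\alpha_i^2)$, and then separately computes $\frac{1}{k}\sum_i\Phi_{ii}-\frac{1}{k(k-1)}\sum_{i\neq j}\Phi_{ij}$ and bounds it via \eqref{eq:L}. You instead lift everything to the zero-sum hyperplane $V\subset\mathbb{R}^k$, where $\tr(\Phi^{-1})+\mathbi{1}^T\Phi^{-1}\mathbi{1}=\tr((I+J)\Phi^{-1})$ collapses to the single trace $\tr((\tilde\Phi|_V)^{-1})$, and the bound becomes the textbook inequality $\tr(M)\tr(M^{-1})\ge(\dim V)^2$ together with the same combinatorial identification of the rows of $\mathbi{Q}\in\cD_{\epsilon,E}$ (the computation $\tr(W|_V)=\frac1k\sum_iw_id_i(k-d_i)$ with the normalization $\sum_iw_i(d_ie^{\epsilon}+k-d_i)^2=k$ plays the role of the paper's \eqref{eq:rhp1}--\eqref{eq:rhp3}); I checked the identities $\Phi=H^T\tilde\Phi H$, $H\Phi^{-1}H^T=(\tilde\Phi|_V)^{-1}$, and the final numerics, and they are all correct. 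What your route buys is symmetry --- the asymmetric treatment of the $k$-th coordinate disappears, and the equality case ($\lambda_i$ all equal on $V$, i.e.\ $W|_V$ a multiple of the identity) becomes transparent, which would also streamline the uniqueness discussion around Conjecture~\ref{cj:u1}; what it costs is nothing beyond the (correct) caveat that the claim is vacuous when $\Phi$ is singular.
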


Once both \eqref{eq:ob1} and \eqref{eq:tr} are established, this will complete the proof of Theorem~\ref{Thm:Main} for Case 1 because of  \eqref{eq:Mke},\,\eqref{eq:cnlb} and \eqref{eq:indv}.

\vspace{.1in}\subsubsection{Proof of \eqref{eq:ob1}}
In order to prove inequality \eqref{eq:ob1}, we develop a refined version of the Local Asymptotic Normality approach \cite{Hajek72,Ibrag81}\footnote{A detailed discussion of the connection between our proof and LAN appears in Appendix~\ref{ap:LAN}.}. 
Consider the ellipsoid (see \eqref{eq:ellipsoid})
$$
B_2 = B \Big(  \frac{1}{n^{5/13}} - \frac{3/\delta_0}{n^{6/13}} \Big),
$$
and define a subset $E_2 \subseteq \cY^n$ as follows:
$$
E_2=\Big\{ y^n:  \exists \tilde{\mathbi{u}} \in B_2 \text{~such that~}
\sum_{i=1}^L \frac{n}{q_i} \Big(\sum_{j=1}^k \tilde{u}_j q_{ij} - \frac{v_i}{n} \Big)^2
< n^{1/13}  \Big\}.
$$
Recall our convention that $\tilde{u}_k=\tilde{u}_k(\tilde{\mathbi{u}})= -\sum_{i=1}^{k-1}\tilde{u}_i$, which stems from the fact that we are working with PMFs.
\begin{proposition}\label{Prop:E2}
 There is an integer $N(k,\epsilon)$ such that for all $n>N(k,\epsilon)$,
\begin{equation}\label{eq:pe2}
P(Y^n\in E_2) 
 \ge 1 - \frac{2^{k+1} + 3k/\delta_0}{n^{1/13}}.
\end{equation}
\end{proposition}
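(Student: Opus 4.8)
The plan is to use, as the vector $\tilde{\mathbi{u}}$ required by the definition of $E_2$, the realized value of the random vector $\mathbi{U}$ itself. If $\mathbi{U}=\tilde{\mathbi{u}}$ lies in $B_2$ and the random quantity $h_{\mathbi{v}}(\tilde{\mathbi{u}})=\sum_{i=1}^L\tfrac n{q_i}\bigl(\sum_{j=1}^k\tilde u_jq_{ij}-\tfrac{v_i}{n}\bigr)^2$ (with $\mathbi{v}=\mathbi{v}(Y^n)$) is smaller than $n^{1/13}$, then by definition $Y^n\in E_2$. Hence I would start from
\[
P(Y^n\in E_2)\ \ge\ P(\mathbi{U}\in B_2)\ -\ P\bigl(\mathbi{U}\in B_2,\ h_{\mathbi{v}}(\mathbi{U})\ge n^{1/13}\bigr),
\]
and then bound the first term below and the second term above.

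For the first term, since $\mathbi{U}$ is uniform on $B_1=B(n^{-5/13})$ and, for $n>(3/\delta_0)^{13}$, the ``radius'' of $B_2$ is positive so that $B_2\subseteq B_1$, we have $P(\mathbi{U}\in B_2)=\mathrm{Vol}(B_2)/\mathrm{Vol}(B_1)$. Because $B(\alpha)$ is the image of $B(1)$ under the dilation $\mathbi{u}\mapsto\alpha\mathbi{u}$, one has $\mathrm{Vol}(B(\alpha))=\alpha^{k-1}\mathrm{Vol}(B(1))$, so this ratio equals $\bigl(1-\tfrac{3}{\delta_0 n^{1/13}}\bigr)^{k-1}$, which by Bernoulli's inequality is at least $1-\tfrac{3k}{\delta_0 n^{1/13}}$ once $\tfrac{3}{\delta_0 n^{1/13}}\le 1$.

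For the second term, I would condition on $\mathbi{U}=\tilde{\mathbi{u}}\in B_2$. Then $t_i(Y^n)$ is binomial $B\bigl(n,\,q_i+\sum_{j=1}^k\tilde u_jq_{ij}\bigr)$ (with $\tilde u_k=-\sum_{j=1}^{k-1}\tilde u_j$), so $\sum_j\tilde u_jq_{ij}-\tfrac{v_i}{n}=\bigl(q_i+\sum_j\tilde u_jq_{ij}\bigr)-\tfrac{t_i(Y^n)}{n}$ has conditional mean $0$ and
\[
\mathbb{E}\bigl[h_{\mathbi{v}}(\tilde{\mathbi{u}})\mid\mathbi{U}=\tilde{\mathbi{u}}\bigr]=\sum_{i=1}^L\frac{1}{nq_i}\Var\bigl(t_i(Y^n)\mid\mathbi{U}=\tilde{\mathbi{u}}\bigr)\le\sum_{i=1}^L\frac{q_i+\sum_j\tilde u_jq_{ij}}{q_i}\le L\Bigl(1+\frac{k}{n^{5/13}}\Bigr),
\]
where the last step invokes \eqref{eq:bdr}. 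Since $L\le 2^k$, this is at most $2^{k+1}$ for $n\ge N(k,\epsilon)$, so Markov's inequality gives $P\bigl(h_{\mathbi{v}}(\tilde{\mathbi{u}})\ge n^{1/13}\mid\mathbi{U}=\tilde{\mathbi{u}}\bigr)\le 2^{k+1}/n^{1/13}$ uniformly in $\tilde{\mathbi{u}}\in B_2$. Integrating over $\tilde{\mathbi{u}}\in B_2$ bounds the second term by $2^{k+1}/n^{1/13}$, and combining the two estimates yields $P(Y^n\in E_2)\ge 1-\tfrac{2^{k+1}+3k/\delta_0}{n^{1/13}}$.

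I do not expect a serious obstacle: the argument is a conditional second-moment (Markov) bound wrapped in an elementary volume comparison. The point that needs care is uniformity in $\mathbi{Q}\in\cD_{\epsilon,E}$, i.e.\ that the constants are independent of $\mathbi{Q}$; this is automatic because $L\le 2^k$ for every $\mathbi{Q}\in\cD_{\epsilon,E}$ and because \eqref{eq:bdr} holds for every $\mathbi{Q}$ and every $\mathbi{u}\in B_1$, so the conditional-expectation bound $L(1+kn^{-5/13})$ is uniform. Finally, $N(k,\epsilon)$ is taken large enough that $3/(\delta_0 n^{1/13})\le1$, that $L(1+kn^{-5/13})\le 2^{k+1}$, and that the hypotheses underlying \eqref{eq:bdr} hold.
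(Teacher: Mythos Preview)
Your proposal is correct and follows essentially the same route as the paper's proof: use the realized $\mathbi{U}$ as the witness $\tilde{\mathbi{u}}$, bound $P(\mathbi{U}\in B_2)$ via the volume ratio of the two ellipsoids, and bound the conditional expectation of $h_{\mathbi{v}}(\tilde{\mathbi{u}})$ given $\mathbi{U}=\tilde{\mathbi{u}}$ by a quantity depending only on $k$ (using $L\le 2^k$ and \eqref{eq:bdr}), then apply Markov. The only cosmetic differences are that the paper writes the decomposition as $P(Y^n\in E_2)\ge P(Y^n\in E_2\mid\mathbi{U}\in B_2)\,P(\mathbi{U}\in B_2)$ and carries out the variance calculation slightly differently (obtaining $(L-1)(1+k/n^{5/13})$ rather than your $L(1+k/n^{5/13})$), but both lead to the same bound $2^{k+1}$.
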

The proof is given in Appendix \ref{ap:E2}.

\vspace{.1in}
Our goal will be to show that for $y^n\in E_2$, the ratio between $G(y^n)$ in \eqref{eq:defG} and 
\begin{equation}\label{eq:H}
H(y^n) = C_{\mathbi{v}'}  C_{\mathbi{v}}  \int_{\mathbb{R}^{k-1}} 
\exp\Big( - \frac{1}{2} h_{\mathbi{v}}(\mathbi{u}) \Big) d\mathbi{u} 
\end{equation}
is very close to $1.$ Specifically, we have 
\begin{proposition} \label{prop:HGH}
 There is an integer $N(k,\epsilon)$ such that for all $n>N(k,\epsilon)$ and all
$y^n \in  E_2$,
  \begin{equation}\label{eq:HGH}
  \Big(1-\frac{256}{n^{2/13}}\Big)H(y^n)\le G(y^n)\le H(y^n)
  \end{equation}
\end{proposition}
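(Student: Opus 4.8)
## Proof proposal for Proposition~\ref{prop:HGH}

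The plan is to compare the two integrals $G(y^n) = C_{\mathbi{v}'}C_{\mathbi{v}}\int_{B_1}\exp(-h_{\mathbi{v}}(\mathbi{u})/2)\,d\mathbi{u}$ and $H(y^n) = C_{\mathbi{v}'}C_{\mathbi{v}}\int_{\mathbb{R}^{k-1}}\exp(-h_{\mathbi{v}}(\mathbi{u})/2)\,d\mathbi{u}$ by controlling the ``tail mass'' $H(y^n) - G(y^n) = C_{\mathbi{v}'}C_{\mathbi{v}}\int_{B_1^c}\exp(-h_{\mathbi{v}}(\mathbi{u})/2)\,d\mathbi{u}$. The upper bound $G(y^n)\le H(y^n)$ is immediate since the integrand is nonnegative and $B_1\subseteq\mathbb{R}^{k-1}$. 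For the lower bound, I would first rewrite $h_{\mathbi{v}}$ in ``centered'' form: since $h_{\mathbi{v}}(\mathbi{u}) = \sum_{i=1}^L\frac{n}{q_i}(\sum_j u_j q_{ij} - v_i/n)^2$ is a (possibly inhomogeneous) quadratic form which, using the identity \eqref{eq:defPhi}, equals $(\mathbi{u} - \Phi^{-1}\mathbi{w})^T\Phi(\mathbi{u}-\Phi^{-1}\mathbi{w}) + (\text{const in }\mathbi{u})$. Hence $\exp(-h_{\mathbi{v}}(\mathbi{u})/2)$ is proportional to the density of $\cN(\Phi^{-1}\mathbi{w},\Phi^{-1})$, and the ratio $G(y^n)/H(y^n)$ is exactly $P(\mathbi{U}\in B_1)$ for $\mathbi{U}\sim\cN(\Phi^{-1}\mathbi{w},\Phi^{-1})$. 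So the task reduces to showing $P(\mathbi{U}\in B_1^c)\le 256/n^{2/13}$ under this Gaussian, for every $y^n\in E_2$.

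The second step implements the large-deviations argument sketched in Section~\ref{Sect:sketch}. Fix $y^n\in E_2$ and let $\tilde{\mathbi{u}}\in B_2$ be the witness guaranteed by the definition of $E_2$, so that $h_{\mathbi{v}}(\tilde{\mathbi{u}})<n^{1/13}$, i.e. $\sqrt{h_{\mathbi{v}}(\tilde{\mathbi{u}})}<n^{1/26}$. Then for $\delta\ge\delta_0$, I would invoke the chain of inequalities \eqref{eq:skt1}--\eqref{eq:spl3}: for any $\mathbi{u}\in B_1^c$, the triangle inequality for the norm $\sqrt{h_{\mathbi{v}}(\cdot)}$ (which is a genuine norm up to translation, coming from the positive-definite form $\Phi$ in the homogeneous part) combined with \eqref{eq:defdelta} and the separation \eqref{eq:skt1} gives $\sqrt{h_{\mathbi{v}}(\mathbi{u})}\ge 2n^{1/26}$, hence $\sqrt{h_{\mathbi{v}}(\mathbi{u})} - \sqrt{h_{\mathbi{v}}(\tilde{\mathbi{u}})} > n^{1/26}$. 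This shows $B_1^c\subseteq\{\mathbi{u}: \sqrt{h_{\mathbi{v}}(\mathbi{u})} - \sqrt{h_{\mathbi{v}}(\tilde{\mathbi{u}})} > n^{1/26}\}$. Then I apply the multivariate Chernoff/Gaussian concentration bound stated in the sketch, namely that for $\mathbi{U}\sim\cN(\Phi^{-1}\mathbi{w},\Phi^{-1})$ and any $\tilde{\mathbi{u}}$, $P(\sqrt{h_{\mathbi{v}}(\mathbi{U})} - \sqrt{h_{\mathbi{v}}(\tilde{\mathbi{u}})} > n^{\alpha})\le\exp(-n^{\alpha})$ with $\alpha = 1/26$ (this follows by diagonalizing $\Phi$ and applying \eqref{eq:Chernoff} coordinatewise, noting $\sqrt{h_{\mathbi{v}}(\mathbi{u})} = \|\mathbi{u}-\Phi^{-1}\mathbi{w}\|_\Phi$ up to the $\tilde{\mathbi{u}}$-independent shift, which must be handled carefully). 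This yields $P(\mathbi{U}\in B_1^c)\le\exp(-n^{1/26})$, which is far smaller than $256/n^{2/13}$ for $n\ge N(k,\epsilon)$, giving $G(y^n)\ge(1-\exp(-n^{1/26}))H(y^n)\ge(1-256/n^{2/13})H(y^n)$.

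The main obstacle is making the ``$\sqrt{h_{\mathbi{v}}}$ is essentially a norm'' manipulations fully rigorous: $h_{\mathbi{v}}$ is \emph{inhomogeneous} (it has the linear term $-v_i/n$), so $\sqrt{h_{\mathbi{v}}(\mathbi{u})}$ is the distance (in the $\Phi$-metric) from $\mathbi{u}$ to the point $\Phi^{-1}\mathbi{w}$, not a norm of $\mathbi{u}$ itself. The triangle-inequality step \eqref{eq:spl2}--\eqref{eq:spl3} compares $\sqrt{h_{\mathbi{v}}(\mathbi{u})}$, $\sqrt{h_{\mathbi{v}}(\tilde{\mathbi{u}})}$, and the ``free'' quantity $\sqrt{\sum_i\frac{n}{q_i}(\sum_j(u_j-\tilde{u}_j)q_{ij})^2} = \|\mathbi{u}-\tilde{\mathbi{u}}\|_\Phi$, and one must verify that $\|\mathbi{u}-\tilde{\mathbi{u}}\|_\Phi \le \sqrt{h_{\mathbi{v}}(\mathbi{u})} + \sqrt{h_{\mathbi{v}}(\tilde{\mathbi{u}})}$ — this is the triangle inequality with $\Phi^{-1}\mathbi{w}$ as the common vertex. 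A further delicate point is that the constant $\delta_0$ enters the radius of $B_2$ precisely so that \eqref{eq:skt1} holds with the stated slack $3/\delta_0\cdot n^{-6/13}$; I must make sure the arithmetic in \eqref{eq:spl2} closes (the factor $3$ is what upgrades $\sqrt{h_{\mathbi{v}}(\tilde{\mathbi{u}})}<n^{1/26}$ plus the separation into $\sqrt{h_{\mathbi{v}}(\mathbi{u})}\ge 2n^{1/26}$). Once these metric-geometry details are nailed down, the Gaussian tail bound and the final numerical estimate $\exp(-n^{1/26})\le 256/n^{2/13}$ for large $n$ are routine. I would defer the proof of Proposition~\ref{Prop:E2} (that $P(Y^n\in E_2)$ is close to $1$) to its own appendix, as already indicated.
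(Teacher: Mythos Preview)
Your proposal is correct and follows essentially the same route as the paper. The paper packages your two main steps as separate propositions: the geometric containment $(B_1)^c\subseteq E_{\mathbi{v}}(\tilde{\mathbi{u}},n^{1/26})$ is Proposition~\ref{Prop:inc} (proved exactly via the triangle-inequality argument you outline, working directly with the $\ell_2$ norm of the $L$-vector $\big(\sqrt{n/q_i}(\sum_j u_jq_{ij}-v_i/n)\big)_i$), and the Gaussian tail bound is Proposition~\ref{Prop:U}, which gives the slightly sharper form $\exp\!\big(-\tfrac12(n^{1/26}-\sqrt{k-1})^2\big)$ rather than $\exp(-n^{1/26})$; both are dominated by $256/n^{2/13}$ for $n\ge N(k,\epsilon)$.
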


The upper bound on $G(y^n)$ in \eqref{eq:HGH} is obvious. To prove the lower bound, we need several auxiliary definitions and propositions.

Given $\alpha>0$ and $\tilde{\mathbi{u}} \in \mathbb{R}^{k-1}$, define the set
   \begin{equation}\label{eq:Ev}
E_{\mathbi{v}}(\tilde{\mathbi{u}}, \alpha)=\{\mathbi{u} \in \mathbb{R}^{k-1}: \sqrt{h_{\mathbi{v}}(\mathbi{u})} - \sqrt{h_{\mathbi{v}}(\tilde{\mathbi{u}})} > \alpha \}.
  \end{equation}
\begin{proposition}\label{Prop:inc}
For every $y^n \in E_2$ and $\alpha\ge n^{-5/13},$ there exists $\tilde{\mathbi{u}} \in B_2$ such that the ellipsoid $B(\alpha)$ defined in \eqref{eq:ellipsoid}
satisfies
\begin{equation}\label{eq:BEag}
(B(\alpha))^c \subseteq E_{\mathbi{v}}(\tilde{\mathbi{u}}, \delta_0 n^{1/2} (\alpha - n^{-5/13} ) + n^{1/26}).
\end{equation}
\end{proposition}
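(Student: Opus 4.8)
The plan is to read the witness $\tilde{\mathbi{u}}$ straight off the definition of $E_2$, and then to bound $\sqrt{h_{\mathbi{v}}(\mathbi{u})}$ from below on $(B(\alpha))^c$ by a two-stage triangle-inequality argument: first separate $\mathbi{u}$ from $\tilde{\mathbi{u}}$ in the weighted norm attached to $h_{\mathbi{v}}$, then convert this into Euclidean separation using \eqref{eq:defdelta} and the Case-1 hypothesis $\delta\ge\delta_0$. Since $y^n\in E_2$, by definition there is a $\tilde{\mathbi{u}}\in B_2$ with $\sum_{i=1}^L \frac{n}{q_i}\big(\sum_{j} \tilde u_j q_{ij}-\tfrac{v_i}{n}\big)^2<n^{1/13}$; by \eqref{eq:hvcv} this reads $h_{\mathbi{v}}(\tilde{\mathbi{u}})<n^{1/13}$, i.e. $\sqrt{h_{\mathbi{v}}(\tilde{\mathbi{u}})}<n^{1/26}$. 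This $\tilde{\mathbi{u}}$ is the one claimed in the proposition.

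The key observation is that $h_{\mathbi{v}}$ is a squared Euclidean norm: setting $a_i(\mathbi{u})=\sqrt{n/q_i}\,\big(\sum_{j=1}^k u_j q_{ij}-v_i/n\big)$ we get $\sqrt{h_{\mathbi{v}}(\mathbi{u})}=\|a(\mathbi{u})\|_2$, and $a(\mathbi{u})-a(\tilde{\mathbi{u}})$ has coordinates $\sqrt{n/q_i}\sum_j(u_j-\tilde u_j)q_{ij}$, so $\|a(\mathbi{u})-a(\tilde{\mathbi{u}})\|_2=n^{1/2}\big(\sum_i\frac1{q_i}(\sum_j(u_j-\tilde u_j)q_{ij})^2\big)^{1/2}$. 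The triangle inequality in $\mathbb{R}^L$ gives $\|a(\mathbi{u})\|_2\ge\|a(\mathbi{u})-a(\tilde{\mathbi{u}})\|_2-\|a(\tilde{\mathbi{u}})\|_2$, hence
$$
\sqrt{h_{\mathbi{v}}(\mathbi{u})}-\sqrt{h_{\mathbi{v}}(\tilde{\mathbi{u}})}\ \ge\ \|a(\mathbi{u})-a(\tilde{\mathbi{u}})\|_2-2\sqrt{h_{\mathbi{v}}(\tilde{\mathbi{u}})}\ >\ \|a(\mathbi{u})-a(\tilde{\mathbi{u}})\|_2-2n^{1/26}.
$$
Applying \eqref{eq:defdelta} to the $(k-1)$-vector $\mathbi{u}-\tilde{\mathbi{u}}$ (whose $k$-th coordinate is $u_k-\tilde u_k$ under the standing convention $u_k=-\sum_{j<k}u_j$) and using $\delta\ge\delta_0$, we get $\|a(\mathbi{u})-a(\tilde{\mathbi{u}})\|_2\ge\delta_0 n^{1/2}\big(\sum_{i=1}^k(u_i-\tilde u_i)^2\big)^{1/2}$.

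It remains to lower-bound the Euclidean separation. Since $\mathbi{u}\in(B(\alpha))^c$, the definition \eqref{eq:ellipsoid} together with $u_k=-\sum_{j<k}u_j$ gives $\sum_{i=1}^k u_i^2\ge\alpha^2$; since $\tilde{\mathbi{u}}\in B_2$ we have $\sum_{i=1}^k\tilde u_i^2<\big(n^{-5/13}-\tfrac{3/\delta_0}{n^{6/13}}\big)^2$. A triangle inequality in $\mathbb{R}^k$ then yields $\big(\sum_i(u_i-\tilde u_i)^2\big)^{1/2}>\alpha-n^{-5/13}+\tfrac{3/\delta_0}{n^{6/13}}$, which is positive because $\alpha\ge n^{-5/13}$. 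Substituting and using $n^{1/2}/n^{6/13}=n^{1/26}$ gives
$$
\|a(\mathbi{u})-a(\tilde{\mathbi{u}})\|_2\ >\ \delta_0 n^{1/2}(\alpha-n^{-5/13})+3n^{1/26},
$$
and combining with the display above, $\sqrt{h_{\mathbi{v}}(\mathbi{u})}-\sqrt{h_{\mathbi{v}}(\tilde{\mathbi{u}})}>\delta_0 n^{1/2}(\alpha-n^{-5/13})+n^{1/26}$, i.e. $\mathbi{u}\in E_{\mathbi{v}}(\tilde{\mathbi{u}},\delta_0 n^{1/2}(\alpha-n^{-5/13})+n^{1/26})$. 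As $\mathbi{u}\in(B(\alpha))^c$ was arbitrary, this is exactly \eqref{eq:BEag}.

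There is no conceptual obstacle here — the argument is just two triangle inequalities glued together by \eqref{eq:defdelta}. The only point requiring care is the bookkeeping of exponents: the radius of $B_2$ was set to $n^{-5/13}-\tfrac{3/\delta_0}{n^{6/13}}$ precisely so that the $\delta_0$ cancels and the identity $n^{1/2-6/13}=n^{1/26}$ turns the gap between the radii of $(B(\alpha))^c$ and $B_2$ into a surplus of exactly $3n^{1/26}$, while the $n^{1/13}$ threshold defining $E_2$ produces a deficit of $2n^{1/26}$, leaving the asserted $n^{1/26}$; one should also note in passing that $B_2$ (hence $E_2$) is nonempty only once $n$ exceeds a threshold depending on $\delta_0$, which is harmless since the proposition is invoked only for large $n$.
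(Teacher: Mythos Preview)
Your proof is correct and follows essentially the same route as the paper's: pick $\tilde{\mathbi{u}}\in B_2$ from the definition of $E_2$, use the triangle inequality in $\mathbb{R}^k$ to lower-bound $\big(\sum_i(u_i-\tilde u_i)^2\big)^{1/2}$, apply \eqref{eq:defdelta} with the Case-1 hypothesis $\delta\ge\delta_0$, and then use the triangle inequality in $\mathbb{R}^L$ for $\sqrt{h_{\mathbi{v}}}$. The only cosmetic difference is that the paper first bounds $\sqrt{h_{\mathbi{v}}(\mathbi{u})}$ alone and then subtracts $\sqrt{h_{\mathbi{v}}(\tilde{\mathbi{u}})}$, whereas you subtract $2\sqrt{h_{\mathbi{v}}(\tilde{\mathbi{u}})}$ in one step; the arithmetic and the final inequality are identical.
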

The proof is given in Appendix \ref{ap:inc}. 
\begin{proposition}\label{Prop:U}
Let $\Phi$ be an $s\times s$ positive definite matrix, and let $\mathbi{t}$ be a column vector in $\mathbb{R}^s.$
Define a quadratic function $h:\mathbb{R}^s \to \mathbb{R}$ as follows:
\begin{equation}\label{eq:form}
h(\mathbi{u})=  (\mathbi{u}-\mathbi{t})^T \Phi  (\mathbi{u}-\mathbi{t}) + C,
\end{equation}
where $C \ge 0$ is a nonnegative constant.
For a given $\tilde{\mathbi{u}}\in \mathbb{R}^s,$ define the set
$$
E(\tilde{\mathbi{u}}, \alpha)=\{\mathbi{u} \in \mathbb{R}^s: \sqrt{h(\mathbi{u})} - \sqrt{h(\tilde{\mathbi{u}})} > \alpha \}.
$$
For all  $\alpha \ge \sqrt{s}$ and all $\tilde{\mathbi{u}} \in \mathbb{R}^s$ the following inequality holds true:
  \begin{equation}\label{eq:II}
\frac{\int_{E(\tilde{\mathbi{u}}, \alpha)} \exp(-\frac{1}{2}h(\mathbi{u})) d\mathbi{u}}{\int_{\mathbb{R}^s} \exp(-\frac{1}{2} h(\mathbi{u})) d\mathbi{u}}
\le e^{-\frac{(\alpha-\sqrt{s})^2}{2}}.
  \end{equation}
\end{proposition}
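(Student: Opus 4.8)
The plan is to reduce to the standard case $C=0$ and $\mathbi{t}=\mathbi{0}$ by a translation, and then exploit the rotational symmetry of the isotropic Gaussian after whitening. Concretely, write $h(\mathbi{u}) = \|\Phi^{1/2}(\mathbi{u}-\mathbi{t})\|^2 + C$, and substitute $\mathbi{x} = \Phi^{1/2}(\mathbi{u}-\mathbi{t})$. Under this change of variables the Jacobian $\det(\Phi^{-1/2})$ cancels between numerator and denominator, so the left-hand side of \eqref{eq:II} equals $\big(\int_{\mathbb{R}^s}\exp(-\|\mathbi{x}\|^2/2)\,d\mathbi{x}\big)^{-1}\int_{S}\exp(-\|\mathbi{x}\|^2/2)\,d\mathbi{x}$, where $S=\{\mathbi{x}\in\mathbb{R}^s:\sqrt{\|\mathbi{x}\|^2+C}-\sqrt{\|\tilde{\mathbi{x}}\|^2+C}>\alpha\}$ and $\tilde{\mathbi{x}}=\Phi^{1/2}(\tilde{\mathbi{u}}-\mathbi{t})$. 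Since $C\ge 0$, one has $\sqrt{\|\tilde{\mathbi{x}}\|^2+C}\ge\|\tilde{\mathbi{x}}\|\ge 0$, hence $S\subseteq\{\mathbi{x}:\sqrt{\|\mathbi{x}\|^2+C}>\alpha\}=\{\mathbi{x}:\|\mathbi{x}\|^2>\alpha^2-C\}$. If $\alpha^2\le C$ this set is empty (and in any case $\sqrt{\|\mathbi x\|^2+C}-\sqrt{\|\tilde{\mathbi x}\|^2+C}\le\|\mathbi x\|$ trivially fails), so we may assume $\alpha^2>C\ge 0$; then $S\subseteq\{\mathbi{x}:\|\mathbi{x}\|>\sqrt{\alpha^2-C}\}$. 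Actually the cleaner bound uses concavity of $\sqrt{\cdot}$: $\sqrt{\|\mathbi{x}\|^2+C}-\sqrt{\|\tilde{\mathbi{x}}\|^2+C}\le \|\mathbi{x}\|-\|\tilde{\mathbi{x}}\|$ by monotonicity of $t\mapsto\sqrt{t+C}-\sqrt{t+C}$... rather, by the inequality $\sqrt{a+C}-\sqrt{b+C}\le\sqrt{a}-\sqrt{b}$ valid for $a\ge b\ge 0$, $C\ge 0$. Thus $S\subseteq\{\mathbi{x}:\|\mathbi{x}\|-\|\tilde{\mathbi{x}}\|>\alpha\}\subseteq\{\mathbi{x}:\|\mathbi{x}\|>\alpha\}$ (since $\|\tilde{\mathbi{x}}\|\ge 0$).

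It therefore suffices to bound $P(\|\mathbi{X}\|>\alpha)$ for $\mathbi{X}\sim\mathcal{N}(\mathbi{0},I_s)$ when $\alpha\ge\sqrt{s}$, and to show this is at most $e^{-(\alpha-\sqrt{s})^2/2}$. This is a standard Gaussian concentration fact: the map $\mathbi{x}\mapsto\|\mathbi{x}\|$ is $1$-Lipschitz, its mean satisfies $\mathbb{E}\|\mathbi{X}\|\le\sqrt{\mathbb{E}\|\mathbi{X}\|^2}=\sqrt{s}$, and by the Gaussian concentration inequality (or by a direct Chernoff/Borell–TIS argument) $P(\|\mathbi{X}\|>\mathbb{E}\|\mathbi{X}\|+t)\le e^{-t^2/2}$ for all $t\ge 0$. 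Taking $t=\alpha-\sqrt{s}\ge\alpha-\mathbb{E}\|\mathbi X\|\ge 0$ gives $P(\|\mathbi{X}\|>\alpha)\le P(\|\mathbi{X}\|>\mathbb{E}\|\mathbi X\|+(\alpha-\sqrt{s}))\le e^{-(\alpha-\sqrt{s})^2/2}$, which is exactly \eqref{eq:II}.

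The only genuinely delicate point is justifying the one-dimensional tail bound $P(\|\mathbi{X}\|>\mathbb{E}\|\mathbi{X}\|+t)\le e^{-t^2/2}$ without invoking heavy machinery, given the paper's stated preference for elementary arguments; I expect this to be the main obstacle. I would handle it by the elementary Gaussian-Poincaré/Herbst-type computation, or even more simply via the Chernoff bound on $\|\mathbi X\|^2\sim\chi^2_s$: for $\lambda\in(0,1/2)$, $P(\|\mathbi X\|^2>u)\le (1-2\lambda)^{-s/2}e^{-\lambda u}$, optimize at $\lambda=\tfrac12(1-s/u)$ to get $P(\|\mathbi X\|^2>u)\le\exp\!\big(-\tfrac12(u-s-s\log(u/s))\big)$ for $u>s$, set $u=\alpha^2$, and then verify the elementary inequality $\alpha^2-s-s\log(\alpha^2/s)\ge(\alpha-\sqrt{s})^2$ for $\alpha\ge\sqrt{s}$ (which reduces, after dividing by $s$ and writing $r=\alpha/\sqrt{s}\ge1$, to $r^2-1-2\log r\ge(r-1)^2$, i.e. $2(r-1)\ge 2\log r$, i.e. $\log r\le r-1$). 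That chain of elementary inequalities completes the proof.
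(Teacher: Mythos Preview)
Your proof is correct and follows essentially the same architecture as the paper's: whiten via $\mathbi{x}=\Phi^{1/2}(\mathbi{u}-\mathbi{t})$, argue that the set $E(\tilde{\mathbi{u}},\alpha)$ transforms into a subset of $\{\|\mathbi{x}\|>\alpha\}$, and then invoke the Gaussian tail bound $P(\|\mathbi{X}\|>\alpha)\le e^{-(\alpha-\sqrt{s})^2/2}$ for $\mathbi{X}\sim\mathcal{N}(\mathbi{0},I_s)$. The paper's reduction uses $\sqrt{h(\tilde{\mathbi{u}})}\ge\sqrt{h(\mathbi{t})}=\sqrt{C}$ together with the subadditivity $\sqrt{h(\mathbi{u})}\le\sqrt{(\mathbi{u}-\mathbi{t})^T\Phi(\mathbi{u}-\mathbi{t})}+\sqrt{C}$; your reduction via $\sqrt{a+C}-\sqrt{b+C}\le\sqrt{a}-\sqrt{b}$ for $a\ge b\ge0$ is an equivalent way to reach the same inclusion (your earlier aside about the case $\alpha^2\le C$ is muddled, but you correctly discard it and the final argument stands).

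The one genuine difference is in how the tail bound itself is justified. The paper simply \emph{cites} the Sudakov--Tsirel'son/Borell Gaussian concentration inequality for $1$-Lipschitz functions and combines it with $\mathbb{E}\|\mathbi{X}\|\le\sqrt{s}$. You instead give a self-contained Chernoff argument on $\|\mathbi{X}\|^2\sim\chi^2_s$, obtaining $P(\|\mathbi{X}\|^2>\alpha^2)\le\exp\!\big(-\tfrac12(\alpha^2-s-s\log(\alpha^2/s))\big)$ and then reducing the comparison with $(\alpha-\sqrt{s})^2$ to the elementary inequality $\log r\le r-1$. This is a nice touch: it is strictly more elementary than the paper's appeal to Borell--TIS and is more in keeping with the paper's stated goal of keeping the proof ``completely elementary.'' The trade-off is that the paper's citation is one line, while your route requires a short calculation; both are entirely valid.
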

The proof is given in Appendix \ref{ap:concen}.

\vspace*{.1in}{\em {Proof of Proposition \ref{prop:HGH}}}:
Our plan is to use Prop.~\ref{Prop:U} for the function $h_{\mathbi{v}}$ defined in \eqref{eq:hvcv} in order to estimate $G(y^n).$
First let us show that $h_{\mathbi{v}}$ can be written as a quadratic form of the type \eqref{eq:form}.
Define a vector $\mathbi{w} \in \mathbb{R}^{k-1}$ as follows:
\begin{equation}\label{eq:defw}
\mathbi{w}=\mathbi{w}(\mathbi{v},\mathbi{Q})= \Big( \sum_{i=1}^L  \frac{
(q_{im}-q_{ik}) v_i  }{q_i}, m=1,\dots,k-1 \Big)^T.
\end{equation}
Then we have
   \begin{align}
h_{\mathbi{v}}(\mathbi{u}) & = \sum_{i=1}^L \frac{n}{q_i} \Big(\sum_{j=1}^k u_j q_{ij} \Big)^2
- 2 \sum_{i=1}^L  \sum_{j=1}^k   \frac{v_i q_{ij}}{q_i} u_j 
+ \sum_{i=1}^L \frac{ v_i^2}{nq_i}  \nonumber\\
& = \mathbi{u}^T \Phi \mathbi{u} - 2  \sum_{j=1}^k \Big(\sum_{i=1}^L  \frac{q_{ij} v_i }{q_i}\Big) u_j 
+ \sum_{i=1}^L \frac{ v_i^2}{nq_i} \nonumber\\
& = \mathbi{u}^T \Phi \mathbi{u} - 2  \sum_{j=1}^{k-1} \Big(\sum_{i=1}^L  \frac{
(q_{ij}-q_{ik}) v_i  }{q_i}\Big) u_j 
+ \sum_{i=1}^L \frac{ v_i^2}{nq_i} \nonumber\\
& = \mathbi{u}^T \Phi \mathbi{u} - \mathbi{w}^T \mathbi{u} - \mathbi{u}^T \mathbi{w}
+ \sum_{i=1}^L \frac{ v_i^2}{nq_i} \nonumber\\
& = (\mathbi{u} - \Phi^{-1}\mathbi{w})^T \Phi (\mathbi{u} - \Phi^{-1}\mathbi{w})
- \mathbi{w}^T \Phi^{-1} \mathbi{w} + \sum_{i=1}^L \frac{ v_i^2}{nq_i},\label{eq:hv}
\end{align}
where the second equality follows from the definition of $\Phi$ in \eqref{eq:defPhi}.

Since $h_{\mathbi{v}}(\mathbi{u}) \ge 0$ for all $\mathbi{u} \in \mathbb{R}^{k-1}$, the constant term
$C:=- \mathbi{w}^T \Phi^{-1} \mathbi{w} + \sum_{i=1}^L \frac{ v_i^2}{nq_i}$ is nonnegative. This shows that $h_{\mathbi{v}}$ satisfies the conditions in Prop.~\ref{Prop:U}.

To estimate $G(y^n)$ we first note that, by Prop.~\ref{Prop:inc}, for every $y^n \in E_2,$ there exists $\tilde{\mathbi{u}} \in B_2$ such that
$
(B_1)^c \subseteq E_{\mathbi{v}}(\tilde{\mathbi{u}}, n^{1/26}).
$
We then obtain
\begin{align*}
G(y^n)  & = C_{\mathbi{v}'}  C_{\mathbi{v}}  \int_{B_1} 
\exp\Big( - \frac{1}{2} h_{\mathbi{v}}(\mathbi{u})  \Big) d\mathbi{u} \nonumber\\
& \ge C_{\mathbi{v}'}  C_{\mathbi{v}} \Big( \int_{\mathbb{R}^{k-1}} 
\exp\Big( - \frac{1}{2} h_{\mathbi{v}}(\mathbi{u})  \Big) d\mathbi{u} -  \int_{E_{\mathbi{v}}(\tilde{\mathbi{u}}, n^{1/26})} 
\exp\Big( - \frac{1}{2} h_{\mathbi{v}}(\mathbi{u})  \Big)  d\mathbi{u} \Big) \\
& \ge H(y^n) \Big( 1 -  
\exp \Big(- \frac{1}{2} \big( n^{1/26} - \sqrt{k-1} \big)^2 \Big) \Big) \\
& \ge H(y^n) \Big( 1 -  
\frac{256}{n^{2/13}} \Big).\label{eq:bG}
\end{align*}
Here the  next-to-last step follows from Prop.~\ref{Prop:U}, and the last inequality holds for all $n\ge N(k,\epsilon)$
as long as we choose $N(k,\epsilon)$ large enough.
This proves the lower bound in \eqref{eq:HGH}.
\qed

\vspace*{.1in}
We have shown that $G(y^n)$ can be approximated by $H(y^n)$. Based on this, we can further show that $P_{Y^n}(y^n)$ can be approximated by $H(y^n)$.
 
\begin{lemma}\label{lemma:PH}  There is an integer $N(k,\epsilon)$ such that for all $n>N(k,\epsilon)$ and all
$y^n \in E_1\cap E_2$,
\begin{equation}\label{eq:PH}
\Big| P_{Y^n}(y^n)- H(y^n) \Big| 
 \le \frac{4k^3+256}{n^{2/13}} 
H(y^n).
\end{equation}
\end{lemma}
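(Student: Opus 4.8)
The plan is to chain together the two approximation results already proved. Recall from Proposition~\ref{prop:fg} that $P_{Y^n}(y^n)=C_{\mathbi{v}'}\int_{B_1}\exp(g(\mathbi{u},y^n))\,d\mathbi{u}$, that $G(y^n)=C_{\mathbi{v}'}\int_{B_1}\exp(g_2(\mathbi{u},y^n))\,d\mathbi{u}$ by \eqref{eq:defG}, and that $H(y^n)$ in \eqref{eq:H} is the same integral taken over all of $\mathbb{R}^{k-1}$ rather than over $B_1$. So it suffices to control $|P_{Y^n}(y^n)-G(y^n)|$ using Proposition~\ref{prop:g-g2} and $|G(y^n)-H(y^n)|$ using Proposition~\ref{prop:HGH}, and then to combine the two bounds by the triangle inequality.

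First I would integrate the pointwise estimate \eqref{eq:E1} over the ellipsoid $B_1$. For $y^n\in E_1$, $\mathbi{u}\in B_1$ and $n>N(k,\epsilon)$ we have $|\exp(g(\mathbi{u},y^n))-\exp(g_2(\mathbi{u},y^n))|\le\frac{4k^3}{n^{2/13}}\exp(g_2(\mathbi{u},y^n))$; since $C_{\mathbi{v}'}>0$ and $\exp(g_2)\ge 0$, multiplying by $C_{\mathbi{v}'}$ and integrating over $B_1$ gives
\[
\big|P_{Y^n}(y^n)-G(y^n)\big|\le C_{\mathbi{v}'}\int_{B_1}\big|\exp(g(\mathbi{u},y^n))-\exp(g_2(\mathbi{u},y^n))\big|\,d\mathbi{u}\le\frac{4k^3}{n^{2/13}}\,G(y^n).
\]
Next, for $y^n\in E_2$ and $n>N(k,\epsilon)$, Proposition~\ref{prop:HGH} yields $G(y^n)\le H(y^n)$ and $|G(y^n)-H(y^n)|\le\frac{256}{n^{2/13}}H(y^n)$. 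Combining, for $y^n\in E_1\cap E_2$ and $n$ exceeding the larger of the two thresholds,
\[
\big|P_{Y^n}(y^n)-H(y^n)\big|\le\big|P_{Y^n}(y^n)-G(y^n)\big|+\big|G(y^n)-H(y^n)\big|\le\frac{4k^3}{n^{2/13}}G(y^n)+\frac{256}{n^{2/13}}H(y^n)\le\frac{4k^3+256}{n^{2/13}}H(y^n),
\]
where the last step uses $G(y^n)\le H(y^n)$ once more. This is exactly \eqref{eq:PH}.

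There is no real difficulty in this step: the lemma is essentially bookkeeping that glues the previously established error estimates. The only point worth noting is that Proposition~\ref{prop:g-g2} lives on the event $E_1$ while Proposition~\ref{prop:HGH} lives on $E_2$, which is why the conclusion is stated on $E_1\cap E_2$; and since the thresholds $N(k,\epsilon)$ in both propositions depend only on $k$ and $\epsilon$ and not on $\mathbi{Q}$, the $N(k,\epsilon)$ in this lemma — taken as the maximum of those two — retains the uniformity in $\mathbi{Q}$ that the final argument requires.
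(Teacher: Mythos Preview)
Your proof is correct and follows essentially the same approach as the paper: bound $|P_{Y^n}(y^n)-G(y^n)|$ via Proposition~\ref{prop:g-g2}, bound $|G(y^n)-H(y^n)|$ via Proposition~\ref{prop:HGH}, and combine by the triangle inequality together with $G(y^n)\le H(y^n)$. The only cosmetic difference is that the paper converts $G$ to $H$ immediately after the first bound, whereas you keep $G$ and convert at the end; the arithmetic is identical.
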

\begin{proof}
For every $y^n \in E_1$ we have
\begin{align*}
|P_{Y^n}(y^n)- G(y^n) | & = \Big| C_{\mathbi{v}'} \int_{ B_1}   \exp(g(\mathbi{u},y^n)) d\mathbi{u} - C_{\mathbi{v}'} \int_{ B_1}   \exp(g_2(\mathbi{u},y^n)) d\mathbi{u} \Big| \\
& \le C_{\mathbi{v}'} \int_{ B_1}   |\exp(g(\mathbi{u},y^n)) - \exp(g_2(\mathbi{u},y^n))| d\mathbi{u} \\
& \overset{(a)}{\le} \frac{4k^3}{n^{2/13}} C_{\mathbi{v}'} \int_{ B_1}   \exp(g_2(\mathbi{u},y^n)) d\mathbi{u} = \frac{4k^3}{n^{2/13}} G(y^n) \\
& \le \frac{4k^3}{n^{2/13}} H(y^n),
\end{align*}
where $(a)$ follows from \eqref{eq:E1}. On account of \eqref{eq:HGH}, this concludes the proof.
\end{proof} 

\vspace*{.1in}
Define
\begin{equation}\label{eq:f1}
f_1(\mathbi{u},y^n)= \frac{C_{\mathbi{v}'} \exp(g_2(\mathbi{u},y^n))}{H(y^n)}.
\end{equation}
For every $y^n \in E_1\cap E_2,$ we can approximate the conditional probability density function $f_{\mathbi{U}|Y^n}(\mathbi{u}|Y^n=y^n)$ as $f_1(\mathbi{u},y^n)$. At the same time, we will also show that for a fixed $y^n,$ $f_1(\mathbi{u},y^n)$ is the probability density function of a Gaussian random vector with mean vector $\Phi^{-1}\mathbi{w}$ and covariance matrix $\Phi^{-1}.$

The difference is bounded as follows: for all $\mathbi{u}\in B_1,$
  \begin{align}
  & |f_{\mathbi{U}|Y^n}(\mathbi{u}|Y^n=y^n) - f_1(\mathbi{u},y^n) | = 
\Big|\frac{C_{\mathbi{v}'} \exp(g(\mathbi{u},y^n))}{P_{Y^n}(y^n)} - 
\frac{C_{\mathbi{v}'} \exp(g_2(\mathbi{u},y^n))}{H(y^n)} \Big| \nonumber\\
&\overset{(a)}{\le}  \frac{C_{\mathbi{v}'} \exp(g_2(\mathbi{u},y^n)) }{H(y^n)}
\max\Big(\frac{1 +4k^3/(n^{2/13}) }{1 - (4k^3+256)/(n^{2/13}) } -1, 
1- \frac{1 - 4k^3/(n^{2/13}) }{1 + (4k^3+256)/(n^{2/13}) } \Big) \nonumber\\
&\overset{(b)}{\le}  \frac{16k^3 + 512}{n^{2/13} } f_1(\mathbi{u},y^n)\nonumber\\
&\overset{(c)}{=} \frac{C_1(k,\epsilon)}{n^{2/13} } f_1(\mathbi{u},y^n)
,\label{eq:ff1}
\end{align}
where $(a)$ follows from \eqref{eq:E1} and \eqref{eq:PH}; $(b)$ holds for all $n\ge N(k,\epsilon)$ as long as we take $N(k,\epsilon)>(8k^3+512)^{13/2}$;
in $(c)$ we define $C_1(k,\epsilon) := 16k^3 + 512$.
By definition,
   \begin{align}
f_1(\mathbi{u},y^n) & = \frac{C_{\mathbi{v}'}  C_{\mathbi{v}}
\exp\Big( - \frac{1}{2} h_{\mathbi{v}}(\mathbi{u})  \Big) }
{C_{\mathbi{v}'}  C_{\mathbi{v}}  \int_{\mathbb{R}^{k-1}} 
\exp\Big( - \frac{1}{2} h_{\mathbi{v}}(\mathbi{u})  \Big) d\mathbi{u} }\nonumber \\
& \overset{(a)}{=} \frac{ \exp\Big( - \frac{1}{2} (\mathbi{u} - \Phi^{-1}\mathbi{w})^T \Phi (\mathbi{u} - \Phi^{-1}\mathbi{w})  \Big) }
{  \int_{\mathbb{R}^{k-1}} 
\exp\Big( - \frac{1}{2} (\mathbi{u} - \Phi^{-1}\mathbi{w})^T \Phi (\mathbi{u} - \Phi^{-1}\mathbi{w})  
 d\mathbi{u} \Big)} \nonumber \\
& = \frac{\sqrt{|\Phi|}}{\sqrt{(2\pi)^{k-1}}}
 \exp\Big( - \frac{1}{2} (\mathbi{u} - \Phi^{-1}\mathbi{w})^T \Phi (\mathbi{u} - \Phi^{-1}\mathbi{w})  \Big),
 \label{eq:expf1}
\end{align}
where $(a)$ follows from \eqref{eq:hv}. Thus $f_1$ indeed represents a Gaussian pdf. 

\vspace*{.1in}
Recall that our goal is to estimate the Bayes estimation loss \eqref{eq:BEL}. Proceeding in this direction, we show next that for every $y^n \in E_1\cap E_2,$ the $\ell_1$ distance between $\mathbb{E}(\mathbi{U}|Y^n=y^n)$ and $\Phi^{-1}\mathbi{w}$ is very small.
\begin{lemma}\label{lemma:EPhi}
 There is an integer $N(k,\epsilon)$ such that for all $n>N(k,\epsilon)$ and all
$y^n \in E_1\cap E_2$,
  $$
  \|\mathbb{E}(\mathbi{U}|Y^n=y^n)-\Phi^{-1}\mathbi{w}\|_1\le \frac{C_2(k,\epsilon)}{n^{7/13}},
  $$
  where $C_2(k,\epsilon) := \frac{32k^4 e^{\epsilon}(k^3+32)}{\delta_0^2}
+ \sqrt{k-1}$.
\end{lemma}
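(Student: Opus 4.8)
The plan is to compare the posterior mean $\mathbb{E}(\mathbi{U}|Y^n=y^n)=\int_{B_1}\mathbi{u}\,f_{\mathbi{U}|Y^n}(\mathbi{u}|y^n)\,d\mathbi{u}$ directly with $\Phi^{-1}\mathbi{w}$, using that, by \eqref{eq:expf1}, $f_1(\cdot,y^n)$ is the density of a Gaussian with mean $\Phi^{-1}\mathbi{w}$ and covariance $\Phi^{-1}$, so that $\Phi^{-1}\mathbi{w}=\int_{\mathbb{R}^{k-1}}\mathbi{u}\,f_1(\mathbi{u},y^n)\,d\mathbi{u}$. Subtracting these two integral representations and splitting the domain at $B_1$ gives
\begin{multline*}
\mathbb{E}(\mathbi{U}|Y^n=y^n)-\Phi^{-1}\mathbi{w}
=\int_{B_1}\mathbi{u}\,\big(f_{\mathbi{U}|Y^n}(\mathbi{u}|y^n)-f_1(\mathbi{u},y^n)\big)\,d\mathbi{u}\\
-\int_{\mathbb{R}^{k-1}\setminus B_1}\mathbi{u}\,f_1(\mathbi{u},y^n)\,d\mathbi{u},
\end{multline*}
so it suffices to bound the $\ell_1$ norm of each term on the right for $y^n\in E_1\cap E_2$.

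For the first term I would pass the $\ell_1$ norm inside the integral and invoke \eqref{eq:ff1}, which on $B_1$ bounds the integrand by $\tfrac{C_1(k,\epsilon)}{n^{2/13}}\,\|\mathbi{u}\|_1\,f_1(\mathbi{u},y^n)$; this reduces matters to estimating $\int_{B_1}\|\mathbi{u}\|_1 f_1(\mathbi{u},y^n)\,d\mathbi{u}\le\mathbb{E}_{f_1}\|\mathbi{U}\|_1\le\sqrt{k-1}\,\big(\mathbb{E}_{f_1}\|\mathbi{U}\|_2^2\big)^{1/2}=\sqrt{k-1}\,\big(\tr(\Phi^{-1})+\|\Phi^{-1}\mathbi{w}\|_2^2\big)^{1/2}$. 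Here Case~1 enters: from \eqref{eq:defPhi} and \eqref{eq:defdelta} one has $\mathbi{u}^T\Phi\mathbi{u}\ge n\delta^2\|\mathbi{u}\|_2^2\ge n\delta_0^2\|\mathbi{u}\|_2^2$, so $\Phi\succeq n\delta_0^2 I$, whence $\tr(\Phi^{-1})\le(k-1)/(n\delta_0^2)$ and $\|\Phi^{-1}\mathbi{w}\|_2\le\|\mathbi{w}\|_2/(n\delta_0^2)$. Finally I would bound $\|\mathbi{w}\|_2$ on $E_1$: by \eqref{eq:defw}, $|w_m|\le\sum_{i=1}^L\frac{|q_{im}-q_{ik}|}{q_i}|v_i|\le e^{\epsilon}\|\mathbi{v}\|_1<2k e^{\epsilon}n^{8/13}$, using that after the merging step of Section~\ref{Sect:merge} each $(q_{i1},\dots,q_{ik})$ is proportional to a $\{1,e^{\epsilon}\}$-vector, so $e^{-\epsilon}\le q_{ij}/q_i\le e^{\epsilon}$, and that $\|\mathbi{v}\|_1<2kn^{8/13}$ is the definition of $E_1$. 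Combining these estimates shows $\mathbb{E}_{f_1}\|\mathbi{U}\|_2^2=O(n^{-10/13})$ with explicit constant (dominated by $8k^3e^{2\epsilon}/(\delta_0^4 n^{10/13})$ once $n$ is large, since $\tr(\Phi^{-1})=O(1/n)$ is then negligible), and hence the first term has $\ell_1$ norm at most $\tfrac{32k^4e^{\epsilon}(k^3+32)}{\delta_0^2\,n^{7/13}}$.

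For the second (tail) term I would use Cauchy--Schwarz: $\int_{\mathbb{R}^{k-1}\setminus B_1}\|\mathbi{u}\|_1 f_1(\mathbi{u},y^n)\,d\mathbi{u}\le\big(\mathbb{E}_{f_1}\|\mathbi{U}\|_1^2\big)^{1/2}\big(\int_{\mathbb{R}^{k-1}\setminus B_1}f_1(\mathbi{u},y^n)\,d\mathbi{u}\big)^{1/2}$. The first factor is $O(n^{-5/13})$ by the same variance computation, and for the second factor: for $y^n\in E_2$, Proposition~\ref{Prop:inc} (with $\alpha=n^{-5/13}$) produces $\tilde{\mathbi{u}}\in B_2$ with $(B_1)^c\subseteq E_{\mathbi{v}}(\tilde{\mathbi{u}},n^{1/26})$, and Proposition~\ref{Prop:U} applied to $h_{\mathbi{v}}$ — which has the form \eqref{eq:form} with $C\ge0$ by \eqref{eq:hv} — bounds the $f_1$-mass of $E_{\mathbi{v}}(\tilde{\mathbi{u}},n^{1/26})$ by $\exp(-\tfrac12(n^{1/26}-\sqrt{k-1})^2)$. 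Thus the tail term is super-exponentially small, in particular $\le\sqrt{k-1}\,n^{-7/13}$ for $n\ge N(k,\epsilon)$; adding the two bounds gives $\|\mathbb{E}(\mathbi{U}|Y^n=y^n)-\Phi^{-1}\mathbi{w}\|_1\le C_2(k,\epsilon)n^{-7/13}$ with $C_2=\tfrac{32k^4e^{\epsilon}(k^3+32)}{\delta_0^2}+\sqrt{k-1}$, as claimed.

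The $\ell_1/\ell_2$ passages, the Gaussian variance identity, and the deterministic bounds on $q_{ij}/q_i$ are all routine; the one delicate point is that the estimate of $\mathbb{E}_{f_1}\|\mathbi{U}\|_2^2$ must be uniform over $\mathbi{Q}\in\cD_{\epsilon,E}$ (constants depending only on $k,\epsilon$), which is exactly why we are in Case~1 — the hypothesis $\delta\ge\delta_0$ yields the $\mathbi{Q}$-uniform lower bound $\Phi\succeq n\delta_0^2 I$ — and why we restrict to $y^n\in E_1$, which keeps $\|\mathbi{w}\|$, and hence the Gaussian mean $\Phi^{-1}\mathbi{w}$, under control; without either restriction the bound would not be uniform in $\mathbi{Q}$.
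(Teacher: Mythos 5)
Your proposal is correct and follows essentially the same route as the paper: the same decomposition of $\mathbb{E}(\mathbi{U}|Y^n=y^n)-\Phi^{-1}\mathbi{w}$ into a main term on $B_1$ controlled by \eqref{eq:ff1} and a Gaussian tail on $(B_1)^c$, with the same uniform inputs ($\Phi\succeq n\delta_0^2 I$ from the Case~1 hypothesis $\delta\ge\delta_0$, the bound on $\|\mathbi{w}\|$ from the definition of $E_1$, and Propositions~\ref{Prop:inc} and~\ref{Prop:U} for the tail). The only deviations are cosmetic: you bound $\mathbb{E}_{f_1}\|\mathbi{U}\|_1$ by $\sqrt{k-1}\,(\tr(\Phi^{-1})+\|\Phi^{-1}\mathbi{w}\|_2^2)^{1/2}$ rather than via the coordinatewise first-absolute-moment bound of Appendix~\ref{ap:Gauss}, and you dispose of the tail by Cauchy--Schwarz against the single concentration estimate $P_{f_1}((B_1)^c)\le \exp(-\tfrac12(n^{1/26}-\sqrt{k-1})^2)$ instead of the integration-by-parts computation of Appendix~\ref{ap:f1}; both variants give the stated constants.
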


\begin{proof}
  \begin{align}
&  \| \mathbb{E}(\mathbi{U}|Y^n=y^n) - \Phi^{-1}\mathbi{w} \|_1 \nonumber\\
&=  \Big\| \int_{ B_1} \mathbi{u} f_{\mathbi{U}|Y^n}(\mathbi{u}|Y^n=y^n) d\mathbi{u}
- \int_{\mathbb{R}^{k-1}} \mathbi{u} f_1(\mathbi{u},y^n) d\mathbi{u} \Big\|_1 \nonumber\\
&=  \sum_{i=1}^{k-1} \Big| \int_{ B_1} u_i f_{\mathbi{U}|Y^n}(\mathbi{u}|Y^n=y^n) d\mathbi{u}
- \int_{ \mathbb{R}^{k-1}} u_i f_1(\mathbi{u},y^n) d\mathbi{u} \Big| \nonumber\\
&\le  \sum_{i=1}^{k-1} \Big|\int_{ B_1} u_i (f_{\mathbi{U}|Y^n}(\mathbi{u}|Y^n=y^n) - f_1(\mathbi{u},y^n)) d\mathbi{u} \Big|
+ \sum_{i=1}^{k-1} \Big| \int_{ (B_1)^c} u_i f_1(\mathbi{u},y^n) d\mathbi{u} \Big| \nonumber\\
&\overset{(a)}{\le} \frac{16k^3 + 512}{n^{2/13} } \sum_{i=1}^{k-1} \int_{ B_1} |u_i| f_1(\mathbi{u},y^n) d\mathbi{u}
+ \sum_{i=1}^{k-1} \int_{ (B_1)^c} |u_i| f_1(\mathbi{u},y^n) d\mathbi{u}  \nonumber\\
& \le  \frac{16k^3 + 512}{n^{2/13} }  \int_{ \mathbb{R}^{k-1}} 
\|\mathbi{u} \|_1 f_1(\mathbi{u},y^n) d\mathbi{u}
+  \int_{ (B_1)^c} \|\mathbi{u}\|_1 f_1(\mathbi{u},y^n) d\mathbi{u} \nonumber\\
&\overset{(b)}{\le}  \frac{16k^3 + 512}{n^{2/13} } \biggl( \sqrt{k-1} \| \Phi^{-1}\mathbi{w} \|_2 + \sqrt{\frac{2(k-1)}{\pi} \tr(\Phi^{-1})} \biggr)
+  \int_{ (B_1)^c} \|\mathbi{u}\|_1 f_1(\mathbi{u},y^n) d\mathbi{u}, \label{eq:bdE}
  \end{align}
where $(a)$ follows from \eqref{eq:ff1}; and $(b)$ is justified in Appendix~\ref{ap:Gauss}.

Now let us write out $\mathbi{w}$ from its definition in \eqref{eq:defw}:
     \begin{align*}
\|\mathbi{w}\|_2 &\le \|\mathbi{w}\|_1 = 
        \sum_{j=1}^{k-1} \Big| \sum_{i=1}^L  \frac{
               (q_{ij}-q_{ik}) v_i  }{q_i} \Big|\\
&\le k e^{\epsilon} \sum_{j=1}^{k-1}  \sum_{i=1}^L  |v_i|\\
&\le 2k^2(k-1) e^{\epsilon} n^{8/13} \text{~for all~} y^n\in E_1,
\end{align*}
where the second line follows from the fact that the vector 
$(q_{i,1}, q_{i,2}, \dots, q_{ik})$ is proportional to one of the vectors in $\{1,e^{\epsilon}\}^k$ and
the third one follows directly from the definition of $E_1.$

Next we claim that all the eigenvalues of $\Phi^{-1}$ are no greater than $1/(n\delta_0^2).$ Indeed,
according to \eqref{eq:defdelta} and \eqref{eq:defPhi}, for all $\mathbi{u}\in\mathbb{R}^{k-1}$
   \begin{equation}\label{eq:Pn}
\mathbi{u}^T \Phi \mathbi{u} = n
\sum_{i=1}^L \frac{1}{q_i} \Big(\sum_{j=1}^k u_j q_{ij} \Big)^2
\ge n \delta^2 \Big( \sum_{i=1}^k u_i^2 \Big)
\ge n \delta_0^2  \|\mathbi{u}\|_2^2 .
\end{equation}
Using this, we have
 \begin{gather*}
\| \Phi^{-1}\mathbi{w} \|_2 \le \frac{1}{n \delta_0^2} \| \mathbi{w} \|_2
< \frac{2k^3 e^{\epsilon}}{n^{5/13} \delta_0^2}  \text{~for all~} y^n\in E_1,\\
\tr(\Phi^{-1}) \le \frac{k-1}{n\delta_0^2} < \frac{k}{n^{10/13}\delta_0^4},
  \end{gather*}
where the last inequality
 holds for all $n\ge N(k,\epsilon)$ as long as we set $N(k,\epsilon)$ to be large enough.
Combining the two inequalities above with \eqref{eq:bdE}, we deduce that for every $y^n \in E_1\cap E_2,$
\begin{align}
     \Big\| \mathbb{E}(\mathbi{U}|Y^n & =y^n) - \Phi^{-1}\mathbi{w} \Big\|_1 \nonumber\\
 & \le  \frac{16k^3 + 512}{n^{2/13} } \frac{2k^4e^{\epsilon}}{n^{5/13}\delta_0^2}
+ \int_{ (B_1)^c} \|\mathbi{u}\|_1 f_1(\mathbi{u},y^n) d\mathbi{u} \nonumber\\
 & =  \frac{32k^4 e^{\epsilon}(k^3+32)}{n^{7/13}\delta_0^2}
+ \int_{ (B_1)^c} \|\mathbi{u}\|_1 f_1(\mathbi{u},y^n) d\mathbi{u} \nonumber\\
 & \le  \frac{32k^4 e^{\epsilon}(k^3+32)}{n^{7/13}\delta_0^2}
+ \sqrt{k-1} \int_{ (B_1)^c} \|\mathbi{u}\|_2 f_1(\mathbi{u},y^n) d\mathbi{u} \nonumber\\
 & \le  \frac{32k^4 e^{\epsilon}(k^3+32)}{n^{7/13}\delta_0^2}
+ \sqrt{k-1} \int_{ (B_1)^c} \Big(\sum_{i=1}^k u_i^2 \Big)^{1/2} f_1(\mathbi{u},y^n) d\mathbi{u} \nonumber\\
& <  \frac{32k^4 e^{\epsilon}(k^3+32)}{n^{7/13}\delta_0^2}
+ \frac{\sqrt{k-1}}{n^{7/13}},\label{eq:EU}
\end{align}
where the last inequality follows by the estimate \eqref{eq:F} proved in Appendix~\ref{ap:f1} below.
This completes the proof of the lemma.
\end{proof}

\vspace*{.1in} In the next step we bound the conditional expectation $\sum_{i=1}^k \mathbb{E}  (U_i- \mathbb{E} (U_i | Y^n ) )^2$ when $Y^n\in E_1\cap E_2$. 

\begin{lemma}
 There is an integer $N(k,\epsilon)$ such that for all $n>N(k,\epsilon)$ and all
$y^n \in E_1\cap E_2$,
  \begin{equation}\label{eq:osa}
\sum_{i=1}^k \mathbb{E} [ (U_i- \mathbb{E} (U_i | Y^n=y^n ) )^2 | Y^n=y^n]\ge\Big(1 - \frac{C_1(k,\epsilon)}{n^{2/13} } \Big) 
\Big( \tr(\Phi^{-1}) + \mathbi{1}^T \Phi^{-1} \mathbi{1} \Big) 
  -  \frac{C_3(k,\epsilon)}{n^{14/13}},
  \end{equation}
  where $C_3(k,\epsilon) := 2(C_2(k,\epsilon))^2+1$.
\end{lemma}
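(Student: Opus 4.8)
\noindent The plan is to bound the sum of the posterior variances on the left of \eqref{eq:osa}, namely $\sum_{i=1}^k\Var(U_i\mid Y^n=y^n)$ (recall $U_k=-\sum_{j=1}^{k-1}U_j$), by comparing it with the corresponding variances of the Gaussian density $f_1(\cdot,y^n)$. By \eqref{eq:expf1}, $f_1(\cdot,y^n)$ is the density of $\cN(\Phi^{-1}\mathbi w,\Phi^{-1})$, so $\Var_{f_1}(U_i)=(\Phi^{-1})_{ii}$ for $i\le k-1$ and $\Var_{f_1}(U_k)=\Var_{f_1}\big(\sum_{j=1}^{k-1}U_j\big)=\mathbi 1^T\Phi^{-1}\mathbi 1$, hence $\sum_{i=1}^k\Var_{f_1}(U_i)=\tr(\Phi^{-1})+\mathbi 1^T\Phi^{-1}\mathbi 1$, which is the quantity to reproduce. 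First I would re-center: put $m_i:=(\Phi^{-1}\mathbi w)_i$ for $i\le k-1$ and $m_k:=-\sum_{j=1}^{k-1}m_j$, use the identity $\Var(U_i\mid Y^n=y^n)=\mathbb E[(U_i-m_i)^2\mid Y^n=y^n]-(\mathbb E[U_i\mid Y^n=y^n]-m_i)^2$, and sum over $i$. Writing $a_j:=\mathbb E[U_j\mid Y^n=y^n]-m_j$ for $j\le k-1$ (so that $\mathbb E[U_k\mid Y^n=y^n]-m_k=-\sum_{j=1}^{k-1}a_j$), Lemma~\ref{lemma:EPhi} controls the bias part:
\begin{equation*}
\sum_{i=1}^k(\mathbb E[U_i\mid Y^n=y^n]-m_i)^2=\sum_{j=1}^{k-1}a_j^2+\Big(\sum_{j=1}^{k-1}a_j\Big)^2\le 2\Big(\sum_{j=1}^{k-1}|a_j|\Big)^2=2\big\|\mathbb E(\mathbi U\mid Y^n=y^n)-\Phi^{-1}\mathbi w\big\|_1^2\le\frac{2(C_2(k,\epsilon))^2}{n^{14/13}}.
\end{equation*}

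\noindent For the ``variance'' part $\sum_{i=1}^k\mathbb E[(U_i-m_i)^2\mid Y^n=y^n]$ I would restrict each integral to $B_1$ (the posterior vanishes outside $B_1$, since a priori $\mathbi U$ is uniform on $B_1$) and apply the pointwise estimate $f_{\mathbi U\mid Y^n}(\mathbi u\mid y^n)\ge(1-C_1(k,\epsilon)/n^{2/13})f_1(\mathbi u,y^n)$ on $B_1$, which is immediate from \eqref{eq:ff1}. This yields $\mathbb E[(U_i-m_i)^2\mid Y^n=y^n]\ge(1-C_1/n^{2/13})\int_{B_1}(u_i-m_i)^2 f_1(\mathbi u,y^n)\,d\mathbi u$, and $\int_{B_1}(u_i-m_i)^2 f_1=\int_{\mathbb R^{k-1}}(u_i-m_i)^2 f_1-\int_{(B_1)^c}(u_i-m_i)^2 f_1$, where the first term sums over $i=1,\dots,k$ to $\tr(\Phi^{-1})+\mathbi 1^T\Phi^{-1}\mathbi 1$ by the previous paragraph. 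Combining this with the bias bound gives \eqref{eq:osa} with $C_3=2C_2^2+1$, provided the tail $T:=\sum_{i=1}^k\int_{(B_1)^c}(u_i-m_i)^2 f_1(\mathbi u,y^n)\,d\mathbi u$ satisfies $(1-C_1/n^{2/13})T\le n^{-14/13}$ for $n\ge N(k,\epsilon)$.

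\noindent The hard part is this tail bound. Since $f_1(\cdot,y^n)$ is the normalization of $\exp(-\tfrac12 h_{\mathbi v}(\cdot))$ (see \eqref{eq:hvcv}, \eqref{eq:H}) and $h_{\mathbi v}$ has the quadratic form \eqref{eq:hv} required by Proposition~\ref{Prop:U}, I would first use Proposition~\ref{Prop:inc} with $\alpha=n^{-5/13}$ to obtain $\tilde{\mathbi u}\in B_2$ with $(B_1)^c\subseteq E_{\mathbi v}(\tilde{\mathbi u},n^{1/26})$, and then Proposition~\ref{Prop:U} to conclude $\int_{(B_1)^c}f_1(\mathbi u,y^n)\,d\mathbi u\le\exp(-\tfrac12(n^{1/26}-\sqrt{k-1})^2)$ for large $n$. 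A Cauchy--Schwarz step then gives $\int_{(B_1)^c}(u_i-m_i)^2 f_1\le\big(\int_{\mathbb R^{k-1}}(u_i-m_i)^4 f_1\big)^{1/2}\big(\int_{(B_1)^c}f_1\big)^{1/2}$, where the fourth central moments of the Gaussian $f_1$ equal $3(\Phi^{-1})_{ii}^2\le 3(\tr\Phi^{-1})^2=O(n^{-2})$ by the eigenvalue bound \eqref{eq:Pn} (for $i=k$ one first bounds $(u_k-m_k)^2\le(k-1)\sum_{j=1}^{k-1}(u_j-m_j)^2$). Hence $T=O\big(n^{-1}\exp(-\tfrac14(n^{1/26}-\sqrt{k-1})^2)\big)$, which is super-polynomially small and in particular $\le n^{-14/13}$ once $n\ge N(k,\epsilon)$; alternatively $T$ can be read off from estimate \eqref{eq:F} of Appendix~\ref{ap:f1}. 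I expect the only real work to lie in this tail estimate together with the bookkeeping of the $O(n^{-2/13})$ and $O(n^{-14/13})$ error terms; the rest is routine.
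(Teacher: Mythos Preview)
Your proposal is correct and follows essentially the same structure as the paper's proof: re-center at the Gaussian mean $\Phi^{-1}\mathbi w$, use the variance identity to split into a ``bias'' part controlled by Lemma~\ref{lemma:EPhi} and a ``second-moment'' part, then lower-bound the latter via \eqref{eq:ff1} and split the $f_1$-integral into $\int_{\mathbb R^{k-1}}-\int_{(B_1)^c}$.

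The only substantive difference is in the tail estimate $T=\sum_{i=1}^k\int_{(B_1)^c}(u_i-m_i)^2 f_1$. The paper (Appendix~\ref{ap:vr}, inequality \eqref{eq:I}) bounds this by first using \eqref{eq:Pn} to replace $\|\mathbi u-\Phi^{-1}\mathbi w\|_2^2$ by $\frac{1}{n\delta_0^2}(\mathbi u-\Phi^{-1}\mathbi w)^T\Phi(\mathbi u-\Phi^{-1}\mathbi w)$, then changing variables $\mathbi z=\Phi^{1/2}(\mathbi u-\Phi^{-1}\mathbi w)$ and computing $\int_{\|\mathbi z\|^2>n^{1/13}}\|\mathbi z\|^2 e^{-\|\mathbi z\|^2/2}\,d\mathbi z$ by integration by parts together with Proposition~\ref{Prop:Z}. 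Your Cauchy--Schwarz route (fourth Gaussian moments times $\int_{(B_1)^c}f_1$) is a legitimate and arguably cleaner alternative that gives the same super-polynomial decay. One small correction: the ``alternative'' you cite, estimate \eqref{eq:F}, bounds $\int_{(B_1)^c}(\sum u_i^2)^{1/2}f_1$, not $T$; the estimate in the paper that directly bounds $T$ is \eqref{eq:I} in Appendix~\ref{ap:vr}.
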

\begin{proof}
As in Appendix~\ref{ap:f1}, given $y^n\in \cY^n,$ define $\bar{\mathbi{U}}(y^n)=(\bar{U}_1(y^n),\dots,\bar{U}_{k-1}(y^n)) $ to be
a $(k-1)$-dimensional Gaussian random vector with density function $f_{\bar{\mathbi{U}}(y^n)}(\cdot)=  f_1(\cdot,y^n)$, mean vector $\Phi^{-1}\mathbi{w}$ and covariance matrix $\Phi^{-1}.$
By Lemma \ref{lemma:EPhi}, for every $y^n \in E_1\cap E_2$ we have
$$
 \| \mathbb{E}(\mathbi{U}|Y^n=y^n) - \mathbb{E}\big( \bar{\mathbi{U}}(y^n) \big) \|_1 =
 \| \mathbb{E}(\mathbi{U}|Y^n=y^n) - \Phi^{-1}\mathbi{w}\|_1 
< \frac{C_2(k,\epsilon)}{n^{7/13}}.
$$
As usual, let $\bar{U}_k(y^n): = -\sum_{i=1}^{k-1} \bar{U}_i(y^n)$. For every $y^n \in E_1\cap E_2,$
\begin{align*}
\sum_{i=1}^k &( \mathbb{E}(U_i|Y^n=y^n) - \mathbb{E} \bar{U}_i(y^n)  )^2 \\
& =  \sum_{i=1}^{k-1} ( \mathbb{E}(U_i|Y^n=y^n) - \mathbb{E} \bar{U}_i(y^n)  )^2
+  \Big(\sum_{i=1}^{k-1} \mathbb{E}(U_i|Y^n=y^n) - 
\sum_{i=1}^{k-1}\mathbb{E} \bar{U}_i(y^n)  \Big)^2 \\
& \le  2  \| \mathbb{E}(\mathbi{U}|Y^n=y^n) - \mathbb{E} \bar{\mathbi{U}}(y^n) \|_1 ^2
< \frac{2(C_2(k,\epsilon))^2}{n^{14/13}}\\
& = \frac{C_3(k,\epsilon) - 1}{n^{14/13}}
\end{align*}
(on the second line we use the definition of $\bar U_k$).
As a result, for every $y^n \in E_1\cap E_2,$
\begin{align}
   \sum_{i=1}^k &\mathbb{E} [ (U_i - \mathbb{E} (U_i | Y^n=y^n ) )^2 | Y^n=y^n ] \nonumber\\
& \overset{(a)}{=}\sum_{i=1}^k \mathbb{E} [ (U_i- \mathbb{E}( \bar{U}_i(y^n) ) )^2 | Y^n=y^n ]
- \sum_{i=1}^k ( \mathbb{E}(U_i|Y^n=y^n) - \mathbb{E}( \bar{U}_i(y^n) ) )^2 \nonumber\\
&> \sum_{i=1}^k \mathbb{E} [ [(U_i- \mathbb{E}( \bar{U}_i(y^n) ) ])^2 | Y^n=y^n ]-  \frac{C_3(k,\epsilon)-1}{n^{14/13}}\nonumber \\
&=  \sum_{i=1}^k \int_{ B_1} (u_i- \mathbb{E}( \bar{U}_i(y^n) ) )^2 f_{\mathbi{U}|Y^n}(\mathbi{u}|Y^n=y^n) d\mathbi{u}
-  \frac{C_3(k,\epsilon) -1}{n^{14/13}} \nonumber\\
& \overset{(b)}{\ge} \Big(1 - \frac{C_1(k,\epsilon)}{n^{2/13} } \Big) \sum_{i=1}^k \int_{ B_1} (u_i- \mathbb{E}( \bar{U}_i(y^n) ) )^2 f_1(\mathbi{u},y^n) d\mathbi{u}
-  \frac{C_3(k,\epsilon)-1}{n^{14/13}} \nonumber\\
&\ge  \Big(1 - \frac{C_1(k,\epsilon)}{n^{2/13} } \Big) \sum_{i=1}^k \int_{ \mathbb{R}^{k-1}} (u_i- \mathbb{E}( \bar{U}_i(y^n) ) )^2 f_1(\mathbi{u},y^n) d\mathbi{u}
-  \frac{C_3(k,\epsilon)-1}{n^{14/13}} \nonumber\\
  & - \sum_{i=1}^k \int_{ (B_1)^c} (u_i- \mathbb{E}( \bar{U}_i(y^n) ) )^2 f_1(\mathbi{u},y^n) d\mathbi{u} \nonumber\\
& \overset{(c)}{=} \Big(1 - \frac{C_1(k,\epsilon)}{n^{2/13} } \Big) 
\sum_{i=1}^k \Var( \bar{U}_i(y^n) ) 
  -  \frac{C_3(k,\epsilon)-1}{n^{14/13}}\nonumber\\
&\hspace*{2in}   -  \int_{ (B_1)^c} \sum_{i=1}^k (u_i- \mathbb{E}( \bar{U}_i(y^n) ) )^2 f_1(\mathbi{u},y^n) d\mathbi{u}, \label{eq:Cov}
\end{align}
where $(a)$ follows by a straightforward rewriting; in $(b)$ we use \eqref{eq:ff1}; $(c)$ follows from the fact that the probability density function of $\bar{\mathbi{U}}(y^n)$ is $f_1(\cdot,y^n).$ 
Further, since $\bar{U}_k (y^n) = -\sum_{i=1}^{k-1} \bar{U}_i  (y^n),$ we have
  \begin{align}
  \sum_{i=1}^k \Var( \bar{U}_i(y^n) )&=\sum_{i=1}^{k-1}\Var( \bar{U}_i(y^n) )+\Var\Big(\sum_{i=1}^{k-1}\bar{U}_i(y^n)\Big )\nonumber\\
  &=\tr(\Phi^{-1})+\mathbi{1}^T \Phi^{-1} \mathbi{1}.\label{eq:Cov1}
  \end{align}
  Finally, the integral in \eqref{eq:Cov} is estimated in \eqref{eq:I} in Appendix \ref{ap:vr}. Using this estimate together
  with \eqref{eq:Cov1} in \eqref{eq:Cov}, we arrive at the claimed inequality \eqref{eq:osa}.
\end{proof}

Now let us take the final step toward our goal of proving \eqref{eq:ob1}.
Combining \eqref{eq:osa} with \eqref{eq:diffp} and \eqref{eq:pe2}, we bound below the optimal Bayes estimation loss 
as follows:
\begin{align}
 \sum_{i=1}^k \mathbb{E}  \Big(U_i&- \mathbb{E} (U_i | Y^n ) \Big)^2   \nonumber\\
\ge & P(Y^n \in E_1\cap E_2 ) 
\Big( \Big(1 - \frac{C_1(k,\epsilon)}{n^{2/13} } \Big) 
( \tr(\Phi^{-1}) + \mathbi{1}^T \Phi^{-1} \mathbi{1} ) 
  -  \frac{C_3(k,\epsilon)}{n^{14/13}}  \Big) \nonumber\\
\ge & \Big(1 - \frac{1 + 2^{k+1} + 3k/\delta_0}{n^{1/13}} \Big)
\Big( \Big(1 - \frac{C_1(k,\epsilon)}{n^{2/13} } \Big) 
( \tr(\Phi^{-1}) + \mathbi{1}^T \Phi^{-1} \mathbi{1} ) 
  -  \frac{C_3(k,\epsilon)}{n^{14/13}}  \Big) \nonumber\\
\ge & \Big(1  - \frac{1 + 2^{k+1} + 3k/\delta_0}{n^{1/13}}
 - \frac{C_1(k,\epsilon)}{n^{2/13} } \Big) ( \tr(\Phi^{-1}) + \mathbi{1}^T \Phi^{-1} \mathbi{1} ) 
 -  \frac{C_3(k,\epsilon)}{n^{14/13}}  \nonumber\\
> & \Big(1  - \frac{C_4(k,\epsilon)}{n^{1/13}} \Big)
 ( \tr(\Phi^{-1}) + \mathbi{1}^T \Phi^{-1} \mathbi{1} ) 
 -  \frac{C_3(k,\epsilon)}{n^{14/13}}, \nonumber
\end{align}
where in the last inequality we define 
$C_4(k,\epsilon) := 1 + 2^{k+1} + 3k/\delta_0 + C_1(k,\epsilon)$.
This establishes \eqref{eq:ob1} and completes the first step of the proof of Proposition \ref{prop:Case1}.

\vspace*{.1in}
\subsubsection{Proof of \eqref{eq:tr}}

We use the Cauchy-Schwarz inequality to bound $\tr(\Phi^{-1}) + \mathbi{1}^T \Phi^{-1} \mathbi{1}$. The result
is given in the following proposition, which is proved in Appendix~\ref{ap:bdtr}.
\begin{proposition}\label{Prop:bdtr}
\begin{equation}\label{eq:itp}
\tr(\Phi^{-1}) + \mathbi{1}^T \Phi^{-1} \mathbi{1}
\ge (k-1)
\Big( \frac{\sum_{i=1}^{k-1}\Phi_{ii}}{k}-\frac{\sum_{i\neq j}\Phi_{ij} }{k(k-1)} \Big)^{-1}.
\end{equation}
\end{proposition}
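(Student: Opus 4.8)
The plan is to convert the claimed inequality into a single application of the Cauchy--Schwarz (equivalently, AM--HM) inequality for the eigenvalues of a positive definite matrix. Throughout I would write $J=\mathbi{1}\mathbi{1}^T$ for the $(k-1)\times(k-1)$ all-ones matrix (recall $\mathbi{1}$ is the all-ones column vector of length $k-1$), so that $\tr(\Phi^{-1})+\mathbi{1}^T\Phi^{-1}\mathbi{1}=\tr\big((I+J)\Phi^{-1}\big)$. If $\Phi$ happens to be singular we interpret $\tr(\Phi^{-1})$ as $+\infty$ and the bound is trivial, so we may assume $\Phi$ is positive definite; then $I+J$ is positive definite as well (its eigenvalues are $k$ and $1$), so $(I+J)^{1/2}$ is well defined.

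First I would rewrite the right-hand side of \eqref{eq:itp} in matrix-trace form. Using $\sum_{i\neq j}\Phi_{ij}=\mathbi{1}^T\Phi\mathbi{1}-\tr\Phi$ one gets
$$
\frac{\sum_{i=1}^{k-1}\Phi_{ii}}{k}-\frac{\sum_{i\neq j}\Phi_{ij}}{k(k-1)}
=\frac{(k-1)\tr\Phi-\big(\mathbi{1}^T\Phi\mathbi{1}-\tr\Phi\big)}{k(k-1)}
=\frac{\tr\big((kI-J)\Phi\big)}{k(k-1)},
$$
so that \eqref{eq:itp} is equivalent to the product inequality
$$
\tr\big((I+J)\Phi^{-1}\big)\cdot\tr\big((kI-J)\Phi\big)\ \ge\ k(k-1)^2 .
$$

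The key algebraic observation is the identity $(I+J)(kI-J)=kI$, which follows from $J^2=(k-1)J$; hence $kI-J=k(I+J)^{-1}$ and $\tr\big((kI-J)\Phi\big)=k\,\tr\big((I+J)^{-1}\Phi\big)$. It therefore remains to prove
$$
\tr\big((I+J)\Phi^{-1}\big)\cdot\tr\big((I+J)^{-1}\Phi\big)\ \ge\ (k-1)^2 .
$$
Setting $A=I+J$ and $P:=A^{1/2}\Phi^{-1}A^{1/2}$, a symmetric positive definite $(k-1)\times(k-1)$ matrix, cyclicity of the trace gives $\tr\big((I+J)\Phi^{-1}\big)=\tr(P)$, while $P^{-1}=A^{-1/2}\Phi A^{-1/2}$ gives $\tr\big((I+J)^{-1}\Phi\big)=\tr\big(A^{-1/2}\Phi A^{-1/2}\big)=\tr(P^{-1})$. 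Writing $\lambda_1,\dots,\lambda_{k-1}>0$ for the eigenvalues of $P$, the Cauchy--Schwarz inequality yields $\tr(P)\,\tr(P^{-1})=\big(\sum_i\lambda_i\big)\big(\sum_i\lambda_i^{-1}\big)\ge(k-1)^2$, which is exactly what is needed. Equality holds precisely when all $\lambda_i$ coincide, i.e. when $\Phi$ is a scalar multiple of $I+J$.

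There is no genuine obstacle in this argument: the only points requiring care are the bookkeeping that rewrites the entrywise expression on the right-hand side of \eqref{eq:itp} as $\tr\big((kI-J)\Phi\big)$, and the identity $(I+J)(kI-J)=kI$; once these are in place the bound is an immediate consequence of Cauchy--Schwarz.
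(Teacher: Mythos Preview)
Your proof is correct and, in fact, somewhat cleaner than the paper's. Both arguments ultimately rest on Cauchy--Schwarz, but the reductions differ. The paper (Proposition~\ref{Prop:Zac}) diagonalizes $\Phi$, expands $\mathbi{1}=\sum_i\alpha_i v_i$ in the eigenbasis, and then applies Cauchy--Schwarz to $\big(\sum_i\frac{1+\alpha_i^2}{\lambda_i}\big)\big(\sum_i\lambda_i(k-\alpha_i^2)\big)$, followed by a second elementary inequality $(1+x)(k-x)\ge k$ for $0\le x\le k-1$. Your route avoids this two-step bound: the matrix identity $(I+J)(kI-J)=kI$ collapses the problem directly to $\tr(P)\,\tr(P^{-1})\ge(k-1)^2$ for the single positive definite matrix $P=(I+J)^{1/2}\Phi^{-1}(I+J)^{1/2}$, which is a one-line Cauchy--Schwarz. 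The equality case also falls out more transparently in your version: all eigenvalues of $P$ equal forces $\Phi$ to be a scalar multiple of $I+J$, matching the paper's condition \eqref{eq:eqcon}. The paper's approach does have the minor advantage of making the role of the expansion coefficients $\alpha_i$ explicit, which feeds into the converse part of Proposition~\ref{Prop:Zac}, but for the inequality itself your argument is shorter and more structural.
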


Now let us further bound the term on right-hand side of \eqref{eq:itp}.
Recalling the definition of $\Phi$ in \eqref{eq:altPhi}, we write it out explicitly and
perform a series of straightforward manipulations:
  \begin{align}
  \frac{\sum_{i=1}^{k-1}\Phi_{ii}}{k}&-\frac{\sum_{i\neq j}\Phi_{ij} }{k(k-1)} \nonumber\\
& = \frac{n}{k(k-1)} \sum_{i=1}^L \frac{1}{q_i} 
\Big( (k-1)  \sum_{j=1}^{k-1} (q_{ij}-q_{ik})^2  - 
\Big(\sum_{j=1}^{k-1} \Big(\sum_{1\le j'\le k-1, j'\neq j} (q_{ij'}-q_{ik}) \Big) (q_{ij}-q_{ik}) \Big)  \Big)\nonumber \\
  & = \frac{n}{k(k-1)} \sum_{i=1}^L \frac{1}{q_i} 
\Big(  k  \sum_{j=1}^{k-1} (q_{ij}-q_{ik})^2  - 
\Big(\sum_{j=1}^{k-1} \Big(\sum_{j'=1}^{k-1} (q_{ij'}-q_{ik}) \Big) (q_{ij}-q_{ik}) \Big)  \Big)\nonumber \\
& = \frac{n}{k(k-1)} \sum_{i=1}^L \frac{1}{q_i} 
\Big(  k  \sum_{j=1}^{k-1} (q_{ij}-q_{ik})^2  - 
\Big(\sum_{j=1}^{k-1}  (q_{ij}-q_{ik}) \Big)^2  \Big) \nonumber\\
& = \frac{n}{k(k-1)} \sum_{i=1}^L \frac{1}{q_i} 
\Big\{  k  \sum_{j=1}^{k-1} q_{ij}^2 - 2kq_{ik} \sum_{j=1}^{k-1} q_{ij} 
+ k(k-1) q_{ik}^2  \nonumber \\
 & \hspace*{1in}   - \Big( \Big( \sum_{j=1}^{k-1} q_{ij} \Big)^2 
-2(k-1) q_{ik} \sum_{j=1}^{k-1} q_{ij} + (k-1)^2 q_{ik}^2 \Big) \Big\} \nonumber\\
& =   \frac{n}{k(k-1)} \sum_{i=1}^L \frac{1}{q_i} 
\Big(  k  \sum_{j=1}^{k-1} q_{ij}^2 - 2q_{ik} \sum_{j=1}^{k-1} q_{ij} 
+ (k-1) q_{ik}^2 
 -  \Big( \sum_{j=1}^{k-1} q_{ij} \Big)^2  \Big)\nonumber \\
& =  \frac{n}{k(k-1)} \sum_{i=1}^L \frac{1}{q_i} 
\Big(  k  \sum_{j=1}^{k} q_{ij}^2 
 -  \Big( \sum_{j=1}^{k} q_{ij} \Big)^2  \Big) \nonumber\\
& = \frac{n}{k(k-1)} \sum_{i=1}^L  q_i 
\Big(  k  \sum_{j=1}^{k} \Big( \frac{q_{ij}}{q_i} \Big)^2 
 -  k^2  \Big)  \label{eq:rhp1}\\
& \le \frac{n}{k(k-1)} k^2 (e^{\epsilon} - 1)^2  
 \frac{d^\ast (k-d^\ast)} {(d^\ast e^{\epsilon} + k - d^\ast)^2}
 \sum_{i=1}^L  q_i \label{eq:rhp2} \\
& = \frac{n}{k-1} k (e^{\epsilon} - 1)^2  
 \frac{d^\ast (k-d^\ast)} {(d^\ast e^{\epsilon} + k - d^\ast)^2} \label{eq:rhp3}
 \end{align}
  where \eqref{eq:rhp1} follows by definition of $q_i$ in Table~\ref{definitions}, and the inequality used to arrive at
\eqref{eq:rhp2} is proved in \eqref{eq:L}, Appendix~\ref{app:rhp1}.

Substituting \eqref{eq:rhp3} into \eqref{eq:itp}, we obtain inequality \eqref{eq:tr}.

Now we are ready to prove \eqref{eq:indv} using \eqref{eq:ob1} and \eqref{eq:tr}. For this, we set 
$$
C(k,\epsilon) :=
C_4(k,\epsilon) M(k,\epsilon)
+ C_3(k,\epsilon).
$$
We obtain
$$
r_{k,n}^{\ell_2^2}(\mathbi{Q}) 
\ge  \sum_{i=1}^k \mathbb{E}  (U_i- \mathbb{E} (U_i | Y^n ) )^2  \ge
\frac 1n M(k,\epsilon)
 - \frac{C(k,\epsilon)} {n^{14/13} }.
$$
This completes the proof of \eqref{eq:indv} for the case $\delta \ge \delta_0$.

\subsection{\bf Case 2: $\delta < \delta_0$}

This case is much easier to handle: We can rely on a straightforward application of Le Cam's method \cite{LeCam12} (see also 
\cite[Lemma 1]{Yu97}). 
 
Our goal is again to prove \eqref{eq:indv}. We use standard distance functions on distributions defined on finite sets $\cY.$ The {\em KL divergence} between
two such distributions $\mathbi{m}_1$ and $\mathbi{m}_2$ is defined as
$$
D_{\kl}(\mathbi{m}_1 \| \mathbi{m}_2) := \sum_{y\in\cY}  \mathbi{m}_1(y) \log \frac{\mathbi{m}_1(y)}
{\mathbi{m}_2(y)}.
$$
The {\em total variation distance} between $\mathbi{m}_1$ and $\mathbi{m}_2$ is defined as
\begin{equation}\label{eq:deftv}
\| \mathbi{m}_1 - \mathbi{m}_2 \|_{\TV} := \max_{A\subseteq \cY} |\mathbi{m}_1(A) - \mathbi{m}_2(A)|
=\frac{1}{2}\sum_{y\in\cY} |\mathbi{m}_1(y) - \mathbi{m}_2(y)|.
\end{equation}

According to \eqref{eq:defdt}, there is $\tilde{\mathbi{u}} \in\mathbb{R}^{k-1}$ such that\footnote{we again use the convention that
$\tilde{u}_k = -\sum_{i=1}^{k-1} \tilde{u}_i$.}
\begin{equation}\label{eq:tud}
\sum_{i=1}^k \tilde{u}_i^2=1 \text{~and~}
\Big(\sum_{i=1}^L \frac{1}{q_i} \Big(\sum_{j=1}^k \tilde{u}_j q_{ij} \Big)^2\Big)^{1/2} = \delta.
\end{equation}
With this $\tilde{\mathbi{u}}$ in mind, let
  $$
\mathbi{p}_1 = \mathbi{p}_U \text{~and~}
\mathbi{p}_2 = \mathbi{p}_U + \tilde{\mathbi{u}}/\sqrt{n \delta^2}
  $$
where as before, $\mathbi{p}_U=(1/k,1/k,\dots,1/k)$ denotes the uniform pmf.

Note that some of the coordinates of $\tilde{\mathbi{u}}$ can be negative, but we can ensure that
$\mathbi{p}_2 \in \Delta_k$ for all $n\ge N(k,\epsilon)$ as long as $N(k,\epsilon)$ is sufficiently large. Consequently,
$$
r_{k,n}^{\ell_2^2} (\mathbi{Q}) = \inf_{\hat{\mathbi{p}}} \sup_{\mathbi{p}\in \Delta_k} 
\underset{Y^n\sim (\mathbi{p} \mathbi{Q})^n}{\mathbb{E}} \ell_2^2(\hat{\mathbi{p}}(Y^n), \mathbi{p})
\ge \frac{1}{2} \inf_{\hat{\mathbi{p}}}
\Big( \underset{Y^n\sim \mathbi{m}_1^n}{\mathbb{E}} \ell_2^2(\hat{\mathbi{p}}(Y^n), \mathbi{p}_1) +
\underset{Y^n\sim \mathbi{m}_2^n}{\mathbb{E}} \ell_2^2(\hat{\mathbi{p}}(Y^n), \mathbi{p}_2 ) \Big),
$$
where $\mathbi{m}_1=\mathbi{p}_1 \mathbi{Q}$ and 
$\mathbi{m}_2 = \mathbi{p}_2 \mathbi{Q}$.

For a given estimator $\hat{\mathbi{p}}:\cY^n\to\mathbb{R}^k$,  define the set 
   $$
   \cK_1:= \{y^n\in\cY^n: 
\ell_2(\hat{\mathbi{p}}(y^n),\mathbi{p}_1)
\ge \ell_2(\hat{\mathbi{p}}(y^n),\mathbi{p}_2)\}
   $$
By the triangle inequality, we have
\begin{align*}
\ell_2(\hat{\mathbi{p}}(y^n),\mathbi{p}_1) \ge \frac{1}{2}
\ell_2(\mathbi{p}_1,\mathbi{p}_2) = \frac{1}{2\sqrt{n\delta^2}}
\text{~for all~} y^n\in \cK_1, \\
\ell_2(\hat{\mathbi{p}}(y^n),\mathbi{p}_2) \ge \frac{1}{2}
\ell_2(\mathbi{p}_1,\mathbi{p}_2) = \frac{1}{2\sqrt{n\delta^2}}
\text{~for all~} y^n\in \cK_1^c.
\end{align*}
Therefore, for any estimator $\hat{\mathbi{p}}$,
\begin{align*}
E&:=\underset{Y^n\sim \mathbi{m}_1^n}{\mathbb{E}} \ell_2^2(\hat{\mathbi{p}}(Y^n), \mathbi{p}_1) +
\underset{Y^n\sim \mathbi{m}_2^n}{\mathbb{E}} \ell_2^2(\hat{\mathbi{p}}(Y^n), \mathbi{p}_2 )\\ 
&\ge  \frac{1}{4 n\delta^2} \Big(\mathbi{m}_1^n (Y^n\in \cK_1)
+ \mathbi{m}_2^n (Y^n\in \cK_1^c) \Big) \\
&=  \frac{1}{4 n\delta^2} \Big(1 - \mathbi{m}_2^n (Y^n\in \cK_1) + \mathbi{m}_1^n (Y^n\in \cK_1) \Big) \\
&\ge   \frac{1}{4 n\delta^2} \Big(1 - 
\sup_{\cK\subseteq \cY^n}\Big(\mathbi{m}_2^n (Y^n\in \cK) - \mathbi{m}_1^n (Y^n\in \cK) \Big) \Big) \\
&= \frac{1}{4 n\delta^2} \Big(1 - 
\| \mathbi{m}_2^n - \mathbi{m}_1^n \|_{\TV} \Big) ,
\end{align*}
where the last step follows by definition \eqref{eq:deftv}. Using Pinsker's inequality\footnote{Pinsker's inequality 
asserts that $\|P_1-P_2\|_{\TV}^2\le \frac12 D_{\kl}(P_1\|P_2)$ for any two probability measures $P_1,P_2$.}, we obtain
\begin{align*}
E&\ge \frac{1}{4 n\delta^2} \Big(1 - 
\sqrt{\frac{1}{2}D_{\kl}(\mathbi{m}_2^n \| \mathbi{m}_1^n)} \Big) 
= \frac{1}{4 n\delta^2} \Big(1 - 
\sqrt{\frac{n}{2}D_{\kl}(\mathbi{m}_2 \| \mathbi{m}_1)} \Big).
\end{align*}
Let us write out this expression explicitly, recalling the fact that the original output alphabet is $\cY=\{1,2,\dots,L'\}$ and using in succession \eqref{eq:qpr}, \eqref{eq:eqclass}, \eqref{eq:tq}:
    \begin{align*}
E&\ge \frac{1}{4 n\delta^2} \Biggl(1 - 
\sqrt{\frac{n}{2} \sum_{i=1}^{L'} \Biggl\{\Big(
\sum_{j=1}^k \mathbi{Q}(i|j) \Big(\frac{1}{k}+\frac{\tilde{u}_j}{\sqrt{n\delta^2}} \Big)  \Big)
\log \frac{\sum_{j=1}^k \mathbi{Q}(i|j) \Big(\frac{1}{k}+\frac{\tilde{u}_j}{\sqrt{n\delta^2}} \Big)}
{\sum_{j=1}^k \mathbi{Q}(i|j) \frac{1}{k} }  \Biggr\}  } \Biggr) \\
&= \frac{1}{4 n\delta^2} \Biggl(1 - 
\sqrt{\frac{n}{2} \sum_{i=1}^{L'} \Biggl\{
\Big(q_i'+\sum_{j=1}^k \mathbi{Q}(i|j) \frac{\tilde{u}_j}{\sqrt{n\delta^2}} \Big) 
\log \frac{q_i'+\sum_{j=1}^k \mathbi{Q}(i|j) \frac{\tilde{u}_j}{\sqrt{n\delta^2}}  }
{q_i' }  \Biggr\}  } \Biggr) \\
&= \frac{1}{4 n\delta^2} \Biggl(1 - 
\sqrt{\frac{n}{2} \sum_{i=1}^{L'} \Biggl\{ q_i'
\Big(1 + \sum_{j=1}^k \frac{\mathbi{Q}(i|j)}{q_i'} \frac{\tilde{u}_j}{\sqrt{n\delta^2}} \Big) 
\log \Big( 1+\sum_{j=1}^k \frac{\mathbi{Q}(i|j)}{q_i'} \frac{\tilde{u}_j}{\sqrt{n\delta^2}}  \Big)  \Biggr\}  } \Biggr) \\
&= \frac{1}{4 n\delta^2} \Biggl(1 - 
\sqrt{\frac{n}{2} \sum_{i=1}^{L} \Biggl\{ \Big(\sum_{a\in A_i} q_a' \Big)
\Big(1 + \sum_{j=1}^k \frac{q_{ij}}{q_i} \frac{\tilde{u}_j}{\sqrt{n\delta^2}} \Big) 
\log \Big( 1+\sum_{j=1}^k \frac{q_{ij}}{q_i} \frac{\tilde{u}_j}{\sqrt{n\delta^2}}    \Big) \Biggr\} } \Biggr) \\
&= \frac{1}{4 n\delta^2} \Biggl(1 - 
\sqrt{\frac{n}{2} \sum_{i=1}^{L}  \Biggl\{
\Big(q_i + \sum_{j=1}^k q_{ij} \frac{\tilde{u}_j}{\sqrt{n\delta^2}} \Big) 
\log \Big( 1+\sum_{j=1}^k \frac{q_{ij}}{q_i} \frac{\tilde{u}_j}{\sqrt{n\delta^2}}  \Big)  \Biggr\}  } \Biggr) .
\end{align*}
Bounding above the logarithm by $\log(1+1)\le x$, we further obtain
\begin{align}
E&\ge  \frac{1}{4 n\delta^2} \Biggl(1 - 
\sqrt{\frac{n}{2} \sum_{i=1}^{L}  \Biggl\{
\Big(q_i + \sum_{j=1}^k q_{ij} \frac{\tilde{u}_j}{\sqrt{n\delta^2}} \Big) 
 \Big( \sum_{j=1}^k \frac{q_{ij}}{q_i} \frac{\tilde{u}_j}{\sqrt{n\delta^2}}  \Big)  \Biggr\}  } \Biggr) \nonumber\\
&= \frac{1}{4 n\delta^2} \Biggl(1 - 
\sqrt{\frac{n}{2} \Big( \sum_{i=1}^{L}\sum_{j=1}^k q_{ij} \frac{\tilde{u}_j}{\sqrt{n\delta^2}} 
+  \sum_{i=1}^{L} \frac{1}{q_i} \Big(\sum_{j=1}^k q_{ij} \frac{\tilde{u}_j}{\sqrt{n\delta^2}} \Big)^2  \Big) } \Biggr) \nonumber\\
& = \frac{1}{4 n\delta^2} \Biggl(1 - 
\sqrt{\frac{1}{2\delta^2}   \sum_{i=1}^{L} \frac{1}{q_i} \Big(\sum_{j=1}^k q_{ij} \tilde{u}_j \Big)^2   } \Biggr)
\label{eq:h} \\
&= \frac{1}{4 n\delta^2} \Big(1 - \sqrt{\frac{1}{2}} \Big)
> \frac{1}{16 n\delta^2} > \frac{1}{16 n\delta_0^2} \label{eq:i}\\
&= \frac{2}{n} M(k,\epsilon) \label{eq:j} \\
&{\ge \frac 1n M(k,\epsilon)- \frac{C(k,\epsilon)} {n^{14/13} }}
 , \nonumber
\end{align}
where \eqref{eq:h} follows from \eqref{eq:qu}, namely from the equality 
$\sum_{i=1}^{L}\sum_{j=1}^k q_{ij} \tilde{u}_j   = 0$; \eqref{eq:i} follows from the definition of $\tilde{\mathbi{u}}$ in
\eqref{eq:tud}; \eqref{eq:j}
follows from \eqref{eq:defd0}. (In the last step we make a somewhat arbitrary transition to match the inequality \eqref{eq:ob1} 
established for the case $\delta\ge \delta_0$.)

This completes the proof of \eqref{eq:indv} for the case $\delta < \delta_0$.

\section{Outlook: Open questions}\label{Sect:FW}
\subsection{Uniqueness of the optimal privatization scheme}
We believe that our privatization scheme $\mathbi{Q}_{k,\epsilon,d^\ast}$ is essentially the unique optimal choice.
More precisely, recall that we reduced the output alphabet from the original set to the set of equivalence classes; see 
Sec.~\ref{Sect:merge}. We conjecture that, under the assumption that $\mathbi{Q} \in \cD_{\epsilon,E}$ and after
the alphabet reduction, any scheme $\mathbi{Q}$ that is different from $\mathbi{Q}_{k,\epsilon,d^\ast},$
will entail a strictly larger estimation loss value. In other words, for $n$ large enough
$$
r_{k,n}^{\ell_2^2} (\mathbi{Q}) \ge r_{k,n}^{\ell_2^2} (\mathbi{Q}_{k,\epsilon,d^\ast}, \hat{\mathbi{p}}),
$$
where $\hat{\mathbi{p}}$ is given in \eqref{eq:emp}.
Formally, we can phrase this conjecture as follows: 
\begin{conjecture}\label{cj:u1}
For $\mathbi{Q} \in \cD_{\epsilon,E}$,
\begin{equation}\label{eq:fut}
\lim_{n\to \infty} n r_{k,n}^{\ell_2^2} (\mathbi{Q}) =
 M(k,\epsilon)
\end{equation}
if and only if $\mathbi{Q} = \mathbi{Q}_{k,\epsilon,d^\ast}$ (after accounting for the alphabet reduction).
\end{conjecture}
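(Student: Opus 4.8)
\medskip
\noindent\textbf{Proof proposal.}
The ``if'' direction is immediate from Theorem~\ref{Thm:Main}: if $\mathbi{Q}=\mathbi{Q}_{k,\epsilon,d^\ast}$, then by \eqref{eq:rd}--\eqref{eq:rdstar} the empirical estimator \eqref{eq:emp} attains worst-case risk exactly $\frac1n M(k,\epsilon)$, so $r_{k,n}^{\ell_2^2}(\mathbi{Q}_{k,\epsilon,d^\ast})\le\frac1n M(k,\epsilon)$, while $r_{k,n}^{\ell_2^2}(\mathbi{Q}_{k,\epsilon,d^\ast})\ge r_{\epsilon,k,n}^{\ell_2^2}\ge\frac1n M(k,\epsilon)-O(n^{-14/13})$; hence $nr_{k,n}^{\ell_2^2}(\mathbi{Q}_{k,\epsilon,d^\ast})\to M(k,\epsilon)$. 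For the ``only if'' direction I would proceed in three stages.

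\emph{Stage 1: a pointwise limit.} For a fixed $\mathbi{Q}\in\cD_{\epsilon,E}$ with $\delta(\mathbi{Q})>0$, one has $\lim_{n\to\infty}nr_{k,n}^{\ell_2^2}(\mathbi{Q})=\Psi(\mathbi{Q})$, where $\Psi(\mathbi{Q}):=n\,(\tr(\Phi^{-1})+\mathbi{1}^T\Phi^{-1}\mathbi{1})$ is independent of $n$ because $\Phi=\Phi(n,\mathbi{Q})$ is linear in $n$. The lower bound $\liminf\ge\Psi(\mathbi{Q})$ is exactly the Case~1 analysis of Section~\ref{Sect:Main} carried out for a single $\mathbi{Q}$: for a fixed scheme the split at $\delta_0$ is unnecessary (one may run the refined LAN argument with $\delta(\mathbi{Q})/2$ in place of $\delta_0$) and the resulting constants in \eqref{eq:ob1} may depend on $\mathbi{Q}$; if $\delta(\mathbi{Q})=0$ there are two distinct $\mathbi{p}$ with the same marginal $\mathbi{p}\mathbi{Q}$, so $nr_{k,n}^{\ell_2^2}(\mathbi{Q})\to\infty$. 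The matching upper bound $\limsup\le\Psi(\mathbi{Q})$ comes from exhibiting an asymptotically efficient estimator for the regular parametric model $\mathbi{m}=\mathbi{p}\mathbi{Q}$ (a truncated maximum-likelihood or suitably weighted moment estimator), with the constant-order remainder controlled along the lines of the proof of Theorem~\ref{Thm:Main}. Hence \eqref{eq:fut} holds if and only if $\Psi(\mathbi{Q})=M(k,\epsilon)$.

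\emph{Stage 2: the equality analysis.} By the chain \eqref{eq:itp}--\eqref{eq:rhp3}, $\Psi(\mathbi{Q})=M(k,\epsilon)$ forces equality both in the Cauchy--Schwarz step \eqref{eq:itp} and in \eqref{eq:rhp2}. Describe each reduced output symbol $i$ by the subset $S_i\subseteq[k]$ giving its proportionality class in $\{1,e^\epsilon\}^k$, with weight $c_i>0$. A short computation gives $k\sum_j(q_{ij}/q_i)^2-k^2=k^2(e^\epsilon-1)^2\,|S_i|(k-|S_i|)/(|S_i|e^\epsilon+k-|S_i|)^2$; since $\sum_i q_i=1$, equality in \eqref{eq:rhp2} is equivalent (when $d^\ast$ is the unique minimizer in \eqref{eq:dstar}) to $|S_i|=d^\ast$ for every used symbol. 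Writing out $\Phi$ on such a family, equality in \eqref{eq:itp} is equivalent to $\Phi\propto I+J$, which --- using that $\sum_{S\ni j}c_S$ is constant, an automatic consequence of stochasticity --- reduces to the requirement that $\sum_{S\ni\{j,j'\}}c_S$ be the same for every pair $\{j,j'\}\subseteq[k]$, i.e.\ that $(c_S)$ be a nonnegative \emph{$2$-balanced} weighting of the $d^\ast$-subsets of $[k]$.

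\emph{Stage 3: the main obstacle, and a correction.} The conjecture is now equivalent to: \emph{the uniform weighting is the unique nonnegative $2$-balanced weighting of the $d^\ast$-subsets of $[k]$}. This is precisely where the plan breaks. By Wilson's theorem the space of $2$-balanced weightings of $d$-subsets has dimension $\binom kd-\binom k2$ whenever $k\ge d+2$, so for $d^\ast\ge 3$ it is far from one-dimensional, and it contains \emph{nonnegative} non-uniform members --- namely incidence vectors of non-complete $2$-$(k,d^\ast,\lambda)$ designs. Concretely, take $k=7$ and $e^\epsilon=1.4$, for which $d^\ast=3$ is the unique minimizer in \eqref{eq:dstar}: the Fano plane then yields $\mathbi{Q}_{\mathrm{Fano}}\in\cD_{\epsilon,E}$ (seven output symbols, one per line) with $\Phi(n,\mathbi{Q}_{\mathrm{Fano}})=\Phi(n,\mathbi{Q}_{7,\epsilon,3})$, hence $\lim_n nr_{7,n}^{\ell_2^2}(\mathbi{Q}_{\mathrm{Fano}})=M(7,\epsilon)$, although $\mathbi{Q}_{\mathrm{Fano}}\ne\mathbi{Q}_{7,\epsilon,3}$ even after alphabet reduction. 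Thus Conjecture~\ref{cj:u1} is false as stated; what the argument above does establish is the corrected statement that \eqref{eq:fut} holds exactly for the nonnegative $2$-balanced schemes on $d^\ast$-subsets, a class that coincides with $\{\mathbi{Q}_{k,\epsilon,d^\ast}\}$ precisely when no nontrivial $2$-$(k,d^\ast,\lambda)$ design exists (in particular whenever $d^\ast\le 2$, and for all small enough $k$).
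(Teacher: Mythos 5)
First, a point of order: the statement you have addressed is stated in the paper only as a \emph{conjecture}. The paper gives no proof of it --- it only lists the necessary conditions $(i)$--$(iii)$ that follow it --- so there is nothing to compare your argument against except those remarks. Your ``if'' direction is correct and is exactly what Theorem~\ref{Thm:Main} together with \eqref{eq:rd}--\eqref{eq:rdstar} yields. Your Stage~2 equality analysis of \eqref{eq:itp} and \eqref{eq:rhp2} is also correct: equality forces every used output symbol to have $|S_i|=d^\ast$ high entries and forces the weights $c_S$ to have constant pair-sums, and your observation that for $k=7$, $e^\epsilon=1.4$ (so $d^\ast=3$ uniquely) the Fano plane gives a scheme $\mathbi{Q}_{\mathrm{Fano}}\in\cD_{\epsilon,E}$, distinct from $\mathbi{Q}_{7,\epsilon,3}$ after alphabet reduction, with $\Phi(n,\mathbi{Q}_{\mathrm{Fano}})=\Phi(n,\mathbi{Q}_{7,\epsilon,3})$, is genuine and valuable: it shows that the paper's conditions $(i)$ and $(ii)$, which constrain only the Fisher information at $\mathbi{p}_U$, cannot by themselves imply the uniqueness claim.

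However, your claimed \emph{disproof} of the conjecture has a gap at Stage~1, precisely in the direction your counterexample needs. The identity $\Phi(n,\mathbi{Q}_{\mathrm{Fano}})=\Phi(n,\mathbi{Q}_{7,\epsilon,3})$ concerns only the Fisher information at the uniform distribution, whereas $r_{k,n}^{\ell_2^2}(\mathbi{Q})$ is a supremum over all of $\Delta_k$; the asserted upper bound $\limsup_n n\,r_{k,n}^{\ell_2^2}(\mathbi{Q})\le\Psi(\mathbi{Q})$ with $\Psi$ computed at $\mathbi{p}_U$ is not proved and is not true for a general $\mathbi{Q}$. This is exactly what the paper's necessary condition $(iii)$ records: one needs $\tr\big(I(p_1,\dots,p_{k-1})^{-1}\big)+\mathbi{1}^T I(p_1,\dots,p_{k-1})^{-1}\mathbi{1}\le \frac1n M(k,\epsilon)$ for \emph{every} $\mathbi{p}\in\Delta_k$, not just at $\mathbi{p}_U$. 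At a non-uniform $\mathbi{p}$ the two Fisher forms are, up to a common factor, $\sum_{\text{lines}\,S}U_S^2/(1+(e^\epsilon-1)P_S)$ versus $\frac15\sum_{\text{all }S}U_S^2/(1+(e^\epsilon-1)P_S)$ with $U_S=\sum_{j\in S}u_j$ and $P_S=\sum_{j\in S}p_j$, and these no longer coincide; you have not shown that the Fano scheme's worst case over the simplex is still $\frac1n M(7,\epsilon)+o(1/n)$, nor that an estimator attains the information bound uniformly over $\Delta_k$ (including near its boundary, where the local asymptotic theory degenerates --- this uniformity is exactly the issue the paper's long proof is designed to handle). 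As it stands, the Fano plane refutes the sufficiency of conditions $(i)$--$(ii)$, not the conjecture itself; the correct conclusion from your work is that any counterexample must be a nonnegative $2$-balanced weighting of $d^\ast$-subsets, and whether such a scheme actually satisfies \eqref{eq:fut} remains open.
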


Let us list some necessary conditions for this to hold. 

\vspace*{.1in}$(i)$ From the proof in Section~\ref{Sect:Main} we know that
\begin{equation}\label{eq:unb}
\liminf_{n\to \infty} n r_{k,n}^{\ell_2^2} (\mathbi{Q}) \ge
n ( \tr(\Phi^{-1}) + \mathbi{1}^T \Phi^{-1} \mathbi{1} ).
\end{equation}
Therefore,  a necessary condition for \eqref{eq:fut} to hold is
\begin{equation}\label{eq:futA}
 \tr(\Phi^{-1}) + \mathbi{1}^T \Phi^{-1} \mathbi{1} =\frac 1n  M(k,\epsilon).
\end{equation}
According to \eqref{eq:rhp2}, \eqref{eq:L}, equality in \eqref{eq:futA} implies that any (asymptotically) optimal privatization scheme satisfies the condition that each vector $(Q(i|j), j=1,\dots,k)$ is proportional to one of the vectors in the set 
$\{(v_1,v_2,\dots,v_k) \in \{1, e^\epsilon\}^k: v_1+\dots+v_k=d^\ast e^\epsilon + k-d^\ast \}$, i.e., after normalizing, it contains exactly $d^\ast$ entries of $e^\epsilon$ and $k-d^\ast$ entries $1$. 

\vspace*{.1in}
$(ii)$
The converse part of Prop.~\ref{Prop:Zac} (specifically, the claim in \eqref{eq:eqcon}) gives another set of necessary conditions for \eqref{eq:futA} to hold.

\vspace*{.1in}
$(iii)$
Note that \eqref{eq:unb} is obtained by choosing $\mathbi{p}$ from a neighborhood of the uniform distribution $\mathbi{p}_U$. A similar bound can be obtained by choosing $\mathbi{p}$ to be in the neighborhood of any point in the probability simplex $\Delta_k$.
Formally speaking, the observable random variables in our problem are $Y^n$, and the unknown parameters are
$(p_1,p_2,\dots,p_{k-1})$. Denoting by $I(p_1,p_2,\dots,p_{k-1})$ the Fisher information matrix. As shown in Appendix~\ref{ap:LAN}, $$
\Phi(n,\mathbi{Q}) = I(1/k,1/k,\dots,1/k).
$$
Similarly to \eqref{eq:unb}, one can show that
   $$
\liminf_{n\to \infty} n r_{k,n}^{\ell_2^2} (\mathbi{Q}) \ge
n  \tr( (I(p_1,p_2,\dots,p_{k-1}))^{-1}  + \mathbi{1}^T (I(p_1,p_2,\dots,p_{k-1}))^{-1} \mathbi{1} )
   $$
for all $(p_1,p_2,\dots,p_{k-1},1-p_1-\dots - p_{k-1})\in \Delta_k$.
Therefore another set of necessary conditions for \eqref{eq:fut} to hold is
$$
\tr( (I(p_1,p_2,\dots,p_{k-1}))^{-1} ) + \mathbi{1}^T (I(p_1,p_2,\dots,p_{k-1}))^{-1} \mathbi{1} 
\le \frac 1n M(k,\epsilon)
$$
for all $(p_1,p_2,\dots,p_{k-1},1-p_1-\dots - p_{k-1})\in \Delta_k$.

We believe that the conditions listed above imply the uniqueness claim of the privatization mechanism 
$\mathbi{Q}_{k,\epsilon,d^\ast}$.

We conclude by suggesting an even stronger conjecture which we also believe to be true.
\begin{conjecture}
For all $\mathbi{Q}$ with finite output alphabet,
$$
\lim_{n\to \infty} n r_{k,n}^{\ell_2^2} (\mathbi{Q}) =
 M(k,\epsilon)
$$
if and only if $\mathbi{Q} = \mathbi{Q}_{k,\epsilon,d^\ast}$ (after accounting for the alphabet reduction).
\end{conjecture}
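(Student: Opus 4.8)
\medskip\noindent\emph{A possible approach.}
The plan is to follow the roadmap already sketched in Section~\ref{Sect:FW} for Conjecture~\ref{cj:u1} and to upgrade the necessary conditions $(i)$--$(iii)$ listed there to a full rigidity statement (reading $\mathbi{Q}$, as the context of the paper requires, as an $\epsilon$-locally private scheme with finite output alphabet). The ``if'' direction is immediate: merging output symbols whose columns are proportional does not change the sufficient statistic $(t_i(y^n))_i$ and hence leaves $r_{k,n}^{\ell_2^2}(\mathbi{Q})$ unchanged, so if $\mathbi{Q}$ reduces to $\mathbi{Q}_{k,\epsilon,d^\ast}$ then $n\,r_{k,n}^{\ell_2^2}(\mathbi{Q}) = n\,r_{k,n}^{\ell_2^2}(\mathbi{Q}_{k,\epsilon,d^\ast})$, and Theorem~\ref{Thm:Main} together with \eqref{eq:rd}--\eqref{eq:rdstar} gives $n\,r_{k,n}^{\ell_2^2}(\mathbi{Q}_{k,\epsilon,d^\ast}) \to M(k,\epsilon)$. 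For the ``only if'' direction, assume $\lim_{n\to\infty} n\,r_{k,n}^{\ell_2^2}(\mathbi{Q}) = M(k,\epsilon)$ and take $\mathbi{Q}$ already in reduced form with never-occurring symbols deleted. First dispose of the degenerate case $\delta(\mathbi{Q})=0$: then there is $\tilde{\mathbi{u}}\neq\mathbi{0}$ with $\sum_{j=1}^k\tilde u_j q_{ij}=0$ for every $i$, so $\mathbi{p}_U$ and $\mathbi{p}_U+c\tilde{\mathbi{u}}$ induce identical marginals on $\cY$ for all sufficiently small $c>0$, and the two-point argument of Case~2, in an even simpler form, shows that $r_{k,n}^{\ell_2^2}(\mathbi{Q})$ is bounded below by a positive constant, whence $n\,r_{k,n}^{\ell_2^2}(\mathbi{Q})\to\infty\neq M(k,\epsilon)$. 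Therefore $\delta(\mathbi{Q})>0$, $\Phi=\Phi(n,\mathbi{Q})$ is positive definite, and \eqref{eq:unb} is available (for a fixed $\mathbi{Q}$ the Case~1 argument goes through with $\mathbi{Q}$-dependent error constants, which is all that is needed here); combined with \eqref{eq:tr} it forces the equality $n\big(\tr(\Phi^{-1}) + \mathbi{1}^T\Phi^{-1}\mathbi{1}\big) = M(k,\epsilon)$, so equality must hold both in the Cauchy--Schwarz step of Proposition~\ref{Prop:bdtr} and in the inequality \eqref{eq:rhp2}.

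I would then read off the structural consequences of these equalities, exactly as in items $(i)$--$(ii)$ of Section~\ref{Sect:FW}. Equality in \eqref{eq:rhp2}, via the bound \eqref{eq:L}, forces for every surviving symbol $A_i$ that $(q_{i,1},\dots,q_{i,k})$ is a scalar multiple of a vector in $\{1,e^\epsilon\}^k$ with exactly $d^\ast$ coordinates equal to $e^\epsilon$; thus $\mathbi{Q}$ is, after reduction, supported on the $\binom{k}{d^\ast}$ ``$d^\ast$-type'' columns, each appearing with some nonnegative weight. Equality in the Cauchy--Schwarz step of Proposition~\ref{Prop:bdtr} imposes the corresponding equality condition of Appendix~\ref{ap:bdtr}, which pins $\Phi$ to the exchangeable form $\Phi=aI+bJ$ and so imposes linear constraints on those weights. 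Adjoining the converse half of Proposition~\ref{Prop:Zac} (specifically the claim \eqref{eq:eqcon}) and the simplex-wide conditions $(iii)$, namely $\tr\big(I(\mathbi{p})^{-1}\big) + \mathbi{1}^T I(\mathbi{p})^{-1}\mathbi{1} \le M(k,\epsilon)/n$ for \emph{every} $\mathbi{p}\in\Delta_k$ and not only for $\mathbi{p}_U$, one obtains an over-determined system constraining the multiset of columns of $\mathbi{Q}$.

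The last and hardest step is to show that this system admits only the ``uniform'' solution, i.e.\ that every $d^\ast$-type column occurs with the same weight, which is precisely $\mathbi{Q}_{k,\epsilon,d^\ast}$ after identifying proportional columns. The obstacle is that all the conditions extracted so far --- exchangeability of $\Phi$, equality in \eqref{eq:itp} --- are evaluated at the single point $\mathbi{p}_U$, and they are genuinely satisfied by some non-symmetric schemes: one may delete a permutation-closed sub-family of the $d^\ast$-type columns, or reweight them in a $\mathbi{p}_U$-neutral way, and still obtain $\Phi=aI+bJ$ with equality in \eqref{eq:itp}. The expectation is that the simplex-wide conditions $(iii)$ eliminate all such schemes, because an imbalanced yet $\mathbi{p}_U$-optimal scheme ought to have $\tr\big(I(\mathbi{p}_0)^{-1}\big) + \mathbi{1}^T I(\mathbi{p}_0)^{-1}\mathbi{1} > M(k,\epsilon)/n$ at some $\mathbi{p}_0\neq\mathbi{p}_U$; turning this expectation into a proof requires writing the Fisher information $I(\mathbi{p})$ of the privatized model explicitly as a function of the column multiset and showing that every non-uniform multiset is strictly suboptimal somewhere on $\Delta_k$, and this is exactly the point for which no short argument is currently available --- hence the statement is posed as a conjecture. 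A sensible intermediate target is the self-contained claim that $\mathbi{Q}_{k,\epsilon,d^\ast}$ is the unique $\epsilon$-private finite-output scheme minimizing $\sup_{\mathbi{p}\in\Delta_k}\big(\tr(I(\mathbi{p})^{-1}) + \mathbi{1}^T I(\mathbi{p})^{-1}\mathbi{1}\big)$, which could then be transferred to $r_{k,n}^{\ell_2^2}$ by a version of the LAN lower bound that is uniform over $\mathbi{p}$.
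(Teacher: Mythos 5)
This statement is posed in the paper as an open conjecture; the authors supply no proof, only the list of necessary conditions $(i)$--$(iii)$ in Section~\ref{Sect:FW} and a statement of belief. Your proposal is therefore not being measured against a proof in the paper, and, to your credit, you are explicit that it is a roadmap rather than a proof. The parts you do carry out are sound and track the authors' own discussion: the ``if'' direction via \eqref{eq:rd}--\eqref{eq:rdstar} and Theorem~\ref{Thm:Main}; the disposal of $\delta(\mathbi{Q})=0$ by a two-point indistinguishability argument; the observation that for a \emph{fixed} $\mathbi{Q}$ the Case~1 machinery with $\mathbi{Q}$-dependent constants yields \eqref{eq:unb}, so that the hypothesis forces equality in both \eqref{eq:itp} and \eqref{eq:rhp2}; and the extraction of the structural consequences (all surviving columns of $d^\ast$-type, $\Phi$ exchangeable).

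The genuine gap is the one you name yourself, and it is the whole content of the conjecture: the equality conditions are all evaluated at $\mathbi{p}_U$, and they are satisfied by non-uniform multisets of $d^\ast$-type columns (e.g.\ permutation-closed sub-families, or $\mathbi{p}_U$-neutral reweightings). Eliminating these requires showing that every such scheme violates $\tr(I(\mathbi{p})^{-1})+\mathbi{1}^T I(\mathbi{p})^{-1}\mathbi{1}\le M(k,\epsilon)/n$ at some $\mathbi{p}\neq\mathbi{p}_U$, together with a LAN-type lower bound that is uniform over a neighborhood of that $\mathbi{p}$; neither step is supplied. A second, smaller gap: this conjecture ranges over all finite-output $\epsilon$-private $\mathbi{Q}$, not only $\cD_{\epsilon,E}$, whereas the symbol-merging reduction of Section~\ref{Sect:merge} and the bound \eqref{eq:L} are stated only for $\mathbi{Q}\in\cD_{\epsilon,E}$; your phrase ``take $\mathbi{Q}$ already in reduced form'' silently assumes membership in $\cD_{\epsilon,E}$, so one would additionally have to show that no non-extremal scheme attains the limit $M(k,\epsilon)$. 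As it stands the proposal is an accurate expansion of the authors' heuristic discussion, not a resolution of the conjecture.
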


\subsection{Asymptotically tight lower bound for the $\ell_1$ loss}
Another open question is to find an asymptotically optimal privatization mechanism/estimation procedure
for the $\ell_1$ loss. We similarly believe that our privatization scheme $\mathbi{Q}_{k,\epsilon,d^\ast}$ and the empirical estimator $\hat{\mathbi{p}}$ given by \eqref{eq:emp} are asymptotically optimal in this case as well. 
More precisely, in \cite{Ye17}, we have shown that
$$
r_{k,n}^{\ell_1} (\mathbi{Q}_{k,\epsilon,d}, \hat{\mathbi{p}})
=   \frac{k-1}{e^{\epsilon}-1}  \sqrt{\frac{2}{\pi n}}
\sqrt{\frac{(d e^{\epsilon} + k-d)^2 }{d(k-d)} } + o\Big(\frac 1{\sqrt n}\Big).
$$
It is clear that
$$
r_{k,n}^{\ell_1} (\mathbi{Q}_{k,\epsilon,d^\ast}, \hat{\mathbi{p}}) = 
\min_{1\le d \le k-1} r_{k,n}^{\ell_1} (\mathbi{Q}_{k,\epsilon,d}, \hat{\mathbi{p}}).
$$
Our conjecture is
\begin{equation}\label{eq:cj2}
\lim_{n\to \infty} \sqrt{n} r_{\epsilon,k,n}^{\ell_1}
= \frac{k-1}{e^{\epsilon}-1}  \sqrt{\frac{2}{\pi}}
\sqrt{\frac{(d^\ast e^{\epsilon} + k-d^\ast)^2 }{d^\ast(k-d^\ast)} }.
\end{equation}

Within the frame of our approach, the obstacle in the way of proving this conjecture can be described as follows. Given a positive definite matrix $M$, let $S(M):=\sqrt{\sum_i M_{ii}}.$ 
Similarly to \eqref{eq:unb}, one can show that
$$
\liminf_{n\to \infty} \sqrt{n} r_{k,n}^{\ell_1} (\mathbi{Q}) \ge
\sqrt{n} ( S(\Phi^{-1}) + (\mathbi{1}^T \Phi^{-1} \mathbi{1})^{1/2} ).
$$
To prove \eqref{eq:cj2}, we need a lower bound on the quantity
$ S(\Phi^{-1}) + (\mathbi{1}^T \Phi^{-1} \mathbi{1})^{1/2} $, which is similar to
\eqref{eq:itp}. So far we have not been able to find a useful bound of this kind.

\section*{acknowledgement}

We are grateful to our colleague Itzhak Tamo for his help with the proof of Prop.~\ref{Prop:Zac}.

\clearpage

\begin{center}{\Large\bf Appendices}\end{center}

\appendices
\addtocontents{toc}{\protect\setcounter{tocdepth}{0}}

\section{A calculus inequality}\label{ap:log}
\begin{proposition}\label{Prop:log}
For all $x\ge -\frac{2}{3},$ we have the following inequality:
$$
\Big| \log(1+x) - (x - \frac{x^2}{2}) \Big| \le \Big| x^3 \Big|
$$
\end{proposition}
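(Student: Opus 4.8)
The plan is to prove the pointwise inequality $|\log(1+x) - (x - x^2/2)| \le |x|^3$ for $x \ge -2/3$ by a standard calculus argument, splitting into the cases $x \ge 0$ and $-2/3 \le x < 0$, since the behaviour of the error term differs in sign on the two sides.

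First I would recall the Taylor expansion with remainder. For $x > -1$ we have the alternating-type series $\log(1+x) = x - \tfrac{x^2}{2} + \tfrac{x^3}{3} - \tfrac{x^4}{4} + \cdots$, so that $\log(1+x) - (x - x^2/2) = \sum_{m\ge 3} \frac{(-1)^{m-1}x^m}{m}$. For $0 \le x \le 1$ this is an alternating series with terms decreasing in absolute value, hence its value lies between consecutive partial sums; in particular $0 \le \log(1+x) - (x-x^2/2) \le x^3/3 \le x^3$, which settles the case $x \in [0,1]$. For $x > 1$ one cannot use the series, so instead I would define $\phi(x) := x^3 - \log(1+x) + x - x^2/2$ and check directly that $\phi(1) > 0$ and $\phi'(x) = 3x^2 - \frac{1}{1+x} + 1 - x > 0$ for $x \ge 1$ (indeed $3x^2 - x + 1 \ge 3$ while $\frac1{1+x} \le \frac12$), so $\phi$ is increasing and stays positive; combined with the trivial fact that $\log(1+x)-(x-x^2/2) \ge 0$ for $x\ge 0$ (which follows because $\psi(x):=\log(1+x)-x+x^2/2$ has $\psi(0)=0$ and $\psi'(x)=\frac{x^2}{1+x}\ge 0$), this disposes of all $x \ge 0$.

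Next, for $-2/3 \le x < 0$ write $x = -t$ with $0 < t \le 2/3$. Then $\log(1+x)-(x-x^2/2) = \log(1-t) + t + t^2/2 = -\sum_{m\ge 3}\frac{t^m}{m}$, which is negative, and its absolute value is $\sum_{m\ge 3}\frac{t^m}{m} \le \frac{1}{3}\sum_{m\ge 3} t^m = \frac{t^3}{3(1-t)} \le \frac{t^3}{3(1-2/3)} = t^3 = |x|^3$. This is exactly the claimed bound on the negative side, and it is the place where the constraint $x \ge -2/3$ is used: the geometric tail $\frac{1}{1-t}$ must be controlled, and $t \le 2/3$ makes $\frac{1}{3(1-t)} \le 1$.

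The argument has no real obstacle; the only mildly delicate point is handling $x > 1$, where the Maclaurin series diverges, so one must switch to the monotonicity argument for $\phi$ rather than relying on the alternating-series estimate. Once the four regimes ($0\le x\le 1$, $x>1$, $-2/3\le x<0$, and the sign bookkeeping) are assembled, combining them yields $|\log(1+x)-(x-x^2/2)| \le |x|^3$ for all $x \ge -2/3$, as required.
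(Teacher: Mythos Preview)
Your argument is correct. The paper's proof takes a slightly different, more uniform route: it sets $h_1(x)=\log(1+x)-(x-x^2/2)$, observes $h_1'(x)=x^2/(1+x)\ge 0$ so that $h_1\ge 0$ on $[0,\infty)$ and $h_1\le 0$ on $(-1,0]$, and then differentiates $h_2(x):=|h_1(x)|-|x^3|$ on each side, obtaining $h_2'(x)=x^2/(1+x)-3x^2<0$ for $x>0$ and $h_2'(x)=3x^2-x^2/(1+x)\ge 0$ for $-\tfrac{2}{3}\le x<0$; since $h_2(0)=0$ this gives $h_2\le 0$ throughout. Compared with your approach, the paper needs only two cases and no series manipulation, so it avoids the separate treatment of $x>1$ that you had to introduce because the Maclaurin series diverges there. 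Conversely, your series argument on $[-2/3,0)$ makes transparent exactly where the cutoff $-2/3$ enters, via the factor $\tfrac{1}{3(1-t)}\le 1$ for $t\le 2/3$.
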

\begin{proof}
Let 
$$
h_1(x)=\log(1+x) - (x - \frac{x^2}{2}), \text{~and~} 
h_2(x) = \Big| \log(1+x) - (x - \frac{x^2}{2}) \Big| - \Big| x^3 \Big|.
$$
Then
$$
h'_1(x)=\frac{x^2}{x+1} \ge 0 \text{~for all~} x>-1.
$$
Since $h_1(0)=0,$ we have $h_1(x)\ge 0$ for all $x>0,$ and $h_1(x) \le 0$ for all $-1<x<0.$
Consequently,
$$
h_2(x)=h_1(x)-x^3 \text{~for~} x > 0, \text{~and~} h_2(x)=x^3-h_1(x) \text{~for~} -1 < x<0.
$$
As a result,
$$
h'_2(x)=\frac{x^2}{x+1}- 3x^2 < 0 \text{~for~} x > 0, \text{~and~} h'_2(x)=3x^2 - \frac{x^2}{x+1} \ge 0 \text{~for~} -\frac{2}{3} \le x<0.
$$
Since $h_2(0)=0,$ we conclude that $h_2(x) \le 0$ for all $x\ge -\frac{2}{3}.$ 
\end{proof}

\section{Proof of Proposition \ref{Prop:E2}}\label{ap:E2}
We need the following simple proposition about the volume of ellipsoids.
\begin{proposition}\label{Prop:ellips}
Let $\Lambda$ be an $s\times s$ positive definite matrix. Let 
$
\{\mathbi{u} \in \mathbb{R}^s: \mathbi{u}^T \Lambda \mathbi{u} \le \alpha_1^2\}
$
and
$
\{\mathbi{u} \in \mathbb{R}^s: \mathbi{u}^T \Lambda \mathbi{u} \le \alpha_2^2\}
$
be two ellipsoids. Then the ratio of their volumes equals $(\alpha_1 / \alpha_2)^s$.
\end{proposition}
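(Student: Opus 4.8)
The plan is to exploit the scaling symmetry of the quadratic form defining the ellipsoids. Write $E(\alpha):=\{\mathbi{u}\in\mathbb{R}^s:\mathbi{u}^T\Lambda\mathbi{u}\le\alpha^2\}$ for $\alpha>0$, so that the two sets in the statement are $E(\alpha_1)$ and $E(\alpha_2)$. First I would note that, since $\Lambda$ is positive definite, each $E(\alpha)$ is a bounded (hence finite-volume) convex body, so the volumes in question are well defined and positive. Then the key observation is that the linear map $\phi:\mathbb{R}^s\to\mathbb{R}^s$, $\phi(\mathbi{u})=(\alpha_1/\alpha_2)\,\mathbi{u}$, is a bijection of $E(\alpha_2)$ onto $E(\alpha_1)$: indeed $\phi$ is invertible (with inverse $\mathbi{u}\mapsto(\alpha_2/\alpha_1)\mathbi{u}$), and $(\phi(\mathbi{u}))^T\Lambda\,\phi(\mathbi{u})=(\alpha_1/\alpha_2)^2\,\mathbi{u}^T\Lambda\mathbi{u}$, which is $\le\alpha_1^2$ precisely when $\mathbi{u}^T\Lambda\mathbi{u}\le\alpha_2^2$.

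Next I would invoke the change-of-variables formula for Lebesgue measure on $\mathbb{R}^s$. The map $\phi$ has matrix $(\alpha_1/\alpha_2)I_s$, whose determinant is $(\alpha_1/\alpha_2)^s$, and therefore
$$
\mathrm{Vol}(E(\alpha_1))=\mathrm{Vol}(\phi(E(\alpha_2)))=(\alpha_1/\alpha_2)^s\,\mathrm{Vol}(E(\alpha_2)),
$$
which is exactly the asserted ratio. Equivalently, and as a sanity check, one may diagonalize $\Lambda=P^T\operatorname{diag}(\lambda_1,\dots,\lambda_s)P$ with $P$ orthogonal and all $\lambda_i>0$, substitute $\mathbi{u}=P^T\mathbi{x}$ to pass to an axis-aligned ellipsoid, and obtain the closed form $\mathrm{Vol}(E(\alpha))=\alpha^s V_s/\sqrt{\det\Lambda}$, where $V_s$ is the volume of the Euclidean unit ball in $\mathbb{R}^s$; the ratio $(\alpha_1/\alpha_2)^s$ is then immediate.

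There is essentially no obstacle here: the statement is a one-line consequence of the homogeneity of Lebesgue measure under scalar multiplication in $\mathbb{R}^s$. The only points requiring (minimal) care are that the scaling exponent is the ambient dimension $s$ rather than $1$, and that positive definiteness of $\Lambda$ is what guarantees the ellipsoids are bounded, so that finite volumes can be meaningfully compared.
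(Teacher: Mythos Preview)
Your proof is correct. Your primary argument, the direct scaling map $\phi(\mathbi{u})=(\alpha_1/\alpha_2)\mathbi{u}$ together with the change-of-variables formula, is slightly more streamlined than the paper's approach: the paper diagonalizes $\Lambda=P^TDP$, changes variables to the unit ball, and derives the explicit volume formula $\mathrm{Vol}(B_\Lambda(\alpha))=\alpha^s\pi^{s/2}\Gamma(s/2+1)^{-1}\prod_i\lambda_i^{-1/2}$, from which the ratio follows. Your scaling argument bypasses the need for this closed form entirely, while your ``sanity check'' via diagonalization is exactly the paper's route; either way the content is the same elementary change-of-variables computation.
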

\begin{proof} Consider the ellipsoid $
B_{\Lambda}(\alpha):=\{\mathbi{u} \in \mathbb{R}^s: \mathbi{u}^T \Lambda \mathbi{u} \le \alpha^2\}.
$
Let $\lambda_1,\dots,\lambda_s$ be the eigenvalues of $\Lambda$, taken with multiplicities.
Since $\Lambda$ is positive definite, we can diagonalize it as 
$\Lambda=P^T DP$, where $P$ is an orthogonal matrix and $D=\text{diag}(\lambda_1,\dots,\lambda_s)$.
Then
\begin{align}
\text{Vol}(B_{\Lambda}(\alpha))&=\int_{\mathbi{u}^T \Lambda \mathbi{u} \le \alpha^2} d\mathbi{u} 
= \int_{\mathbi{v}^T \mathbi{v} \le 1} |\alpha P^T D^{-1/2}| d\mathbi{v} 
= \alpha^s |D^{-1/2}| \frac{\pi^{s/2}}{\Gamma(\frac{s}{2}+1)} \nonumber\\
&= \alpha^s \frac{\pi^{s/2}}{\Gamma(\frac{s}{2}+1)} \prod_{i=1}^s \lambda_i^{-1/2},
\label{eq:vols}
\end{align}
where the second equality follows by a change of variable $\mathbi{v}=\frac{1}{\alpha}D^{1/2}P\mathbi{u}$, and the third one 
uses the expression for the volume of the unit sphere in $\mathbb{R}^s.$
The proposition follows immediately from \eqref{eq:vols}.
\end{proof}
Now we are ready to prove Prop.~\ref{Prop:E2}.

{\em Proof of Prop.~\ref{Prop:E2}.}
As mentioned before, conditional on $\mathbi{U}=\mathbi{u},$ the random variable $t_i(Y^n)$ has binomial distribution $B(n, q_i + \sum_{j=1}^k u_j q_{ij})$.
With this in mind, using \eqref{eq:qu} several times, we have for all $\mathbi{u}\in B_1$ the following upper bound:
\begin{align}
 \mathbb{E} &\Big[ \sum_{i=1}^L \frac{n}{q_i} \Big(\sum_{j=1}^k u_j q_{ij} - \frac{V_i}{n} \Big)^2
\Big| \mathbi{U}=\mathbi{u} \Big] \nonumber\\
& = \mathbb{E} \Big[ \sum_{i=1}^L \frac{n}{q_i} \Big(\frac{t_i(Y^n)}{n} - \Big(q_i + \sum_{j=1}^k u_j q_{ij} \Big) \Big)^2
\Big| \mathbi{U}=\mathbi{u} \Big] \nonumber\\
&= \sum_{i=1}^L \frac{1}{q_i} \Big(q_i + \sum_{j=1}^k u_j q_{ij} \Big) \Big( 1 - q_i - \sum_{j=1}^k u_j q_{ij} \Big) \nonumber\\
& \le  \sum_{i=1}^L \frac{1}{q_i} q_i \Big( 1 - q_i - \sum_{j=1}^k u_j q_{ij} \Big)
+ \sum_{i=1}^L \frac{1}{q_i} \Big|\sum_{j=1}^k u_j q_{ij} \Big| \Big( 1 - q_i - \sum_{j=1}^k u_j q_{ij} \Big) \nonumber\\ 
& =L-1 + \sum_{i=1}^L  \Big|\sum_{j=1}^k \frac{u_j q_{ij}}{q_i} \Big| \Big( 1 - q_i - \sum_{j=1}^k u_j q_{ij} \Big) \nonumber\\
&\overset{(a)}{\le}  L-1 + \frac{k}{ n^{5/13}}\sum_{i=1}^L  \Big( 1 - q_i - \sum_{j=1}^k u_j q_{ij} \Big) \nonumber\\
& = (L-1) \Big( 1 + \frac{k}{n^{5/13}} \Big) < 2L \le 2^{k+1}, \label{eq:expL}
\end{align}
where $(a)$ follows from \eqref{eq:bdr}.

For all $\mathbi{u} \in B_2,$ we have
\begin{align}
P(Y^n\in E_2 | \mathbi{U}=\mathbi{u})
& \ge P \Big( \sum_{i=1}^L \frac{n}{q_i} \Big(\sum_{j=1}^k u_j q_{ij} - \frac{V_i}{n} \Big)^2
< n^{1/13} \Big| \mathbi{U}=\mathbi{u} \Big) \nonumber\\
& = 1 - P \Big( \sum_{i=1}^L \frac{n}{q_i} \Big(\sum_{j=1}^k u_j q_{ij} - \frac{V_i}{n} \Big)^2
\ge n^{1/13} \Big| \mathbi{U}=\mathbi{u} \Big) \nonumber\\
& \ge 1 - \frac{2^{k+1}}{n^{1/13}}, \label{eq:13}
\end{align}
where the last step follows by the Markov inequality and \eqref{eq:expL}.
By our assumption, $P_U(B_1)=1,$ so from \eqref{eq:B1},\eqref{eq:B2}, and Prop.~\ref{Prop:ellips} we obtain
  \begin{equation}\label{eq:PB2}
P(\mathbi{U}\in B_2) = \frac{\text{Vol}(B_2)}{\text{Vol}(B_1)}=\Big( 1 - \frac{3/\delta_0}{n^{1/13}} \Big)^{k-1} \ge 1 - \frac{3(k-1)/\delta_0}{n^{1/13}},
  \end{equation}
where the inequality follows from the fact that $(1-x)^k \ge 1-kx$ for all $0\le x < 1.$
Therefore the inequality holds for all $n\ge N(k,\epsilon)$ as long as we set 
$N(k,\epsilon)>(3/\delta_0)^{13}$.
Now using \eqref{eq:13} and \eqref{eq:PB2} we obtain
\begin{align*}
P(Y^n\in E_2) & = \int_{ B_1} 
P(Y^n\in E_2 | \mathbi{U}=\mathbi{u}) f_{\mathbi{U}}(\mathbi{u}) d \mathbi{u}
\ge \int_{ B_2} 
P(Y^n\in E_2 | \mathbi{U}=\mathbi{u}) f_{\mathbi{U}}(\mathbi{u}) d \mathbi{u} \\
& \ge \Big( 1 - \frac{2^{k+1}}{n^{1/13}} \Big) \int_{ B_2} 
 f_{\mathbi{U}}(\mathbi{u}) d \mathbi{u} = \Big( 1 - \frac{2^{k+1}}{n^{1/13}} \Big) P(\mathbi{U}\in B_2) \\
& \ge 1 - \frac{2^{k+1} + 3k/\delta_0}{n^{1/13}}.
\end{align*}
This completes the proof of Prop.~\ref{Prop:E2}.

\section{Proof of Proposition~\ref{Prop:inc}}\label{ap:inc}
For every $y^n \in E_2,$ there exists $\tilde{\mathbi{u}} \in B_2$ such that
\begin{equation}\label{eq:tiluag}
 \sqrt{h_{\mathbi{v}}(\tilde{\mathbi{u}})}= \Big(\sum_{i=1}^L \frac{n}{q_i} \Big(\sum_{j=1}^k \tilde{u}_j q_{ij} - \frac{v_i}{n} \Big)^2 \Big)^{1/2} < n^{1/26}.
\end{equation}
By definition of $B_2,$ we have $\Big(\sum_{i=1}^k \tilde{u}_i^2 \Big)^{1/2} < \frac{1}{n^{5/13}} - \frac{3/\delta_0}{n^{6/13}}.$ According to triangle inequality,
$$
\Big(\sum_{i=1}^k (u_i - \tilde{u}_i)^2 \Big)^{1/2} \ge 
\Big(\sum_{i=1}^k u_i^2 \Big)^{1/2} - \Big(\sum_{i=1}^k \tilde{u}_i^2 \Big)^{1/2}
> \alpha - \frac{1}{n^{5/13}}+ \frac{3/\delta_0}{n^{6/13}} \text{~for all~}
\mathbi{u} \notin B(\alpha).
$$
By \eqref{eq:defdelta},
$$
\Big(\sum_{i=1}^L \frac{1}{q_i} \Big(\sum_{j=1}^k (u_j - \tilde{u}_j) q_{ij} \Big)^2\Big)^{1/2}
\ge \delta_0 \Big(\sum_{i=1}^k (u_i - \tilde{u}_i)^2 \Big)^{1/2}
> \delta_0 (\alpha - \frac{1}{n^{5/13}} ) + \frac{3}{n^{6/13}} \text{~for all~}
\mathbi{u} \notin B(\alpha).
$$
As a result,
$$
\Big(\sum_{i=1}^L \frac{n}{q_i} \Big(\sum_{j=1}^k (u_j - \tilde{u}_j) q_{ij} \Big)^2\Big)^{1/2}
> \delta_0 n^{1/2} (\alpha - n^{-5/13} ) + 3 n^{1/26} \text{~for all~}
\mathbi{u} \notin B(\alpha).
$$
Again by triangle inequality,
\begin{equation}\label{eq:utriag}
\begin{aligned}
 \sqrt{h_{\mathbi{v}}(\mathbi{u})} = & \Big(\sum_{i=1}^L \frac{n}{q_i} \Big(\sum_{j=1}^k u_j q_{ij} - \frac{v_i}{n} \Big)^2 \Big)^{1/2} \\
\ge & \Big(\sum_{i=1}^L \frac{n}{q_i} \Big(\sum_{j=1}^k (u_j - \tilde{u}_j) q_{ij} \Big)^2\Big)^{1/2} -
\Big(\sum_{i=1}^L \frac{n}{q_i} \Big(\sum_{j=1}^k \tilde{u}_j q_{ij} - \frac{v_i}{n} \Big)^2 \Big)^{1/2} \\
> & \delta_0 n^{1/2} (\alpha - n^{-5/13} ) + 2 n^{1/26} \text{~for all~}
\mathbi{u} \notin B(\alpha).
\end{aligned}
\end{equation}
Combining \eqref{eq:tiluag} and \eqref{eq:utriag}, we have
$$
 \sqrt{h_{\mathbi{v}}(\mathbi{u})} - \sqrt{h_{\mathbi{v}}(\tilde{\mathbi{u}})} >
\delta_0 n^{1/2} (\alpha - n^{-5/13} ) + n^{1/26}  \text{~for all~}
\mathbi{u} \notin B(\alpha).
$$
Thus we conclude that for every $y^n \in E_2,$ there exists $\tilde{\mathbi{u}} \in B_2$ such that
$$
(B(\alpha))^c \subseteq E_{\mathbi{v}}(\tilde{\mathbi{u}}, \delta_0 n^{1/2} (\alpha - n^{-5/13} ) + n^{1/26}).
$$
This completes the proof of Prop.~\ref{Prop:inc}.

\section{Proof of Proposition \ref{Prop:U}}\label{ap:concen}
We will rely on a concentration result for random Gaussian vectors.
\begin{proposition}\label{Prop:Z}
Let $\mathbi{Z}\sim \cN(\mathbi{0}, I_s)$ be a standard Gaussian random vector. We have
\begin{equation}\label{eq:Z2}
P(\|\mathbi{Z}\|_2 \ge \alpha) \le e^{-\frac{(\alpha-\sqrt{s})^2}{2}} \text{~for all~} \alpha \ge \sqrt{s}.
\end{equation}
\end{proposition}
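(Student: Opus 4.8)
The plan is to derive this standard sub-gaussian tail bound by the Chernoff method, working with $\|\mathbi{Z}\|_2^2$ rather than $\|\mathbi{Z}\|_2$ itself, since the square of the norm has a closed-form moment generating function. Since $\alpha\ge\sqrt s>0$, we have $P(\|\mathbi{Z}\|_2\ge\alpha)=P(W\ge\alpha^2)$, where $W:=\|\mathbi{Z}\|_2^2=\sum_{i=1}^s Z_i^2$ is a chi-square variable with $s$ degrees of freedom. Using $\mathbb{E}[e^{\lambda Z_i^2}]=(1-2\lambda)^{-1/2}$ for $0\le\lambda<1/2$ together with independence of the coordinates, $\mathbb{E}[e^{\lambda W}]=(1-2\lambda)^{-s/2}$, so Markov's inequality gives $P(W\ge\alpha^2)\le e^{-\lambda\alpha^2}(1-2\lambda)^{-s/2}$ for every $\lambda\in[0,1/2)$.

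I would then optimize the exponent $g(\lambda)=-\lambda\alpha^2-\tfrac{s}{2}\log(1-2\lambda)$ over $\lambda\in[0,1/2)$. Solving $g'(\lambda)=0$ gives $1-2\lambda=s/\alpha^2$, i.e. $\lambda^\ast=\tfrac12(1-s/\alpha^2)$, which lies in $[0,1/2)$ precisely because $\alpha^2\ge s$. Substituting $\lambda^\ast$ back yields
\[
P(\|\mathbi{Z}\|_2\ge\alpha)\le\exp\Big(-\tfrac12(\alpha^2-s)+\tfrac{s}{2}\log\tfrac{\alpha^2}{s}\Big).
\]
It remains to verify the elementary inequality
\[
-\tfrac12(\alpha^2-s)+\tfrac{s}{2}\log\tfrac{\alpha^2}{s}\ \le\ -\tfrac12(\alpha-\sqrt s)^2\qquad\text{for }\alpha\ge\sqrt s.
\]
Expanding $(\alpha-\sqrt s)^2$ and cancelling the $\alpha^2$ terms, this is equivalent to $\sqrt s\,\log(\alpha/\sqrt s)\le\alpha-\sqrt s$, and dividing by $\sqrt s$ and setting $x=\alpha/\sqrt s\ge1$ turns it into $\log x\le x-1$, which holds for all $x>0$. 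Chaining the last two displays proves the proposition.

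I do not expect a genuine obstacle here; the only points worth isolating are that one should pass to $\|\mathbi{Z}\|_2^2$ so that the moment generating function is available in closed form, and that the optimized chi-square Chernoff bound collapses to the stated Gaussian form via $\log x\le x-1$. An alternative (but less elementary) route would invoke Gaussian concentration of measure for the $1$-Lipschitz map $\mathbi{z}\mapsto\|\mathbi{z}\|_2$ together with $\mathbb{E}\|\mathbi{Z}\|_2\le(\mathbb{E}\|\mathbi{Z}\|_2^2)^{1/2}=\sqrt s$; since the paper emphasizes that all of its arguments are elementary, I would keep the Chernoff computation above.
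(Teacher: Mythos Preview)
Your Chernoff argument is correct. Actually, the paper takes precisely the ``alternative (but less elementary) route'' you mention at the end: it quotes the Sudakov--Tsirel'son--Borell Gaussian concentration inequality for the $1$-Lipschitz function $\mathbi{z}\mapsto\|\mathbi{z}\|_2$ and then bounds $\mathbb{E}\|\mathbi{Z}\|_2\le\sqrt{s}$ via Jensen. So your guess about which route the authors would prefer is off, but your proof stands on its own.

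The trade-offs are clear. Your approach is fully self-contained---just the chi-square MGF, an explicit optimization, and the inequality $\log x\le x-1$---so it matches the paper's stated goal of being elementary better than the paper's own proof does. The paper's approach, by contrast, invokes a black-box concentration theorem, which is shorter to write down but imports a nontrivial external result. Either way the conclusion is the same inequality, and nothing downstream in the paper depends on how it was proved.
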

This is a very special case of the following general concentration inequality due to Sudakov and Tsirel'son \cite{Sudakov78} and Borell \cite{Borell75} (see also Pisier and Maurey \cite[page 176]{Pisier86}).
Recall that a function $f:\mathbb{R}^s \to \mathbb{R}$ is called $\rho$-Lipschitz if
$$
|f(\mathbi{z})-f(\mathbi{z}')| \le \rho \| \mathbi{z} - \mathbi{z}' \|_2 
\text{~for all~} \mathbi{z},\mathbi{z}'\in\mathbb{R}^s.
$$
\begin{theorem}[\cite{Sudakov78,Borell75}]
For a $\rho$-Lipschitz function $f:\mathbb{R}^s \to \mathbb{R}$
and a standard Gaussian random vector $\mathbi{Z} \sim \cN(\mathbi{0}, I_s),$ the random variable $f(\mathbi{Z})$ satisfies the following concentration inequality:
\begin{equation}\label{eq:conc}
P(f(\mathbi{Z})-\mathbb{E} f(\mathbi{Z}) \ge \alpha) \le e^{-\frac{\alpha^2}{2\rho^2}} \text{~for all~} \alpha \ge 0.
\end{equation}
\end{theorem}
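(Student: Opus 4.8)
The plan is to reduce \eqref{eq:conc} to the sub-Gaussian moment generating function bound
\[
\mathbb{E}\big[\, e^{\lambda ( f(\mathbi{Z}) - \mathbb{E}f(\mathbi{Z}) )} \,\big] \le e^{\lambda^2 \rho^2/2}, \qquad \lambda \ge 0 ,
\]
because once this holds, Markov's inequality applied to $e^{\lambda(f(\mathbi{Z})-\mathbb{E}f(\mathbi{Z}))}$ gives $P\big(f(\mathbi{Z})-\mathbb{E}f(\mathbi{Z})\ge\alpha\big)\le e^{-\lambda\alpha+\lambda^2\rho^2/2}$, and the choice $\lambda=\alpha/\rho^2$ (legitimate since $\alpha\ge 0$) yields exactly \eqref{eq:conc}. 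Before proving the displayed bound I would make two harmless reductions: by mollifying $f$ with a smooth approximate identity (which does not increase its Lipschitz constant) and truncating at a large level (which keeps it $\rho$-Lipschitz), I may assume $f$ is smooth and bounded, the general case following by Fatou's lemma and dominated convergence; and, subtracting a constant, I may assume $\int f\,d\gamma_s=0$, where $\gamma_s$ denotes the standard Gaussian measure on $\mathbb{R}^s$. (The degenerate case $\rho=0$ is trivial.)

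The engine of the argument is the Ornstein--Uhlenbeck semigroup
\[
P_t h(\mathbi{x}) = \int_{\mathbb{R}^s} h\big( e^{-t}\mathbi{x} + \sqrt{1-e^{-2t}}\,\mathbi{y} \big)\, d\gamma_s(\mathbi{y}), \qquad t \ge 0 .
\]
Differentiating this (Mehler) formula under the integral sign yields the three elementary facts I would use: (i) $\gamma_s$ is $P_t$-invariant, so $\int P_t h\, d\gamma_s = \int h\, d\gamma_s$, and $P_t h(\mathbi{x}) \to \int h\, d\gamma_s$ as $t\to\infty$ for every $\mathbi{x}$; (ii) $\partial_t P_t h = \mathcal{L} P_t h$ with generator $\mathcal{L}h = \Delta h - \langle \mathbi{x}, \nabla h\rangle$, together with the integration-by-parts identity $\int \varphi\, \mathcal{L}\psi\, d\gamma_s = -\int \langle \nabla\varphi, \nabla\psi\rangle\, d\gamma_s$; and (iii) the gradient commutation $\nabla P_t h = e^{-t} P_t(\nabla h)$, whence $\| \nabla P_t h \|_\infty \le e^{-t}\| \nabla h \|_\infty$.

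With these in hand I would fix $\lambda\ge 0$, set $\phi(t) = \log\int_{\mathbb{R}^s} e^{P_t(\lambda f)}\, d\gamma_s$ (finite, since $P_t(\lambda f)$ is bounded), and observe that $\phi(\infty)=0$ by (i) together with $\int\lambda f\,d\gamma_s=0$ (dominated convergence). Differentiating and using (ii), then (iii) and $\| \nabla f \|_\infty\le\rho$,
\begin{align*}
\phi'(t) &= \frac{\int e^{P_t(\lambda f)}\,\mathcal{L}P_t(\lambda f)\, d\gamma_s}{\int e^{P_t(\lambda f)}\, d\gamma_s}
= -\,\frac{\int e^{P_t(\lambda f)}\,\| \nabla P_t(\lambda f) \|_2^2\, d\gamma_s}{\int e^{P_t(\lambda f)}\, d\gamma_s}\\
&\ge -e^{-2t}\lambda^2\rho^2 .
\end{align*}
Integrating over $t\in[0,\infty)$ then gives
\[
\log\mathbb{E}\big[\, e^{\lambda (f(\mathbi{Z})-\mathbb{E}f(\mathbi{Z}))} \,\big] = \phi(0) = -\int_0^\infty \phi'(t)\, dt \;\le\; \lambda^2\rho^2\!\int_0^\infty e^{-2t}\, dt = \frac{\lambda^2\rho^2}{2},
\]
which is the desired moment generating function bound; combined with the first paragraph this proves the theorem.

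The routine parts are the analytic bookkeeping — justifying differentiation under the integral sign, the $t\to\infty$ limit, and the mollification/truncation passage — and I expect no real difficulty there. The only genuine subtlety, and the reason one cannot shortcut through a one-line coupling, is the sharp constant: interpolating $\mathbi{Z}\sin\theta+\mathbi{Z}'\cos\theta$ with an independent copy $\mathbi{Z}'$ delivers only the weaker exponent $2/(\pi^2\rho^2)$, whereas the optimal constant $1/(2\rho^2)$ in \eqref{eq:conc} — equivalently, the sharp Gaussian isoperimetric inequality of Sudakov--Tsirel'son and Borell — is exactly what the $e^{-2t}$ rate of decay of $\| \nabla P_t h \|_\infty$ encodes, which is why the semigroup flow is indispensable here.
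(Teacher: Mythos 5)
Your proof is correct. Note that the paper does not prove this theorem at all: it is imported verbatim from Sudakov--Tsirel'son and Borell, whose original arguments go through the Gaussian isoperimetric inequality (extremality of half-spaces) together with the observation that a $\rho$-Lipschitz function deviates from its median by at most $\rho$ times the distance to the complement of a half-space of measure $1/2$. What you give instead is the standard semigroup (Bakry--Ledoux/Herbst-type) derivation of the sub-Gaussian Laplace-transform bound, and every step checks out: the integration by parts $\int \varphi\,\mathcal{L}\psi\,d\gamma_s=-\int\langle\nabla\varphi,\nabla\psi\rangle\,d\gamma_s$ applied with $\varphi=e^{P_t(\lambda f)}$, the commutation $\nabla P_t h=e^{-t}P_t(\nabla h)$ combined with Jensen's inequality to get $\|\nabla P_t(\lambda f)\|_2\le e^{-t}\lambda\rho$ pointwise, and the integration of $\phi'$ over $[0,\infty)$ with $\phi(\infty)=0$ yield exactly $\log\mathbb{E}\,e^{\lambda(f(\mathbi{Z})-\mathbb{E}f(\mathbi{Z}))}\le\lambda^2\rho^2/2$, after which the choice $\lambda=\alpha/\rho^2$ gives \eqref{eq:conc} with the sharp constant; the mollification/truncation reductions are harmless since both operations preserve the Lipschitz constant and $\mathbb{E}|f(\mathbi{Z})|<\infty$ for Lipschitz $f$. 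The two routes buy different things: the isoperimetric proof gives the stronger statement about deviations from the \emph{median} (hence two-sided bounds without centering at the mean), while your functional proof is self-contained, avoids isoperimetry entirely, and generalizes to any reference measure satisfying a commutation estimate $|\nabla P_t h|\le e^{-ct}P_t|\nabla h|$ or a logarithmic Sobolev inequality. Your closing remark is also accurate: the rotation coupling of Pisier--Maurey (the reference \cite{Pisier86} the paper also cites) only yields the exponent $2\alpha^2/(\pi^2\rho^2)$; that weaker constant would in fact still suffice everywhere the paper invokes Proposition~\ref{Prop:Z}, since only superpolynomial decay of the tail is ever used, but your argument delivers the sharp form as stated.
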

{\em Proof of Prop.~\ref {Prop:Z}:} 
Take $f$ above to be $f(\mathbi{z})=\|\mathbi{z}\|_2$ for all $\mathbi{z}\in \mathbb{R}^s,$ 
and note that it is $1$-Lipschitz by the triangle inequality, so \eqref{eq:conc} holds true. We have
$$
P(\|\mathbi{Z}\|_2-\mathbb{E} \|\mathbi{Z}\|_2 \ge \alpha) \le e^{-\frac{\alpha^2}{2}} \text{~for all~} \alpha \ge 0.
$$
By Jensen's inequality,
$$
\mathbb{E} \|\mathbi{Z}\|_2 = \mathbb{E} \sqrt{Z_1^2+\dots+Z_s^2} \le \sqrt {\mathbb{E} (Z_1^2+\dots+Z_s^2)} =\sqrt{s}.
$$
Therefore,
$$
P(\|\mathbi{Z}\|_2- \sqrt{s} \ge \alpha) \le
P(\|\mathbi{Z}\|_2-\mathbb{E} \|\mathbi{Z}\|_2 \ge \alpha) \le e^{-\frac{\alpha^2}{2}} \text{~for all~} \alpha \ge 0.
$$
\qed


{\em Proof of Prop.~\ref{Prop:U}.}
Observe that $\sqrt{h(\tilde{\mathbi{u}})} \ge \sqrt{h(\mathbi{t})} = \sqrt{C}$ for all $\tilde{\mathbi{u}} \in \mathbb{R}^s,$
and so $E(\tilde{\mathbi{u}}, \alpha) \subseteq E(\mathbi{t}, \alpha).$ 
Define the following set:
$$
E(\alpha) = \{\mathbi{u} \in \mathbb{R}^s: \sqrt{(\mathbi{u}-\mathbi{t})^T \Phi  (\mathbi{u}-\mathbi{t})} >  \alpha \}.
$$
Since $\sqrt{h(\mathbi{u})} \le \sqrt{(\mathbi{u}-\mathbi{t})^T \Phi  (\mathbi{u}-\mathbi{t})} + \sqrt{C}$
for all $\mathbi{u} \in \mathbb{R}^s,$ we have $E(\mathbi{t}, \alpha) \subseteq E(\alpha).$ 
Let $\mathbi{U} \sim \cN(\mathbi{t}, \Phi^{-1})$ be an $s$-dimensional Gaussian random vector with mean vector $\mathbi{t}$ and covariance matrix $\Phi^{-1}.$ 
Let $\mathbi{Z}=\Phi^{1/2} (\mathbi{U}-\mathbi{t}).$ Then $\mathbi{Z}\sim \cN(\mathbi{0}, I_s)$. Indeed, the mean vector 
of $\mathbi{Z}$ is trivially zero, and the covariance matrix is found as 
$$
\mathbb{E} (\mathbi{Z} \mathbi{Z}^T) = \mathbb{E} (\Phi^{1/2} (\mathbi{U}-\mathbi{t})  (\mathbi{U}-\mathbi{t})^T \Phi^{1/2}) = \Phi^{1/2} \Phi^{-1} \Phi^{1/2} = I_s,
$$
respectively. We obtain, for all $\alpha\ge \sqrt s$,
\begin{align*}
\frac{\int_{E(\tilde{\mathbi{u}}, \alpha)} \exp(-\frac{1}{2}h(\mathbi{u})) d\mathbi{u}}{\int_{ \mathbb{R}^s} \exp(-\frac{1}{2}h(\mathbi{u})) d\mathbi{u}}
 &= \frac{\int_{E(\tilde{\mathbi{u}}, \alpha)} \exp(-\frac{1}{2} (\mathbi{u}-\mathbi{t})^T \Phi  (\mathbi{u}-\mathbi{t})) d\mathbi{u}}{\int_{\mathbb{R}^s} \exp(-\frac{1}{2} (\mathbi{u}-\mathbi{t})^T \Phi  (\mathbi{u}-\mathbi{t})) d\mathbi{u}} \\
 &=  P(\mathbi{U}\in E(\tilde{\mathbi{u}}, \alpha)) 
\le P(\mathbi{U}\in E(\alpha))\\
&=  P (\sqrt{(\mathbi{U}-\mathbi{t})^T \Phi  (\mathbi{U}-\mathbi{t})} >  \alpha) \\
&=  P (\sqrt{\mathbi{Z}^T \mathbi{Z}} > \alpha)\\
&\le  e^{-\frac{(\alpha-\sqrt{s})^2}{2}}.
\end{align*}
where the last step follows by \eqref{eq:Z2}. \qed

\section{Proof of Step \text{\rm (b)} in Eq.~\eqref{eq:bdE}}\label{ap:Gauss}

Let $\Lambda$ be an $s\times s$ positive definite matrix, and let $\mathbi{t}$ be a column vector in $\mathbb{R}^s.$
It is easily seen that for a Gaussian random variable $X\sim \cN(\mu, \sigma^2),$
$$
\mathbb{E}|X| \le |\mu| + \sqrt{\frac{2 \sigma^2}{\pi}} .
$$
Therefore, we have
\begin{align*}
\|\mathbi{U}\|_1 &\le \|\mathbi{t}\|_1 + \sqrt{\frac{2}{\pi}} \sum_{i=1}^s \sqrt{\Var(U_i)} \le \sqrt{s} \|\mathbi{t}\|_2 + \sqrt{\frac{2s}{\pi}} \Big(\sum_{i=1}^s \Var(U_i) \Big)^{1/2} \\
& = \sqrt{s} \|\mathbi{t}\|_2 + \sqrt{\frac{2s}{\pi} \tr(\Lambda)}
\end{align*}
which is what we used to obtain the last line in \eqref{eq:bdE}.

\section{Proof of \eqref{eq:EU}}\label{ap:f1}
To prove the last estimate in \eqref{eq:EU}, we will show that there exists an integer $N(k,\epsilon)$ such that for every $n\ge N(k,\epsilon)$ and every $y^n \in E_2$,
  \begin{equation}\label{eq:F}
 \int_{ (B_1)^c} \Big(\sum_{i=1}^k u_i^2 \Big)^{1/2} f_1(\mathbi{u},y^n) d\mathbi{u} 
<  \frac{1}{n^{7/13}}.
  \end{equation}

Given $y^n\in \cY^n,$ define $\bar{\mathbi{U}}(y^n)=(\bar{U}_1(y^n),\dots,\bar{U}_{k-1}(y^n)) $ as a $(k-1)$-dimensional Gaussian random vector with density function $f_{\bar{\mathbi{U}}(y^n)}(\cdot)=  f_1(\cdot,y^n)$, mean vector $\Phi^{-1}\mathbi{w}$ and covariance matrix $\Phi^{-1}.$
Note $\Phi^{-1}\mathbi{w}$ depends on $y^n$.
   According to \eqref{eq:hvcv}, \eqref{eq:g2}, \eqref{eq:H}, and \eqref{eq:f1},
   $$
f_1(\bar{\mathbi{u}},y^n)=
\frac{ \exp(-\frac{1}{2}h_{\mathbi{v}}(\bar{\mathbi{u}} )) }{\int_{ \mathbb{R}^{k-1}} \exp(-\frac{1}{2} h_{\mathbi{v}}(\mathbi{u})) d\mathbi{u}}.
  $$
With this, we can use the result of Prop.~\ref{Prop:U}, taking $s=k-1.$ Namely, for all $\alpha\ge \sqrt{k-1}, 
\tilde{\mathbi{u}}\in {\mathbb{R}}^{k-1},$ inequality \eqref{eq:II} gives the estimate
$$
P  \big( \bar{\mathbi{U}}(y^n) \in E_{\mathbi{v}}(\tilde{\mathbi{u}}, \alpha)  \big) =
\int_{ E_{\mathbi{v}}(\tilde{\mathbi{u}}, \alpha)} f_1(\bar{\mathbi{u}},y^n) d\bar{\mathbi{u}}
\le \exp\Big(-\frac{1}{2} (\alpha-\sqrt{k-1})^2 \Big) .$$
For a given $y^n\in E_2$, take $\tilde{\mathbi{u}}$ whose existence is established in Prop.~\ref{Prop:inc}. Using \eqref{eq:BEag}, we deduce that for every $y^n \in E_2$ and $ \alpha \ge n^{-5/13}$
   \begin{equation}\label{eq:PUB}
P  \big(\bar{\mathbi{U}}(y^n) \in (B(\alpha))^c  \big) \le
\exp\Big\{-\frac{1}{2} \Big( \delta_0 n^{1/2} (\alpha - n^{-5/13} ) + n^{1/26}-\sqrt{k-1} \Big)^2 \Big\} .
    \end{equation}
At this point we wish to ensure that $n^{1/26}-\sqrt{k-1}>0$. Since $n$ is taken to be sufficiently large, in particular,
$n\ge N(k,\epsilon)$, at it suffices to ensure that  $N(k,\epsilon)>k^{13},$ which entails no loss of generality.

Define a random variable
$$
\bar{Z}(y^n)= \Big(\sum_{i=1}^{k-1} \big(\bar{U}_i(y^n) \big)^2
+ \big( \sum_{i=1}^{k-1}\bar{U}_i(y^n) \big)^2\Big)^{1/2}.
$$
Observe that 
  $$
  \{\bar{\mathbi{U}}(y^n) \in (B(\alpha))^c\}=\{\bar{Z}(y^n) \ge \alpha\}$$
  and thus by \eqref{eq:PUB}, for every $y^n \in E_2$ and $ \alpha \ge n^{-5/13},$
  \begin{equation}\label{eq:ubz}
P  \Big(\bar{Z}(y^n) \ge \alpha \Big) \le
\exp\Big\{-\frac{1}{2} \Big( \delta_0 n^{1/2} (\alpha - n^{-5/13} ) + n^{1/26}-\sqrt{k-1} \Big)^2 \Big\} .
\end{equation}
Note that this implies that $\lim_{\bar{z}\to\infty} \bar{z} P(\bar{Z}(y^n) \ge \bar{z})=0.$
Then we have, for every $y^n \in E_2,$
\begin{align}
\int_{ (B_1)^c} \Big(\sum_{i=1}^k u_i^2 \Big)^{1/2} &f_1(\mathbi{u},y^n) d\mathbi{u}  =  
{\int_{\bar{\mathbi{u}} \in (B_1)^c} 
\Big(\sum_{i=1}^{k-1}\bar{u}_i^2
+\Big(\sum_{i=1}^{k-1}\bar{u}_i\Big)^2\Big)^{1/2}
f_{\bar{\mathbi{U}}(y^n)}( \bar{\mathbi{u}} ) d\bar{\mathbi{u}} }\nonumber \\
& = \int_{\bar{z} \ge n^{-5/13}} 
\bar{z}
f_{\bar{Z}(y^n) }( \bar{z} ) d\bar{z}
= \int_{\bar{z} \ge n^{-5/13}} 
\bar{z} dP(\bar{Z}(y^n) \le \bar{z}) \nonumber \\
& = - \int_{\bar{z} \ge n^{-5/13}} 
\bar{z} dP(\bar{Z}(y^n) \ge \bar{z}) \nonumber \\
& = - \Big(  \bar{z} P(\bar{Z}(y^n) \ge \bar{z}) \Big|_{n^{-5/13}}^{\infty} -  \int_{\bar{z} \ge n^{-5/13}} 
 P(\bar{Z}(y^n) \ge \bar{z}) d \bar{z}  \Big) \nonumber \\
& = n^{-5/13} P(\bar{Z}(y^n) \ge n^{-5/13}) + \int_{\bar{z} \ge n^{-5/13}} 
 P(\bar{Z}(y^n) \ge \bar{z}) d \bar{z}
- \lim_{\bar{z}\to\infty} \bar{z} P(\bar{Z}(y^n) \ge \bar{z}) \nonumber \\
&\le  n^{-5/13} \exp\Big(-\frac{1}{2} \Big(  n^{1/26}-\sqrt{k-1} \Big)^2 \Big)\nonumber \\
&\hspace*{.5in}+ \int_{\bar{z} \ge n^{-5/13}}  \exp\Big(-\frac{1}{2} \Big( \delta_0 n^{1/2} (\bar{z} - n^{-5/13} ) + n^{1/26}-\sqrt{k-1} \Big)^2 \Big) d \bar{z}, \label{eq:cmp}
\end{align}
where in the last two steps we used \eqref{eq:ubz}. 

To bound the second term on the last line of \eqref{eq:cmp}, we interpret it as an integral of a (univariate) Gaussian random variable $Z$ with 
mean $n^{-5/13} - \delta_0^{-1}n^{-6/13} + \delta_0^{-1}n^{-1/2}\sqrt{k-1}$ and variance $\delta_0^{-2}n^{-1}.$ Then
   \begin{align}
 \int_{\bar{z} \ge n^{-5/13}}  &\exp\Big(-\frac{1}{2} \Big( \delta_0 n^{1/2} (\bar{z} - n^{-5/13} ) + n^{1/26}-\sqrt{k-1} \Big)^2 \Big) d \bar{z} \nonumber\\
& = \frac{\sqrt{2\pi}}{\delta_0 \sqrt{n}}
\int_{z \ge n^{-5/13}}  \frac{\delta_0 \sqrt{n}}{\sqrt{2\pi}}  \exp\Big(-\frac{\delta_0^2 n}{2} \Big( z - \big( n^{-5/13} - \delta_0^{-1} n^{-6/13} + \delta_0^{-1} n^{-1/2}\sqrt{k-1} \big) \Big)^2 \Big) d z \nonumber\\
& = \frac{\sqrt{2\pi}}{\delta_0 \sqrt{n}} P(Z\ge n^{-5/13})
= \frac{\sqrt{2\pi}}{\delta_0 \sqrt{n}} P\Big( Z - \mathbb{E} Z \ge \delta_0^{-1}n^{-6/13} - \delta_0^{-1}n^{-1/2}\sqrt{k-1} \Big)\nonumber\\ & {\le}\frac{\sqrt{2\pi}}{\delta_0 \sqrt{n}} 
\exp \Big(-\frac{\delta_0^2 n}{2} \big( \delta_0^{-1}n^{-6/13} - \delta_0^{-1}n^{-1/2}\sqrt{k-1} \big)^2  \Big) \nonumber\\
& = \frac{\sqrt{2\pi}}{\delta_0 \sqrt{n}} 
\exp \Big(-\frac{ 1 }{2} \big( n^{1/26} - \sqrt{k-1} \big)^2  \Big),\label{eq:aux}
    \end{align}
where the inequality follows from the Gaussian tail bound (a one-sided version of \eqref{eq:Chernoff}).
It remains to use \eqref{eq:aux} in \eqref{eq:cmp}: We obtain that for every $y^n \in E_2,$
   \begin{align*}
 \int_{ (B_1)^c} \Big(\sum_{i=1}^k u_i^2 \Big)^{1/2} f_1(\mathbi{u},y^n) d\mathbi{u} 
   &\le  \Big(\frac{1}{n^{5/13}} + \frac{\sqrt{2\pi}}{\delta_0 \sqrt{n}} \Big)
\exp \Big(-\frac{ 1 }{2} \big( n^{1/26} - \sqrt{k-1} \big)^2  \Big) \\
<    \frac{1}{n^{7/13}},
    \end{align*}
where the last inequality holds for all $n\ge N(k,\epsilon)$ as long as $N(k,\epsilon)$ is large enough.

\section{Bounding the integral in \eqref{eq:Cov}}\label{ap:vr}
Let 
$$
I:=\int_{ (B_1)^c} \sum_{i=1}^k \Big(u_i- \mathbb{E}\big( \bar{U}_i(y^n) \big) \Big)^2 f_1(\mathbi{u},y^n) d\mathbi{u}
$$
Here we prove the following bound:

{\em There exists an integer $N(k,\epsilon)$ such that
for every $n\ge N(k,\epsilon)$ and every $y^n \in E_2$,
   \begin{equation}\label{eq:I}
I  
\le   \frac{1}{n^{14/13}}.
   \end{equation}
}

Using $\bar U_k=-\sum_{i=1}^{k-1} \bar U_{i},$ we have
\begin{align*}
I = \int_{\bar{\mathbi{u}} \in (B_1)^c} \sum_{i=1}^{k-1} (\bar{u}_i &- \mathbb{E}( \bar{U}_i(y^n) ) )^2 f_1(\bar{\mathbi{u}},y^n) d\bar{\mathbi{u}}\\
&+  \int_{\bar{\mathbi{u}} \in (B_1)^c}  \Big(\sum_{i=1}^{k-1}  (\bar{u}_i- \mathbb{E}( \bar{U}_i(y^n) )) \Big)^2 f_1(\bar{\mathbi{u}},y^n) d\bar{\mathbi{u}}
\end{align*}
Since $(\sum_{i=1}^m x_i)^2\le m\sum_{i=1}^m x_i^2,$ we obtain
  $$
  I\le k \int_{\bar{\mathbi{u}} \in (B_1)^c} \sum_{i=1}^{k-1} \Big(\bar{u}_i- \mathbb{E}( \bar{U}_i(y^n) ) \Big)^2 f_1(\bar{\mathbi{u}},y^n) d\bar{\mathbi{u}}
  $$ 
Since $\mathbb{E}(\bar{\mathbi{U}}(y^n))=\Phi^{-1}\mathbi{w},$ we can use inequality \eqref{eq:Pn} which says that for every $\mathbi{z}\in {\mathbb R}^{k-1},$ $\mathbi{z}^T\Phi \mathbi{z}\ge n\delta_0^2\|\mathbi{z}\|^2.$ Taking $\mathbi{z}=(\bar{\mathbi{u}} - \Phi^{-1}\mathbi{w})$ we can continue as follows:
  \begin{align}
I&\le   k
\int_{\bar{\mathbi{u}} \in (B_1)^c} (\bar{\mathbi{u}} - \Phi^{-1}\mathbi{w})^T  
(\bar{\mathbi{u}} - \Phi^{-1}\mathbi{w}) f_1(\bar{\mathbi{u}},y^n) d\bar{\mathbi{u}} \nonumber\\
&\le \frac{k}{n \delta_0^2}
\int_{\bar{\mathbi{u}} \in (B_1)^c} (\bar{\mathbi{u}} - \Phi^{-1}\mathbi{w})^T  \Phi
(\bar{\mathbi{u}} - \Phi^{-1}\mathbi{w}) f_1(\bar{\mathbi{u}},y^n) d\bar{\mathbi{u}}, \label{eq:100}
\end{align}

To bound the integral in \eqref{eq:100}, for a given $\alpha \ge 0,$ we define another set
$$
E_{\mathbi{v}}(\alpha) = \{\mathbi{u} \in \mathbb{R}^{k-1}: (\mathbi{u}- \Phi^{-1}\mathbi{w} )^T \Phi  (\mathbi{u}- \Phi^{-1}\mathbi{w} ) >  \alpha \}.
$$
Now turning to the remark after \eqref{eq:hv}, we note that $h_{\mathbi{v}}(\Phi^{-1}\mathbi{w})=C\ge0.$ Thus for 
every $\tilde{\mathbi{u}} \in \mathbb{R}^{k-1}$ we obtain
\begin{align*}
(\mathbi{u}- \Phi^{-1}\mathbi{w} )^T \Phi  (\mathbi{u}- \Phi^{-1}\mathbi{w} ) 
& = h_{\mathbi{v}}(\mathbi{u}) - h_{\mathbi{v}}(\Phi^{-1}\mathbi{w}) \\
& \ge h_{\mathbi{v}}(\mathbi{u}) - h_{\mathbi{v}}(\tilde{\mathbi{u}}) 
\ge \big( \sqrt{h_{\mathbi{v}}(\mathbi{u}) } - \sqrt{ h_{\mathbi{v}}(\tilde{\mathbi{u}}) }  \big)^2.
\end{align*}
As a result, for every $\tilde{\mathbi{u}} \in \mathbb{R}^{k-1}$,
$$
E_{\mathbi{v}}(\tilde{\mathbi{u}}, \alpha) \subseteq E_{\mathbi{v}}(\alpha^2) 
, \quad\alpha \ge 0.
$$
According to Prop.~\ref{Prop:inc}, for every $y^n \in E_2,$ there exists $\tilde{\mathbi{u}} \in B_2$ such that
$(B_1)^c \subseteq E_{\mathbi{v}}(\tilde{\mathbi{u}}, n^{1/26})$.
Therefore  for all $y^n \in E_2,$ $(B_1)^c \subseteq E_{\mathbi{v}}( n^{1/13 })$.
Consequently, for every $y^n \in E_2,$
      \begin{align}
 \int_{(B_1)^c} &(\bar{\mathbi{u}} - \Phi^{-1}\mathbi{w})^T  \Phi
(\bar{\mathbi{u}} - \Phi^{-1}\mathbi{w}) f_1(\bar{\mathbi{u}},y^n) d\bar{\mathbi{u}} \nonumber\\
&\le  \int_{E_{\mathbi{v}}( n^{1/13 }) } (\bar{\mathbi{u}} - \Phi^{-1}\mathbi{w})^T  \Phi
(\bar{\mathbi{u}} - \Phi^{-1}\mathbi{w}) f_1(\bar{\mathbi{u}},y^n) d\bar{\mathbi{u}}  \nonumber\\
&\overset{(a)}{=}   \int_{E_{\mathbi{v}}( n^{1/13 }) } (\bar{\mathbi{u}} - \Phi^{-1}\mathbi{w})^T  \Phi
(\bar{\mathbi{u}} - \Phi^{-1}\mathbi{w})  \sqrt{\frac{{|\Phi|}}{{(2\pi)^{k-1}}}}
 \exp\Big( - \frac{1}{2} (\bar{\mathbi{u}} - \Phi^{-1}\mathbi{w})^T \Phi (\bar{\mathbi{u}} - \Phi^{-1}\mathbi{w})  \Big) d\bar{\mathbi{u}} \nonumber \\
&\overset{(b)}{=}  \frac{1}{\sqrt{(2\pi)^{k-1}}}\int_{\mathbi{z}: \;\mathbi{z}^T \mathbi{z} > n^{1/13 } }  \mathbi{z}^T \mathbi{z} 
 \exp\Big( - \frac{1}{2} \mathbi{z}^T \mathbi{z} \Big) d\mathbi{z},\label{eq:p2}
       \end{align}
where $(a)$ follows from \eqref{eq:expf1}; $(b)$ follows from the change of variable 
$\mathbi{z} = \Phi^{1/2}(\bar{\mathbi{u}} - \Phi^{-1}\mathbi{w})$ and the fact that 
$|\Phi^{1/2}| = |\Phi|^{1/2}$.
Let $\mathbi{Z} \sim \cN(\mathbi{0}, I_{k-1})$ be a standard Gaussian random vector. 
   We proceed similarly to \eqref{eq:cmp}:
\begin{align}
 \frac{1}{\sqrt{(2\pi)^{k-1}}}&\int_{\{\mathbi{z}: \;\mathbi{z}^T \mathbi{z} > n^{1/13 } \} }  \mathbi{z}^T \mathbi{z} 
   \exp\Big( - \frac{1}{2} \mathbi{z}^T \mathbi{z} \Big) d\mathbi{z} \nonumber\\
    &= \int_{a = n^{1/13 }}^\infty a d P( \mathbi{Z}^T\mathbi{Z} \le a) \nonumber\\
& = - \int_{n^{1/13 }}^\infty a d P( \mathbi{Z}^T\mathbi{Z} > a)
= - \Big(  a P( \mathbi{Z}^T\mathbi{Z} > a) \Big|_{n^{1/13 }}^{\infty} 
- \int_{n^{1/13 }}^\infty P( \mathbi{Z}^T\mathbi{Z} > a) da \Big) \nonumber\\
& = n^{1/13 } P( \mathbi{Z}^T\mathbi{Z} > n^{1/13 })
+ \int_{n^{1/13 }}^\infty P( \mathbi{Z}^T\mathbi{Z} > a) da - \lim_{a\to\infty}a P( \mathbi{Z}^T\mathbi{Z} > a) \nonumber\\
&\overset{(a)}{\le}  n^{1/13 } \exp\Big(-\frac{1}{2}(n^{1/ 26 } - \sqrt{k-1})^2 \Big)
+  \int_{n^{1/13 }}^\infty  \exp\Big(-\frac{1}{2}( \sqrt{a} - \sqrt{k-1})^2 \Big) da \nonumber\\
&\overset{(b)}{\le}  n^{1/13 } \exp\Big(-\frac{1}{2}(n^{1/ 26 } - \sqrt{k-1})^2 \Big)
+ \int_{n^{1/13 }}^\infty  \exp\Big(-\frac{a}{8} \Big) da \nonumber\\
& = n^{1/13 } \exp\Big(-\frac{1}{2}(n^{1/ 26 } - \sqrt{k-1})^2 \Big)
+ 8 \exp \Big(-\frac{n^{1/13 }}{8} \Big), \label{eq:p3}
\end{align}
where in $(a)$ we used Prop.~\ref{Prop:Z} and the fact that $\lim_{a\to\infty}a P( \mathbi{Z}^T\mathbi{Z} > a)=0$, and in $(b)$ we take a sufficiently large $n$, greater than a suitably chosen value $N(k,\epsilon).$

Finally, using \eqref{eq:p2} and \eqref{eq:p3} in \eqref{eq:100}, we conclude that for every $y^n \in E_2,$
\begin{align*}
I\le &  \frac{k}{n^{12/13} \delta_0^2} \exp\Big(-\frac{1}{2}(n^{1/ 26 } - \sqrt{k-1})^2 \Big)
+ \frac{8k}{n \delta_0^2} \exp \Big(-\frac{n^{1/13 }}{8} \Big) \\
< & \frac{1}{n^{14/13}},
\end{align*}
and this implies \eqref{eq:I} for all $n>N(k,\epsilon)$ as long as we take a sufficiently large $N(k,\epsilon)$.

\section{Proof of Proposition \ref{Prop:bdtr}}\label{ap:bdtr}
In this section we prove a somewhat stronger result which immediately implies Prop.~\ref{Prop:bdtr}.
\begin{proposition}\label{Prop:Zac}
Let $A=(a_{ij})$ be a $(k-1)\times (k-1)$ positive definite matrix and let $B=A^{-1}.$
Then
\begin{equation}\label{eq:obj}
( \tr(B) + \mathbi{1}^T B \mathbi{1} ) 
\Big( \frac{\sum_{i=1}^{k-1}a_{ii}}{k}-\frac{\sum_{i\neq j}a_{ij} }{k(k-1)} \Big) \ge k-1.
\end{equation}
The equality holds if and only if 
\begin{equation}\label{eq:eqcon}
a_{ij}=\begin{cases} 2a &\text{if } i=j\\
a&\text{otherwise}.
\end{cases}
\end{equation} 
for some $a>0$.
\end{proposition}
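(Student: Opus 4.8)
To prove Proposition~\ref{Prop:Zac}, the plan is to rewrite both factors on the left of \eqref{eq:obj} in terms of the single matrix $M:=I+J$, where $J=\mathbi{1}\mathbi{1}^T$ denotes the $(k-1)\times(k-1)$ all-ones matrix, and then to reduce the claim to the classical scalar inequality $\tr(P)\tr(P^{-1})\ge s^2$ for an $s\times s$ positive definite matrix $P$. First, since $\mathbi{1}^TB\mathbi{1}=\tr(BJ)$, the first factor equals $\tr(B)+\mathbi{1}^TB\mathbi{1}=\tr\big(B(I+J)\big)=\tr(BM)$. For the second factor, putting the two terms over the common denominator $k(k-1)$ and using $\sum_{i\neq j}a_{ij}=\mathbi{1}^TA\mathbi{1}-\tr A$, one gets
\[
\frac{\sum_{i=1}^{k-1}a_{ii}}{k}-\frac{\sum_{i\neq j}a_{ij}}{k(k-1)}
=\frac{k\tr A-\mathbi{1}^TA\mathbi{1}}{k(k-1)}
=\frac{\tr\big(A(kI-J)\big)}{k(k-1)}.
\]
The key algebraic observation is that $(I+J)(kI-J)=kI$ (because $J^2=(k-1)J$), so $M^{-1}=(I+J)^{-1}=\tfrac1k(kI-J)$, whence the second factor equals $\tfrac1{k-1}\tr(AM^{-1})$. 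Therefore \eqref{eq:obj} is equivalent to the statement
\[
\tr(BM)\,\tr(AM^{-1})\ge(k-1)^2,\qquad B=A^{-1}.
\]

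Next, I would exploit that $M$ is symmetric positive definite, hence admits a symmetric positive definite square root $M^{1/2}$. Set $P:=M^{1/2}A^{-1}M^{1/2}\succ0$. By the cyclic invariance of the trace, $\tr(A^{-1}M)=\tr(P)$, and $\tr(AM^{-1})=\tr\big(M^{-1/2}AM^{-1/2}\big)=\tr(P^{-1})$, where one checks directly that $\big(M^{1/2}A^{-1}M^{1/2}\big)\big(M^{-1/2}AM^{-1/2}\big)=I$, so $P^{-1}=M^{-1/2}AM^{-1/2}$. Thus it remains to establish $\tr(P)\tr(P^{-1})\ge(k-1)^2$ for any $(k-1)\times(k-1)$ positive definite $P$. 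Writing $\lambda_1,\dots,\lambda_{k-1}>0$ for the eigenvalues of $P$, the Cauchy--Schwarz inequality gives
\[
\Big(\sum_{i=1}^{k-1}\lambda_i\Big)\Big(\sum_{i=1}^{k-1}\lambda_i^{-1}\Big)
\ge\Big(\sum_{i=1}^{k-1}\sqrt{\lambda_i}\cdot\sqrt{\lambda_i^{-1}}\Big)^2=(k-1)^2,
\]
with equality if and only if all $\lambda_i$ coincide, i.e. $P=cI$ for some $c>0$.

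For the equality case, $P=cI$ unwinds to $M^{1/2}A^{-1}M^{1/2}=cI$, i.e. $A=c^{-1}M=c^{-1}(I+J)$; setting $a:=c^{-1}>0$ and noting that $I+J$ has diagonal entries $2$ and off-diagonal entries $1$, this is exactly condition \eqref{eq:eqcon}, and conversely any $A$ of that form makes $P=a^{-1}I$ and gives equality. Since Prop.~\ref{Prop:bdtr} is the special case $A=\Phi$, this also proves Prop.~\ref{Prop:bdtr}. I do not expect any genuine obstacle in this argument: the one nontrivial step is recognizing that the two awkward-looking quadratic-form factors are, up to the constant $k-1$, precisely $\tr\big(A^{-1}(I+J)\big)$ and $\tr\big(A(I+J)^{-1}\big)$; once that is seen, the simultaneous-diagonalization substitution $P=M^{1/2}A^{-1}M^{1/2}$ and Cauchy--Schwarz finish everything, including the characterization of equality.
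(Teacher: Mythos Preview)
Your argument is correct and is a genuinely different, more streamlined route than the paper's proof. The paper diagonalizes $A$ in its own eigenbasis, writes $\mathbi{1}=\sum_i\alpha_i v_i$, and expresses the two factors as $\sum_i(1+\alpha_i^2)/\lambda_i$ and $\frac{1}{k(k-1)}\sum_i\lambda_i(k-\alpha_i^2)$; it then needs two separate inequalities---Cauchy--Schwarz followed by the pointwise bound $(1+x)(k-x)\ge k$ for $x\in[0,k-1]$---and must track equality through both of them to recover \eqref{eq:eqcon}. Your key observation that the two factors are exactly $\tr(A^{-1}M)$ and $\tfrac{1}{k-1}\tr(AM^{-1})$ with $M=I+J$ lets you absorb the problem into the single substitution $P=M^{1/2}A^{-1}M^{1/2}$, after which one application of Cauchy--Schwarz on the eigenvalues of $P$ gives both the inequality and the equality case $A=a(I+J)$ in one stroke. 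What you gain is conceptual clarity and a one-line equality characterization; what the paper's approach arguably buys is that it never needs to invoke or invert $I+J$ explicitly, working instead only with the spectral data of $A$ and the coefficients $\alpha_i$ of $\mathbi{1}$.
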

\begin{proof}
Let $\lambda_1,\dots,\lambda_{k-1}$ be the eigenvalues of $A$ and let $v_1,v_2,\dots,v_{k-1}$ be the corresponding eigenvectors. 
Since $A\succ 0$, the vectors $v_1,\dots,v_{k-1}$ form an orthogonal basis for $\mathbb{R}^{k-1}.$
Let us expand the all-ones vector in $\mathbb{R}^{k-1}$ in this basis:
   $$
\mathbi{1}=\sum_{i=1}^{k-1} \alpha_i v_i.
   $$
It is clear that
\begin{equation}\label{eq:alp}
\sum_{i=1}^{k-1} \alpha_i^2 = k-1
\end{equation}
and the coefficients $\alpha_i$ satisfy $0\le \alpha_i^2\le k-1, i=1,\dots, k-1.$
 Furthermore, $A=\sum_{i=1}^{k-1}\lambda_i v_i v_i^T$ and $B=\sum_{i=1}^{k-1}\frac{1}{\lambda_i} v_i v_i^T.$
We obtain
   $$
   \tr(B) + \mathbi{1}^T B \mathbi{1}=\sum_{i=1}^{k-1} \frac{1}{\lambda_i} + \sum_{i=1}^{k-1} \frac{\alpha_i^2}{\lambda_i}
   $$
and since
  $$
   \frac{\sum_{i=1}^{k-1}a_{ii}}{k}-\frac{\sum_{i\neq j}a_{ij} }{k(k-1)} =\frac{\tr(A)}{k-1}-\frac{\mathbi{1}^T A \mathbi{1} }{k(k-1)}
=\frac{1}{k(k-1)}\Big(k\sum_{i=1}^{k-1} \lambda_i - \sum_{i=1}^{k-1} \lambda_i\alpha_i^2 \Big),
  $$
the left-hand side of \eqref{eq:obj} can be written as
\begin{align}
 \Big( \sum_{i=1}^{k-1} \frac{1}{\lambda_i} + \sum_{i=1}^{k-1} \frac{\alpha_i^2}{\lambda_i} \Big)
\Big( \frac{\sum_{i=1}^{k-1}\lambda_i}{k-1}&-\frac{\sum_{i=1}^{k-1} \lambda_i\alpha_i^2 }{k(k-1)} \Big)
  = \frac{1}{k(k-1)} \Big( \sum_{i=1}^{k-1} \frac{1 + \alpha_i^2}{\lambda_i}  \Big)
\Big( \sum_{i=1}^{k-1}\lambda_i (k-\alpha_i^2) \Big) \nonumber \\
&\ge  \frac{1}{k(k-1)} \Big( \sum_{i=1}^{k-1} \sqrt{(1 + \alpha_i^2)(k-\alpha_i^2)} \Big)^2 \label{eq:inea} \\
&\ge  \frac{1}{k(k-1)} \Big( \sum_{i=1}^{k-1} \sqrt{k} \Big)^2 = k-1,  \label{eq:ineb}
\end{align}
where \eqref{eq:inea} follows from the Cauchy-Schwarz inequality and \eqref{eq:ineb} follows from the fact that $(1+x)(k-x)\ge k$ for all $0\le x\le k-1.$ 
This completes the proof of \eqref{eq:obj}.

It is easy to verify that the equality in \eqref{eq:obj} holds if \eqref{eq:eqcon} holds. Let us prove the ``only if" part. Note that equality in \eqref{eq:ineb}
holds only if $\alpha_i^2 = 0$ or $k-1$ for all $i=1,2,\dots,k-1$.
By \eqref{eq:alp} this means that one of the $\alpha_i$'s is $\sqrt{k-1}$, and all the other $\alpha_i$'s are $0$.
Without loss of generality, suppose that $\alpha_1=\sqrt{k-1}$ and $\alpha_i=0$ for $2\le i\le k-1,$ and thus
    \begin{equation}\label{eq:1}
v_1=\frac{1}{\sqrt{k-1}} \mathbi{1}.
    \end{equation}
 Moreover, if \eqref{eq:inea} holds with equality, then
$$
\frac{\lambda_1^2 (k-\alpha_1^2)}{1+ \alpha_1^2}
= \frac{\lambda_2^2 (k-\alpha_2^2)}{1+ \alpha_2^2}
= \dots =
\frac{\lambda_{k-1}^2 (k-\alpha_{k-1}^2)}{1+ \alpha_{k-1}^2}
$$
and thus
$$
\lambda_2=\lambda_3=\dots=\lambda_{k-1} = \frac{\lambda_1}{k}.
$$
Let $\tilde J$ be a $(k-1) \times (k-1)$ matrix given by
$$
\tilde J=\frac{\lambda_1}{k}
\left[ \begin{array}{cccc}
1 & 1 & \dots & 1 \\
1 & 1 & \dots & 1 \\
\vdots & \vdots & \vdots & \vdots \\
1 & 1 & \dots & 1 
\end{array} \right]
$$
By \eqref{eq:1}, the basis vectors $v_i, i=2,3,\dots,k-1$ are orthogonal to $\mathbi{1}.$ This implies that 
   $$
   (A-\tilde J)v_i=\frac{\lambda_1}{k} v_i, \quad i=2,3,\dots,k-1.
   $$
On the other hand, $\tilde J v_1=\frac{(k-1)\lambda_1}{k} v_1$. Therefore,
$(A-\tilde J) v_1=\frac{\lambda_1}{k} v_1$.
As a result, $(A-\tilde J)v_i=\frac{\lambda_1}{k} v_i$ for all
$i=1,2,\dots,k-1$. Since $v_1,\dots,v_{k-1}$ span $\mathbb{R}^{k-1}$, we deduce that 
$A-\tilde J=\frac{\lambda_1}{k} I_{k-1}$, where $I_{k-1}$ is the identity matrix.
This implies that
  $$
 a_{ij}=  \begin{cases}
2\frac{\lambda_1}{k} &\text{if } i=1,2,\dots,k-1, \\
\frac{\lambda_1}{k} &\text{otherwise.}
  \end{cases}
  $$
Since $\lambda_1>0$, this concludes the proof.
\end{proof}

\section{Proof of \eqref{eq:rhp2}} \label{app:rhp1}
{\em Let $d^\ast$ be as defined in \eqref{eq:dstar}. To prove \eqref{eq:rhp2}, we only need to prove the following inequality:
  \begin{equation}\label{eq:L}
\sum_{j=1}^{k} \Big( \frac{q_{ij}}{q_i} \Big)^2 
\le k \Big( 1 +   (e^{\epsilon} - 1)^2  
 \frac{d^\ast (k-d^\ast)} {(d^\ast e^{\epsilon} + k - d^\ast)^2}  \Big), \;\; i=1,2,\dots, L.
  \end{equation}
}

\begin{proof}
Since $\mathbi{Q}\in \cD_{\epsilon,E}$, by definition we have 
$$
\frac{q_{ij}}{\min_{j'\in[k]}q_{ij'}} \in \{1, e^{\epsilon}\}
$$
for all $i\in[L]$. Given $i\in[L]$, define
$$
d_i = |\{j:\frac{q_{ij}}{\min_{j'\in[k]}q_{ij'}} = e^{\epsilon} \}|.
$$
Then
$$
|\{j: q_{ij} = \min_{j'\in[k]}q_{ij'}  \}| = k-d_i.
$$
Therefore, 
   \begin{align*}
\sum_{j=1}^{k} \big( \frac{q_{ij}}{q_i} \big)^2 
 & = d_i \big( \frac{k e^{\epsilon}}{d_i e^{\epsilon} + k - d_i} \big)^2
 + (k-d_i) \big( \frac{k }{d_i e^{\epsilon} + k - d_i} \big)^2  \\
 & = k^2 \frac{d_i e^{2\epsilon} + k - d_i}{(d_i e^{\epsilon} + k - d_i)^2} \\
 & =  k \frac{(d_i e^{2\epsilon} + k - d_i) (d_i+k-d_i) }{(d_i e^{\epsilon} + k - d_i)^2} \\
 & = k \frac{ d_i^2 e^{2\epsilon} + d_i(k-d_i)(e^{2\epsilon}+1) + (k-d_i)^2 }
{ d_i^2 e^{2\epsilon} + 2d_i(k-d_i)e^{\epsilon} + (k-d_i)^2 }  \\
 & = k \Big( 1 + \frac{  d_i(k-d_i)(e^{\epsilon} - 1)^2  }
{ d_i^2 e^{2\epsilon} + 2d_i(k-d_i)e^{\epsilon} + (k-d_i)^2 }  \Big) \\
 & = k \Big( 1 +   (e^{\epsilon} - 1)^2  
 \Big( 2e^{\epsilon} + \frac{d_i}{k-d_i} e^{2\epsilon}  + \frac{k-d_i}{d_i} \Big)^{-1}  \Big) \\
 & \overset{(a)}{\le} k \Big( 1 +   (e^{\epsilon} - 1)^2  
 \Big( 2e^{\epsilon} + \frac{d^\ast}{k-d^\ast} e^{2\epsilon}  + \frac{k-d^\ast}{d^\ast} \Big)^{-1}  \Big) \\
 & = k \Big( 1 +   (e^{\epsilon} - 1)^2  
 \frac{d^\ast (k-d^\ast)} {(d^\ast e^{\epsilon} + k - d^\ast)^2}  \Big)  ,
    \end{align*}
where $(a)$ follows directly from the definition of $d^\ast$ in \eqref{eq:dstar}.
\end{proof}

\section{Relation to Local Asymptotic Normality and Fisher information matrix}\label{ap:LAN}
Let 
$\mathbi{p}=(p_1,p_2,\dots,p_{k-1},1-\sum_{i=1}^{k-1}p_i)$ be a distribution (probability mass function) on $\cX$. 
Denote by $P(y^n;p_1,p_2,\dots,p_{k-1})$ the probability mass function of a random vector $Y^n$ formed of i.i.d. samples $Y^{(i)}$ 
drawn according to the distribution $\mathbi{p}\mathbi{Q}$. 
Recall that in the beginning of Section~\ref{Sect:Main}, we assumed that the output alphabet of $\mathbi{Q}$ is 
{$\cY=\{1,2,\dots,L'\}$}, and we defined $t'_i(y^n)$ to be the number of times that symbol $i$ appears in $y^n$. Therefore,
\begin{align}
 \log\, &P(y^n;p_1,p_2,\dots,p_{k-1}) 
 = \sum_{i=1}^{L'} t_i'(y^n) \log 
\Big(\mathbi{Q}(i|k)+\sum_{j=1}^{k-1}p_j\big(\mathbi{Q}(i|j)-\mathbi{Q}(i|k) \big) \Big)\nonumber \\
& = \sum_{i=1}^{L'} t_i'(y^n) \log q_i'  +
\sum_{i=1}^{L'} t_i'(y^n) \log 
\Big(\frac{\mathbi{Q}(i|k)}{q_i'}+
\sum_{j=1}^{k-1}p_j \Big(\frac{\mathbi{Q}(i|j)}{q_i'}-\frac{\mathbi{Q}(i|k)}{q_i'} \Big) \Big)\nonumber \\
& = \log P(y^n;1/k,1/k,\dots,1/k) +
\sum_{i=1}^{L} \sum_{a\in A_i} t_a'(y^n) \log 
\Big(\frac{\mathbi{Q}(a|k)}{q_a'}+
\sum_{j=1}^{k-1}p_j \Big(\frac{\mathbi{Q}(a|j)}{q_a'}-\frac{\mathbi{Q}(a|k)}{q_a'} \Big) \Big) \nonumber\\
& = \log P(y^n;1/k,1/k,\dots,1/k) +
\sum_{i=1}^{L}  t_i(y^n) \log 
\Big(\frac{q_{ik}}{q_i}+
\sum_{j=1}^{k-1}p_j \Big(\frac{q_{ij}}{q_i}-\frac{q_{ik}}{q_i} \Big) \Big), \label{eq:pt}
\end{align}
where the last equality follows from \eqref{eq:tq} and \eqref{eq:eqclass}.
Therefore, the function $g(\mathbi{u},y^n)$ defined in \eqref{eq:defg} can be written as
\begin{equation}\label{eq:guy}
 g(\mathbi{u},y^n) = \log \frac{P(y^n;1/k+u_1,1/k+u_2,\dots,1/k+u_{k-1})}{P(y^n;1/k,1/k,\dots,1/k)}.
\end{equation}

In Section~\ref{Sect:Main}, one of the main steps in the proof of \eqref{eq:indv} is to approximate
 $g(\mathbi{u},y^n)$ as $g_2(\mathbi{u},y^n)$.
According to \eqref{eq:defg2} and \eqref{eq:defPhi},
$$
g_2(\mathbi{u},y^n) =
\sum_{i=1}^L  v_i \sum_{j=1}^k \frac{u_j q_{ij}}{q_i}
-\frac{1}{2} \sum_{i=1}^L  n q_i  \Big(\sum_{j=1}^k \frac{u_j q_{ij}}{q_i} \Big)^2 =
\sum_{i=1}^L  v_i \sum_{j=1}^k \frac{u_j q_{ij}}{q_i}
- \frac{1}{2} \mathbi{u}^T \Phi(n,\mathbi{Q}) \mathbi{u}.
$$
The observable random variables in our problem are the coordinates of $Y^n$, and the unknown parameters are
$p_1,p_2,\dots,p_{k-1}.$ Denote by $I(p_1,p_2,\dots,p_{k-1})$ the Fisher information matrix of $\mathbi{p}$ vs. $Y^n$, where
   \begin{equation}\label{eq:Fim}
   [I(1/k,1/k,\dots,1/k)]_{ij} 
= - \mathbb{E}\Big[ \frac{\partial^2}{\partial p_i \partial p_j} \log P(Y^n;p_1,p_2,\dots,p_{k-1}) 
 \Big]_{p_1=p_2=\dots=p_{k-1}=1/k}.
 \end{equation}
We claim that
$$
\Phi(n,\mathbi{Q}) = I(1/k,1/k,\dots,1/k).
$$
Indeed, by \eqref{eq:Fim} we have
\begin{align*}
 [I(1/k,&1/k,\dots,1/k)]_{ij} \\
&= - \mathbb{E}\Big[ \frac{\partial^2}{\partial p_i \partial p_j} 
\Big( \sum_{m=1}^{L}  t_m(y^n) \log 
\Big(\frac{q_{mk}}{q_m}+
\sum_{j=1}^{k-1}p_j \Big(\frac{q_{mj}}{q_m}-\frac{q_{mk}}{q_m} \Big) \Big) \Big) 
 \Big]_{p_1=p_2=\dots=p_{k-1}=1/k} \\
& = \mathbb{E} \Big[ \sum_{m=1}^L t_m(Y^n) 
\frac{(q_{mi}-q_{mk})(q_{mj}-q_{mk})}{q_m^2} \Big]_{p_1=p_2=\dots=p_{k-1}=1/k} \\
& = n \sum_{m=1}^L  
\frac{(q_{mi}-q_{mk})(q_{mj}-q_{mk})}{q_m},
\end{align*}
where the first step follows from \eqref{eq:pt}.
Comparing this with \eqref{eq:altPhi}, we can easily see that $\Phi(n,\mathbi{Q}) = I(1/k,1/k,\dots,1/k)$.
Furthermore, it is also easy to check that
$$
\sum_{i=1}^L  v_i \sum_{j=1}^k \frac{u_j q_{ij}}{q_i}
= \sum_{i=1}^{k-1} u_i \Big( \frac{\partial}{\partial p_i}
\log P(y^n;p_1,p_2,\dots,p_{k-1}) \Big|_{p_1=p_2=\dots=p_{k-1}=1/k} \Big).
$$
Therefore,
$$
g_2(\mathbi{u},y^n) = \sum_{i=1}^{k-1} u_i \Big( \frac{\partial}{\partial p_i}
\log P(y^n;p_1,p_2,\dots,p_{k-1}) \Big|_{p_1=p_2=\dots=p_{k-1}=1/k} \Big) -
 \frac{1}{2} \mathbi{u}^T I(1/k,1/k,\dots,1/k) \mathbi{u}.
$$
Combining this with \eqref{eq:guy}, we conclude that $g(\mathbi{u},y^n) \approx g_2(\mathbi{u},y^n)$
for small $\mathbi{u}$ can be written as
\begin{align*}
& \log \frac{P(y^n;1/k+u_1,1/k+u_2,\dots,1/k+u_{k-1})}{P(y^n;1/k,1/k,\dots,1/k)} \\
\approx & \sum_{i=1}^{k-1} u_i \Big( \frac{\partial}{\partial p_i}
\log P(y^n;p_1,p_2,\dots,p_{k-1}) \Big|_{p_1=p_2=\dots=p_{k-1}=1/k} \Big) -
 \frac{1}{2} \mathbi{u}^T I(1/k,1/k,\dots,1/k) \mathbi{u}.
\end{align*}
This relation is a classic form of expressing Le Cam's result on local asymptotic normality (see \cite[Chapter 2, Theorem 1.1]{Ibrag81}).
As we already remarked in Sec.~\ref{Sect:ovr}, we could use this result to give a simpler proof that a relation of the form \eqref{eq:oblb} holds for an individual privatization scheme $\mathbi Q$; however along this route there seems to be no immediate way to establish the uniform bound
\eqref{eq:oblb}.

\bibliographystyle{IEEEtran}
\bibliography{differential}

\end{document}